\def\BibTeX{{\rm B\kern-.05em{\sc i\kern-.025em b}\kern-.08em
T\kern-.1667em\lower.7ex\hbox{E}\kern-.125emX}}
\theoremstyle{plain}
\newtheorem{lemma}{Lemma}
\newtheorem{corollary}{Corollary}
\theoremstyle{definition}
\def\({\left(}
\def\){\right)}
\def\[{\left[}
\def\]{\right]}
\def\abf{{\bf a}}
\def\bbf{{\bf b}}
\def\dbf{{\bf d}}
\def\Fbf{{\bf F}}  
\def\Gbf{{\bf G}}  
\def\Jbf{{\bf J}}  
\def\Kbf{{\bf K}}  
\def\Mbf{{\bf M}}  
\def\Nbf{{\bf N}}  
\def\Pbf{{\bf P}}  
\def\Qbf{{\bf Q}}  
\def\Rbf{{\bf R}}  
\def\Sbf{{\bf S}}  
\def\Tbf{{\bf T}}  
\def\Ubf{{\bf U}}  \def\ubf{{\bf u}}
\def\Vbf{{\bf V}}  
\def\Wbf{{\bf W}}  \def\wbf{{\bf w}}
\def\xbf{{\bf x}}
\def\Ybf{{\bf Y}}  \def\ybf{{\bf y}}
\def\Zbf{{\bf Z}}  \def\zbf{{\bf z}}
\def\Deltabf{\bm{\Delta}}  \def\deltabf{\bm{\delta}}
\def\etabf{{\bm{\eta}}}
\def\xibf{{\bm{\xi}}}
\def\Phibf{\bm{\Phi}}
\def\Psibf{\bm{\Psi}}
\def\Ccal{\mathcal{C}}  
\def\Dcal{\mathcal{D}}  
\def\Hcal{\mathcal{H}}  
\def\Kcal{\mathcal{K}}  
\def\Rcal{\mathcal{R}}  
\newif\ifshowWriterComment
\newcommand\writercomment[3]{\expandafter\newcommand\csname #2\endcsname[1]{\ifshowWriterComment{\color{#3} (#1: ##1)}\fi}}
\newcommand{\SFpair}{\{\Phibf_\xbf, \Phibf_\ubf\}}
\newcommand{\OFqple}{\{\Phibf_{\xbf\xbf}, \Phibf_{\ubf\xbf}, \Phibf_{\xbf\ybf}, \Phibf_{\ubf\ybf}\}}
\newcommand{\hatOFqple}{\{\hat{\Phibf}_{\xbf\xbf}, \hat{\Phibf}_{\ubf\xbf}, \hat{\Phibf}_{\xbf\ybf}, \hat{\Phibf}_{\ubf\ybf}\}}
\newcommand{\norm}[1]{\left\lVert #1 \right\rVert}
\def\mat#1{\begin{bmatrix}#1\end{bmatrix}}
\def\t{[t]}
\def\tm{[t-1]}
\def\cor#1{Corollary~\ref{cor:#1}}
\def\fig#1{Fig.~\ref{fig:#1}}
\def\subfig#1#2{Fig.~\ref{fig:#1}(\subref{subfig:#1-#2})}
\def\lem#1{Lemma~\ref{lem:#1}}
\def\sec#1{Section~\ref{sec:#1}}
\def\eqn#1{\eqref{eqn:#1}}
\def\st{{\rm s.t.}}
\def\OptConsSep{&&\quad}
\newcommand\OptCons[3]{
&\ #1
\ifx\\#2\\ \else \OptConsSep #2 \fi%
\ifx\\#3\\ \nonumber \else \label{eqn:#3} \fi%
}
\newcommand{\OptMinN}[2]{
\begin{alignat*}{2}
\min\ &\ #1 \\
\st\ #2
\end{alignat*}
}
\def\xx{\xbf\xbf}  \def\xu{\xbf\ubf}  \def\xy{\xbf\ybf}
\def\ux{\ubf\xbf}  \def\uu{\ubf\ubf}  \def\uy{\ubf\ybf}  \def\uz{\ubf\zbf} \def\uw{\ubf\wbf}
\def\yx{\ybf\xbf}  \def\yu{\ybf\ubf}  \def\yy{\ybf\ybf}  \def\yz{\ybf\zbf}  \def\yw{\ybf\wbf}
\def\zy{\zbf\ybf}  \def\zu{\zbf\ubf}  \def\zw{\zbf\wbf}  \def\zz{\zbf\zbf}
\def\wy{\wbf\ybf}  \def\wu{\wbf\ubf}  \def\ww{\wbf\wbf}  \def\wz{\wbf\zbf}
\def\xd{\xbf\deltabf}
\def\ud{\ubf\deltabf}
\def\dx{\deltabf\xbf}  \def\du{\deltabf\ubf}  \def\dd{\deltabf\deltabf}
\def\Rtru{\Rbf(\Deltabf)}
\def\Stru{\Sbf(\Deltabf)}
\def\Rnm{\hat{\Rbf}}
\def\Snm{\hat{\Sbf}}
\title{Realization-Stability Lemma \\for Controller Synthesis}
\author{Shih-Hao~Tseng
\thanks{
This paper was presented in part at IEEE American Control Conference, May 26--28, 2021 and at IEEE Conference on Decision and Control, December 13--15, 2021.
}
\thanks{Shih-Hao Tseng was with Division of Engineering and Applied Science, California Institute of Technology, Pasadena, CA 91125 USA.}
}
\begin{document}

\maketitle

\bstctlcite{IEEE_BSTcontrol}

\begin{abstract}

We have witnessed the emergence of several controller parameterizations and the corresponding synthesis methods, including Youla, system level, input-output, and many other new proposals. Meanwhile, under the same synthesis method, there are multiple realizations to adopt. Different synthesis methods/realizations target different plants/scenarios. Also, various robust results are proposed to deal with different perturbed system structures. Except for some case-by-case studies, we don't currently have a unified framework to understand their relationships.

To address the issue, we show that existing controller synthesis methods and realization proposals are all special cases of a simple lemma, the realization-stability lemma. The lemma leads to easier equivalence proofs among existing methods and robust stability conditions for general system analysis. It also enables the formulation of a general controller synthesis problem, which provides a unified foundation for controller synthesis, realization derivation, and robust stability analysis.

\end{abstract}

\begin{IEEEkeywords}
Cyber-physical systems, system level synthesis.
\end{IEEEkeywords}

\section{Introduction}\label{sec:introduction}

\IEEEPARstart{C}{ontroller} synthesis is one of the core missions in control theory. It aims to derive controllers that could stabilize a given plant in the presence of \emph{disturbance} and/or \emph{perturbation}. Synthesizing such controllers, the internally stabilizing controllers, is highly non-trivial, especially as the modern systems grow larger, more complex, and involve more input/output signals.
Meanwhile, the ever-increasing system complexity also demands sophisticated and efficient controller synthesis techniques and drives the research of controller synthesis theories.

The first challenge for controller synthesis is \emph{external disturbance rejection}, i.e., the synthesized controller should be able to neutralize the impact of unanticipated external disturbances in the long run.
A well-celebrated pioneer work on this direction is by Youla et al. \cite{youla1976modern1, youla1976modern2}, which shows that the set of all internally stabilizing controllers can be parameterized using a coprime factorization approach. One drawback of Youla parameterization is the difficulty of imposing structural constraints -- the constraints could only be imposed (while maintaining convexity)
in an intricate form admitting quadratic invariant property \cite{rotkowitz2005characterization,sabau2014youla,lessard2015convexity}.
To address this issue, system level parameterization (SLP) \cite{anderson2019system,wang2019system} proposes to work on the closed-loop system response and the corresponding system level synthesis (SLS) method can easily incorporate multiple structural constraints into a much simpler convex program \cite{wang2016localized,anderson2017structured}.
The success of SLP triggers the study of affine space parameterization of internally stabilizing controllers. \cite{furieri2019input} shows that the set of internally stabilizing controllers can also be parameterized in an input-output manner using the input-output parameterization (IOP).
Though a recent paper shows that Youla, SLP, and IOP are equivalent \cite{zheng2020equivalence}, there are still new affine space parameterizations found \cite{zheng2019system}.

Besides rejecting external disturbances, more advanced controller synthesis theories study perturbed plants and aim to achieve \emph{robust stability}, where the controller could still stabilize the underlying plant even when perturbation, or sometimes referred as \emph{uncertainty}, occurs. There are many robust results proposed in the literature, and they mainly focus on mitigating one of the following two concerns -- plant uncertainty or controller perturbation.
Due to the estimation precision, dimension limit, or pliant nature, the plant model could differ from its true dynamics. To deal with this uncertainty, one category of robust results aims to synthesize a controller that can stabilize a set of plants, such as $\mu$-synthesis \cite{doyle1982analysis, doyle1985structured, zhou1998essentials}, robust primal-dual Youla parameterization \cite{niemann2002reliable}, and robust input-output parameterization (IOP) \cite{zheng2020sample}.
On the other hand, even with an exact plant model, realization of the synthesized controller may still deviate from its desired form because of resolution restriction. As such, another category of robust results ensures a perturbed controller realization can still stabilize the plant, e.g., robust system level synthesis (SLS) \cite{matni2017Scalable,anderson2019system,boczar2018finite}.

Given the flourishing development of novel parameterizations and their corresponding synthesis methods, we have some natural questions to ask: Have we exhausted all possible parameterizations? Will we discover new synthesis methods? If so, why would they be the way they are? And, perhaps more importantly, how could we find/understand them systematically?

To add to this already puzzling situation, we have seen new results on realizations. Realizations, or block diagrams/implementations,\footnote{We adopt the terminology in \cite{tseng2020deployment,tsengsubsynthesis} that distinguishes ``realizations'' from ``implementations,'' where the former refers to the block diagrams (mathematical expressions) and the latter is reserved for the physical architecture consisting of computation, memory, and communication units.}
describe how a system can be built from some interconnection of basic blocks/transfer functions.
It is well known, also shown by recent studies \cite{tseng2020deployment,li2020separating}, that the same controller can admit multiple different realizations, even under the same parameterization scheme.
We would then wonder if we can only handle those realizations individually, or if there is a unified framework to study them.

Similarly, robust controller synthesis results are derived via various analysis tools targeting distinct settings. It is not straightforward to see how they relate with one another and how one method may be applicable for a different setting. As a consequence, most robust results are taught, learned, and applied in a case-by-case manner. Moreover, in addition to the two concerns above, one can easily imagine some compound scenarios where both the plant and controller are subject to perturbation. We would then wonder how to deal with diverse perturbation scenarios systematically. In particular, we are interested in a unified approach to robust controller analysis and synthesis.

\subsection{Contributions and Organization}

The main contribution of this paper is a systematic approach to all of the above seemingly unrelated questions through a simple \emph{realization-stability lemma} that relates closed-loop realizations with internal stability. The lemma enables us to formulate the general controller synthesis problem that can derive \emph{all} possible parameterizations, thus providing a systematic way to study controller synthesis problems. We show that existing methods on controller synthesis and realization are all special cases of the general formulation. In addition, the lemma reveals that the transformation of external disturbances can be seen as the derivation of an equivalent system. The concept of equivalent systems then enables easy proof of equivalence among synthesis methods.

Also, this paper provides such a unified approach through the robust stability conditions for general systems derived from the realization-stability lemma.
As both plant and controller are included in a realization, the uncertainty to any of them is deemed a perturbation of the realization, thereby unifying diverse robustness concerns into one coherent form.
We also specialize our result for additively perturbed realizations. The robust stability condition leads to the formulation of the general robust controller synthesis problem, and we demonstrate how to derive existing results in the literature using the condition. In addition, we show how new robust results for output-feedback SLS and IOP can be obtained easily from the condition.

The paper is organized as follows. In~\sec{R-S_lemma}, we derive the realization-stability lemma, introduce equivalent systems under transformations, and formulate the general controller synthesis problem. We then extend the lemma to robust settings and derive the robust stability conditions in~\sec{robust_analysis}, along with the corresponding formulation of the general robust controller synthesis problem.
Leveraging the realization-stability lemma and the robust stability conditions, we unify existing controller synthesis results in~\sec{controller_synthesis}, realization results in \sec{realizations}, and robust results in~\sec{existing_robust_results} as corollaries.
In addition, we demonstrate how to apply our lemmas to derive new results and discuss the application scope in~\sec{applications}. Finally, we conclude the paper in~\sec{conclusion}.

\subsection{Notation}

\def\Rp{\Rcal_{p}}
\def\Rsp{\Rcal_{sp}}
\def\RHinf{\Rcal\Hcal_{\infty}}

Let $\Rp$, $\Rsp$, and $\RHinf$ denote the set of proper, strictly proper, and stable proper transfer matrices, respectively, all defined according to the underlying setting, continuous or discrete.
Lower- and upper-case letters (such as $x$ and $A$) denote vectors and matrices respectively, while bold lower- and upper-case characters and symbols (such as $\ubf$ and $\Rbf$) are reserved for signals and transfer matrices. We denote by $I$ and $O$ the identity and all-zero matrices (with dimensions defined according to the context).

\def\xx{\xbf\xbf}  \def\xu{\xbf\ubf}  \def\xy{\xbf\ybf}
\def\ux{\ubf\xbf}  \def\uu{\ubf\ubf}  \def\uy{\ubf\ybf}
\def\yx{\ybf\xbf}  \def\yu{\ybf\ubf}  \def\yy{\ybf\ybf}

\def\xd{\xbf\deltabf}
\def\ud{\ubf\deltabf}
\def\dx{\deltabf\xbf}  \def\du{\deltabf\ubf}  \def\dd{\deltabf\deltabf}

\section{Realization-Stability Lemma}\label{sec:R-S_lemma}
To begin with, we define the realization and internal stability matrices to derive the realization-stability lemma. We then discuss the transformation of external disturbances and introduce the concept of equivalent systems. Using the realization-stability lemma, we propose the formulation of a general controller synthesis problem.

We remark that the results in this section are general: They apply to both discrete-time and continuous-time systems.

\subsection{Realization and Internal Stability}\label{sec:R-S_lemma-RS}
\begin{figure}
\centering
\includegraphics[scale=1]{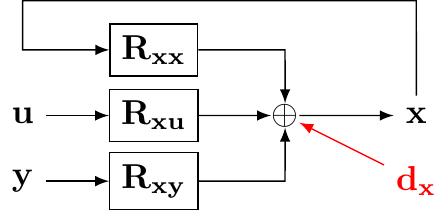}
\caption{The realization matrix $\Rbf$ describes each signal as a linear combination of the signals in the closed-loop system and the external disturbance $\dbf$. In the following figures of realizations, we omit drawing the additive disturbance $\dbf$ for simplicity.}
\label{fig:R-S-realization}
\end{figure}

We consider a closed-loop linear system with internal state $\etabf$ and external disturbance $\dbf$. The system operates according to the \emph{realization matrix} $\Rbf$:
\begin{align}
\etabf = \Rbf \etabf + \dbf.
\label{eqn:R}
\end{align}
$\etabf$ summarizes \emph{all} signals in the system. For instance, a state-feedback system might have $\etabf = \mat{\xbf\\ \xibf\\ \ubf}$ where $\xbf$ is the state, $\xibf$ is the internal state, and $\ubf$ is the control. For a given signal $\abf$, we denote by $e_{\abf}$ the column block that is identity at the rows corresponding to $\abf$ in $\etabf$. As a result,
$\etabf = \sum\limits_{\abf} e_{\abf} \abf$.

$\Rbf$ describes each signal as a linear combination of the signals in the system. We denote by $\Rbf_{\abf\bbf}$ the transfer matrix block from signal $\bbf$ to $\abf$ as shown in~\fig{R-S-realization}, and hence given a signal $\abf$, we have $\abf = \sum\limits_{\bbf} \Rbf_{\abf\bbf} \bbf + \dbf_{\abf}$, where $\dbf_{\abf}$ is the external disturbance on $\abf$. Notice that all dimensions in the internal state $\etabf$ have their corresponding share in $\dbf$, thereby avoiding the partial selection issues discussed in \cite{zheng2020equivalence}.

On the other hand, if we deem the external disturbance $\dbf$ as the input and the internal state $\etabf$ as the output, we can treat the closed-loop system as an open-loop system. We denote by the \emph{internal stability matrix} (or \emph{stability matrix} for short) $\Sbf$ the transfer matrix of such an open-loop system:
\begin{align}
\etabf = \Sbf \dbf.
\label{eqn:S}
\end{align}
We define $\Sbf_{\abf\bbf}$ as the transfer matrix block from disturbance on $\bbf$ to the signal $\abf$, and the columns in $\Sbf$ corresponding to $\bbf$ is denoted by $\Sbf_{:,\bbf}$.

The realization matrix $\Rbf$ and the stability matrix $\Sbf$ are related by the following lemma.
\begin{lemma}[Realization-Stability]\label{lem:R-S}
Let $\Rbf$ be the realization matrix and $\Sbf$ be the internal stability matrix, we have
\begin{align*}
(I - \Rbf) \Sbf = \Sbf(I - \Rbf) = I.
\end{align*}
\end{lemma}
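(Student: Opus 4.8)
The plan is to derive both identities directly from the two defining relations \eqn{R} and \eqn{S}. From \eqn{R} we have $\etabf = \Rbf\etabf + \dbf$, which rearranges to $(I-\Rbf)\etabf = \dbf$. Substituting the open-loop description $\etabf = \Sbf\dbf$ from \eqn{S} gives $(I-\Rbf)\Sbf\dbf = \dbf$ for every admissible external disturbance $\dbf$. Since $\dbf$ ranges over a full-dimensional signal space (every coordinate of $\etabf$ has its own share in $\dbf$, as noted after \eqn{R}), we may conclude $(I-\Rbf)\Sbf = I$ as transfer matrices.

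For the other order, I would substitute in the opposite direction: starting from $\etabf = \Sbf\dbf$ and using $\dbf = (I-\Rbf)\etabf$, we get $\etabf = \Sbf(I-\Rbf)\etabf$, valid for every internal state $\etabf$ arising from some disturbance. If the map $\dbf \mapsto \etabf = \Sbf\dbf$ is onto the $\etabf$-space (which follows once we know $(I-\Rbf)\Sbf = I$, since then $\Sbf$ has a left inverse and $I-\Rbf$ has a right inverse, so in the square transfer-matrix setting both are invertible), then $\etabf$ also ranges over a full space and we conclude $\Sbf(I-\Rbf) = I$. Alternatively, once $(I-\Rbf)\Sbf = I$ is established for square matrices over the field of rational functions, the reverse identity $\Sbf(I-\Rbf)=I$ is automatic, since a square matrix with a one-sided inverse over a (commutative) ring of this type is invertible with two-sided inverse.

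The main obstacle, and the point that needs care rather than calculation, is justifying the passage from ``holds when applied to every disturbance/state signal'' to ``holds as an identity of transfer matrices,'' together with the well-posedness of $\Sbf$ itself: $\eqn{S}$ presupposes that $(I-\Rbf)$ is invertible as a transfer matrix so that $\Sbf = (I-\Rbf)^{-1}$ is well defined. The cleanest route is to take the existence of $\Sbf$ satisfying \eqn{S} for all $\dbf$ as the standing hypothesis (it is what makes the closed-loop system a legitimate open-loop system, as the text motivates), and then the lemma is essentially the statement that $\Sbf$ is precisely the inverse of $I-\Rbf$. I would therefore phrase the proof as: (i) \eqn{R} gives $(I-\Rbf)\etabf=\dbf$; (ii) combine with \eqn{S} to get $(I-\Rbf)\Sbf\dbf=\dbf$ and $\Sbf(I-\Rbf)\etabf=\etabf$ for all $\dbf,\etabf$; (iii) since these hold for arbitrary signals spanning the relevant spaces, the bracketed transfer matrices equal $I$.

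I would keep the write-up to a few lines, since the content is genuinely this short; the only thing worth an explicit remark is why $\dbf$ (resp.\ $\etabf$) can be treated as ranging over a full-dimensional space, which is exactly the ``no partial selection'' design choice flagged right after \eqn{R}.
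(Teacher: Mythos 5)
Your proposal is correct and follows essentially the same route as the paper: substitute \eqn{S} into \eqn{R} to get $(I-\Rbf)\Sbf\dbf=\dbf$ for arbitrary $\dbf$, conclude $(I-\Rbf)\Sbf=I$, and then obtain $\Sbf(I-\Rbf)=I$ from the fact that a one-sided inverse of a square transfer matrix is automatically two-sided (your ``alternatively'' branch is exactly the paper's argument). Your added remarks on why $\dbf$ spans a full-dimensional space and on the standing existence hypothesis for $\Sbf$ are consistent with the paper's own caveat following the lemma.
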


\begin{proof}
Substituting \eqn{S} into \eqn{R} yields
\begin{align*}
(I - \Rbf)\etabf = (I - \Rbf)\Sbf \dbf = \dbf.
\end{align*}
Since $\dbf$ is arbitrary, we have
\begin{align*}
(I - \Rbf)\Sbf = I.
\end{align*}
Given $I - \Rbf$ and $\Sbf$ are both square matrices, we have
\begin{align*}
\Sbf = (I-\Rbf)^{-1}  \quad \Rightarrow \quad
\Sbf (I - \Rbf) = I,
\end{align*}
which concludes the proof.
\end{proof}

We remark that \lem{R-S} does not guarantee the existence of either $\Rbf$ or $\Sbf$. Rather, it says if both $\Rbf$ and $\Sbf$ exist, they must obey the relation. When they both exist, a consequence of \lem{R-S} is that $\Rbf \to \Sbf$ is a bijection map. In other words, if two systems have the same realization $\Rbf$ (or $I-\Rbf$, equivalently), they have the same internal stability $\Sbf$.

\subsection{Disturbance Transformation and Equivalent System}\label{sec:R-S_lemma-T}
In \eqn{R}, the external disturbance $\dbf$ affects each signal in the system independently. We can extend \eqn{R} and \eqn{S} to the cases where the dimensions in $\dbf$ are correlated. In particular, the external disturbance could be a \emph{transformation} $\Tbf$ on a different basis $\wbf$:
\begin{align*}
\dbf = \Tbf \wbf.
\end{align*}

When the transformation $\Tbf$ is invertible, we have
\begin{align*}
(I - \Rbf) \etabf = \Tbf \wbf
\quad\Rightarrow&\quad
\Tbf^{-1}(I - \Rbf) \etabf = \wbf = (I-\Rbf_{eq}) \etabf,\\
&\etabf = \Sbf \Tbf \wbf = \Sbf_{eq} \wbf.
\end{align*}
In other words, the transformation of the disturbance $\dbf = \Tbf \wbf$ can be seen as the derivation of an equivalent closed-loop system with realization $\Rbf_{eq}$ and stability $\Sbf_{eq}$ based on internal state $\etabf$ and external disturbance $\wbf$.

The derivation of an equivalent system is helpful for stability analysis. Since \lem{R-S} suggests that there is a bijection map from $\Rbf$ to $\Sbf$. If there are two systems with realizations $\Rbf_1$ and $\Rbf_2$ and we can relate them through an (invertible) transformation $\Tbf$ by
\begin{align*}
(I - \Rbf_2) = \Tbf^{-1}(I-\Rbf_1),
\end{align*}
their stability matrices will follow
\begin{align*}
\Sbf_2 = \Sbf_1 \Tbf.
\end{align*}

\subsection{Controller Synthesis and Column Dependency}\label{sec:R-S_lemma-controller-synthesis}
Notice that \lem{R-S} holds for arbitrary realization/internal stability matrices, e.g., non-causal $\Rbf$ and unstable $\Sbf$. When synthesizing a controller, we require the closed-loop system to be causal and internally stable. In other words, the transfer functions from one signal to any different signal should be proper, and the transfer functions from the external disturbance $\dbf$ to the internal state $\etabf$ should be stable proper, which are written as the following conditions:
\begin{align}
\Rbf_{\abf\bbf} \in \Rp, \forall \abf \neq \bbf, \quad\quad \Sbf \in \RHinf.
\label{eqn:R-S-condition}
\end{align}
Here, we implicitly require the existence of both $\Rbf$ and $\Sbf$. Accordingly, general controller synthesis problems (i.e., \emph{all} possible controller synthesis problems for a system described by some $\Rbf$) can be formulated as
\OptMinN{
g(\Rbf,\Sbf)
}{
\OptCons{(I-\Rbf)\Sbf = \Sbf(I-\Rbf) = I}{}{}\\
\OptCons{\Rbf_{\abf\bbf} \in \Rp}{\forall \abf \neq \bbf}{}\\
\OptCons{\Sbf \in \RHinf}{}{}\\
\OptCons{(\Rbf,\Sbf) \in \Ccal}{}{}
}
where $g$ is the objective function and $\Ccal$ represents the additional constraints on the realization and internal stability.
In the following sections, we will show that the existing controller synthesis methods/realization studies that focus on internal stability are essentially special cases of the feasible set in this general formulation.

A key constraint in the general controller synthesis problem is to enforce $\Sbf \in \RHinf$. Although we need to enforce all elements in $\Sbf$ to be in $\RHinf$, we can leverage the linear dependency among the components brought by \lem{R-S} to derive some parts automatically without explicit enforcement. In particular, we have \lem{dependency}.

\begin{lemma}\label{lem:dependency}
Let $\abf$ be a signal and $\Rbf_{\abf\abf} = O$, then
\begin{align*}
\Sbf_{:,\abf} = e_{\abf} + \sum\limits_{\bbf \neq \abf} \Sbf_{:,\bbf}\Rbf_{\bbf\abf}.
\end{align*}
\end{lemma}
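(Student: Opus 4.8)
The plan is to extract the claimed identity directly from the Realization-Stability Lemma by reading off a single block column. From \lem{R-S} we have $\Sbf(I-\Rbf) = I$, or equivalently $\Sbf = \Sbf\Rbf + I$. I would restrict this matrix identity to the block column indexed by the signal $\abf$: multiplying both sides on the right by $e_{\abf}$ gives
\begin{align*}
\Sbf_{:,\abf} = \Sbf\Rbf e_{\abf} + e_{\abf}.
\end{align*}
The work then lies in expanding $\Sbf\Rbf e_{\abf}$ in terms of the blocks. Using the decomposition $\etabf = \sum_\bbf e_\bbf \bbf$ and the correspondence between $\Rbf_{\bbf\abf}$ and the block of $\Rbf$ from the $\abf$-rows to the $\bbf$-rows, the column $\Rbf e_{\abf}$ is the stack of blocks $\Rbf_{\bbf\abf}$ over all signals $\bbf$; likewise $\Sbf e_\bbf$ picks out the column block $\Sbf_{:,\bbf}$. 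Hence
\begin{align*}
\Sbf \Rbf e_{\abf} = \sum_{\bbf} \Sbf_{:,\bbf} \Rbf_{\bbf\abf},
\end{align*}
and invoking the hypothesis $\Rbf_{\abf\abf} = O$ removes the $\bbf = \abf$ term, leaving exactly $\sum_{\bbf \neq \abf} \Sbf_{:,\bbf}\Rbf_{\bbf\abf}$. Combining with the $e_{\abf}$ term yields the stated formula.

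I expect the only genuine obstacle to be bookkeeping: making the informal "$\Rbf_{\abf\bbf}$ is the block of $\Rbf$ from $\bbf$ to $\abf$" precise enough that the product $\Sbf\Rbf$ decomposes blockwise the way claimed. Concretely, one should observe that $\sum_\bbf e_\bbf \Rbf_{\bbf\abf}$ (stacking the outgoing blocks of $\Rbf$ associated with source $\abf$) equals $\Rbf e_\abf$, which follows from the defining relation $\abf = \sum_\bbf \Rbf_{\abf\bbf}\bbf + \dbf_\abf$ read as the $\abf$-row block of $\etabf = \Rbf\etabf + \dbf$ together with $\etabf = \sum_\bbf e_\bbf \bbf$. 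Once this identification is in hand, the rest is immediate substitution, so I would keep the proof to essentially two displayed lines: substitute $\Sbf = \Sbf\Rbf + I$, right-multiply by $e_\abf$, and expand using $\Rbf_{\abf\abf} = O$.
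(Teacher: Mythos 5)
Your proposal is correct and follows essentially the same route as the paper: both restrict the identity $\Sbf(I-\Rbf)=I$ to the block column indexed by $\abf$, expand $(\Sbf\Rbf)_{:,\abf}=\sum_{\bbf}\Sbf_{:,\bbf}\Rbf_{\bbf\abf}$, and use $\Rbf_{\abf\abf}=O$ to drop the diagonal term. The only difference is cosmetic: you carry out the column restriction explicitly via right-multiplication by $e_{\abf}$, whereas the paper states the resulting block equation directly.
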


\begin{proof}
By \lem{R-S}, we have $\Sbf (I -\Rbf) = I$ and hence
\begin{align*}
\Sbf_{:,\abf}(I - \Rbf_{\abf\abf}) - \sum\limits_{\bbf \neq \abf} \Sbf_{:,\bbf}\Rbf_{\bbf\abf} = e_{\abf}.
\end{align*}
The lemma follows as $\Rbf_{\abf\abf} = O$.
\end{proof}

\lem{dependency} can greatly reduce the decision variables when synthesizing a controller. For instance, the synthesized control $\ubf$ is usually a function of other signals except for itself, which implies $\Rbf_{\uu} = O$. Therefore, \lem{dependency} gives
\begin{align}
\Sbf_{:,\ubf} = e_{\ubf} + \sum\limits_{\bbf \neq \ubf} \Sbf_{:,\bbf}\Rbf_{\bbf\ubf}.
\label{eqn:u-column}
\end{align}

\section{Robust Stability Analysis}\label{sec:robust_analysis}
In this section, we derive the condition for robust stability analysis using \lem{R-S}. The condition then allows us to formulate the general robust controller synthesis problem.

\begin{figure}
\centering
\subcaptionbox{Realization perturbed by $\Deltabf$.\label{subfig:perturbed-realization-general-perturbation}}{\includegraphics{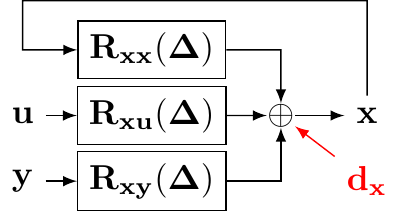}}\hfill
\subcaptionbox{Additive Perturbation.\label{subfig:perturbed-realization-additive-perturbation}}{\includegraphics{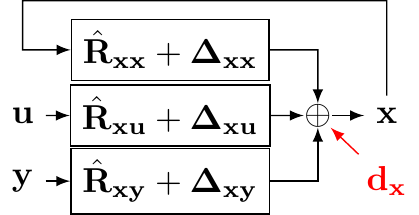}}
\caption{We denote by $\Rtru$ the realization matrix under perturbation $\Deltabf$ and by $\Rnm$ the nominal realization matrix without perturbation.}
\label{fig:perturbed-realization}
\end{figure}

\subsection{Robust Stability and Additive Perturbations}
Consider a system perturbed according to some uncertain parameter $\Deltabf \in \Dcal$ as in \subfig{perturbed-realization}{general-perturbation}, where $\Dcal$ is the uncertainty set. Denote by $\Rtru$ its realization matrix and by $\Stru$ the corresponding stability matrix.
By \lem{R-S}, the perturbed realization and stability matrices satisfy
\begin{align*}
(I - \Rtru) \Stru = \Stru (I - \Rtru) = I.
\end{align*}
Also, the perturbed system is \emph{robustly stable} if and only if the open-loop system from the external disturbance $\dbf$ to the internal state $\etabf$ is stable under all uncertain parameter $\Deltabf \in \Dcal$. In other words, we require the stability matrix $\Stru$ to obey
\begin{align}
\Stru \in \RHinf, \quad \forall \Deltabf \in \Dcal.
\label{eqn:robust-S-general}
\end{align}

Suppose the system is subject to additive perturbation\footnote{Some papers refer the additive perturbation here as ``multiplicative fault'' \cite{niemann2002reliable} since it appears in the equations as a multiplier of a signal.} and its realization matrix $\Rtru$ can be expressed as
\begin{align}
\Rtru = \Rnm + \Deltabf
\label{eqn:perturbed-R}
\end{align}
where $\Rnm$ is the \emph{nominal realization}, as shown in \subfig{perturbed-realization}{additive-perturbation}. Define the \emph{nominal stability} $\Snm$ as the stability matrix accompanying the nominal realization $\Rnm$, we can express $\Stru$ in terms of $\Snm$ and the perturbation $\Deltabf$ as follows.

\begin{lemma}[Stability under Additive Perturbation]\label{lem:perturbed-S}
Let $\Rnm$ be the nominal realization matrix and $\Snm$ be the nominal internal stability matrix. Suppose the system realization $\Rtru$ is subject to additive perturbation \eqn{perturbed-R},
the corresponding stability $\Stru$ is given by
\begin{align*}
\Stru = \Snm (I - \Deltabf \Snm)^{-1} = (I - \Snm\Deltabf)^{-1} \Snm.
\end{align*}
\end{lemma}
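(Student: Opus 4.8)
The plan is to apply the Realization-Stability Lemma (\lem{R-S}) to the perturbed system and then algebraically isolate $\Snm$. First I would write down what \lem{R-S} gives for the nominal pair: $(I - \Rnm)\Snm = \Snm(I - \Rnm) = I$, so $\Snm = (I - \Rnm)^{-1}$. Likewise, applying \lem{R-S} to the perturbed realization $\Rtru = \Rnm + \Deltabf$ gives $\Stru = (I - \Rtru)^{-1} = (I - \Rnm - \Deltabf)^{-1}$, again because $I - \Rtru$ and $\Stru$ are square and mutually inverse.

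The heart of the argument is then a Woodbury/Neumann-style factorization of $(I - \Rnm - \Deltabf)^{-1}$. I would factor out $(I - \Rnm)$ on the right:
\begin{align*}
I - \Rnm - \Deltabf = \(I - \Deltabf (I - \Rnm)^{-1}\)(I - \Rnm) = (I - \Deltabf \Snm)(I - \Rnm).
\end{align*}
Inverting both sides (and using that the product of invertibles is invertible) yields
\begin{align*}
\Stru = (I - \Rnm)^{-1}(I - \Deltabf \Snm)^{-1} = \Snm (I - \Deltabf \Snm)^{-1}.
\end{align*}
Symmetrically, factoring $(I - \Rnm)$ out on the left,
\begin{align*}
I - \Rnm - \Deltabf = (I - \Rnm)\(I - (I - \Rnm)^{-1}\Deltabf\) = (I - \Rnm)(I - \Snm \Deltabf),
\end{align*}
so that $\Stru = (I - \Snm\Deltabf)^{-1}\Snm$, giving the second expression. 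This also re-derives the well-known "push-through" identity $\Snm (I - \Deltabf \Snm)^{-1} = (I - \Snm\Deltabf)^{-1}\Snm$ as a by-product.

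The only delicate point — and the main thing to be careful about rather than a genuine obstacle — is the invertibility bookkeeping: the factorization makes sense only because $I - \Rnm$ is invertible (guaranteed by \lem{R-S} once $\Snm$ exists), and the two inverses $(I - \Deltabf\Snm)^{-1}$ and $(I - \Snm\Deltabf)^{-1}$ exist precisely because $I - \Rtru = (I - \Deltabf\Snm)(I - \Rnm)$ is invertible as a whole (which holds whenever $\Stru$ exists, the standing hypothesis of the statement). So I would state at the outset that we assume both $\Snm$ and $\Stru$ exist — consistent with the remark following \lem{R-S} — and then the equalities are pure matrix algebra with no convergence or size conditions needed. I expect the write-up to be only a few lines.
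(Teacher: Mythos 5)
Your proposal is correct and follows essentially the same route as the paper: both use \lem{R-S} to write $(I-\Rtru)\Snm = I - \Deltabf\Snm$ (equivalently, your factorization $I-\Rtru = (I-\Deltabf\Snm)(I-\Rnm)$) and then invert, with the symmetric argument for the second expression. Your extra care about the invertibility of $I-\Deltabf\Snm$ is a harmless refinement of what the paper leaves implicit.
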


\begin{proof}
\lem{R-S} implies $(I - \Rnm) \Snm = I$.
Therefore,
\begin{align*}
(I - \Rtru) \Snm = (I - \Rnm - \Deltabf) \Snm = I - \Deltabf \Snm.
\end{align*}
As a result, \lem{R-S} suggests
\begin{gather*}
(I - \Rtru) \Snm (I - \Deltabf \Snm)^{-1} = I \nonumber \\
\Rightarrow\quad
\Stru = \Snm (I - \Deltabf \Snm)^{-1}.
\end{gather*}
Similarly, we can also derive $\Stru = (I - \Snm \Deltabf)^{-1} \Snm$ from $\Snm(I-\Rnm) = I$, and the lemma follows.
\end{proof}

We can interpret the resulting stability matrix $\Sbf(\Deltabf)$ in \lem{perturbed-S} as the nominal stability $\Snm$ with a feedback path $\Deltabf$ as in \fig{perturbed-stability}. From this perspective, an additive perturbation to the realization results in a feedback path in the stability.

\begin{figure}
\centering
\includegraphics{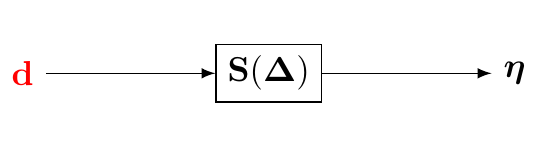}
\includegraphics{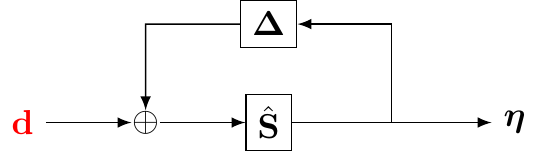}
\caption{The stability matrix $\Sbf$ maps external disturbance $\dbf$ to internal state $\etabf$. We denote by $\Stru$ the stability matrix under perturbation $\Deltabf$ and by $\Snm$ the nominal stability matrix without perturbation. When the perturbation is additive to the realization, it results in a feedback path for the stability.}
\label{fig:perturbed-stability}
\end{figure}

According to \lem{perturbed-S} and condition \eqn{robust-S-general}, to ensure robust stability of the system, we need
\begin{align}
\Snm (I - \Deltabf \Snm)^{-1} \in \RHinf, \quad \forall \Deltabf \in \Dcal.
\label{eqn:robust-S}
\end{align}

\subsection{General Formulation for Robust Controller Synthesis}
We can now generalize the general controller synthesis problem in \cite{tseng2021realization} to its robust version using condition \eqn{robust-S-general}:
\OptMinN{
g(\Rtru,\Stru,\Dcal)
}{
&\ (I-\Rtru)\Stru = \Stru&&(I-\Rtru) = I \\
\OptCons{}{\forall \Deltabf \in \Dcal}{}\\
\OptCons{\Rtru_{\abf\bbf} \in \Rp}{\forall \Deltabf \in \Dcal, \abf \neq \bbf}{}\\
\OptCons{\Stru \in \RHinf}{\forall \Deltabf \in \Dcal}{}\\
\OptCons{(\Rtru,\Stru) \in \Ccal}{\forall \Deltabf \in \Dcal}{}
}
where $\Ccal$ represents some additional constraints on the realization and stability. In particular, for a system subject to additive perturbation, the problem can be reformulated as
\OptMinN{
g(\Rnm,\Snm,\Dcal)
}{
\OptCons{(I-\Rnm)\Snm = \Snm(I-\Rnm) = I}{}{}\\
\OptCons{\Rtru_{\abf\bbf} = (\Rnm + \Deltabf)_{\abf\bbf} \in \Rp}{\forall \Deltabf \in \Dcal, \abf \neq \bbf}{}\\
\OptCons{\Stru = \Snm (I - \Deltabf \Snm)^{-1} \in \RHinf}{\forall \Deltabf \in \Dcal}{}\\
\OptCons{(\Rtru,\Stru) \in \Ccal}{\forall \Deltabf \in \Dcal}{}
}
This formulation is general as it can describe not only the robust controller synthesis problem for a given uncertainty set $\Dcal$ but also the stability margin problem like $\mu$-synthesis \cite{doyle1982analysis, doyle1985structured, zhou1998essentials}, where $\Dcal$ is itself a variable to ``maximize''.

Despite its generality, solving this general formulation is challenging in general, and the major obstacle is to ensure the perturbed stability for all $\Deltabf \in \Dcal$. Except for some computationally tractable cases as those listed in \sec{existing_results}, enforcing the robust constraints involves dealing with semi-infinite programming when the $\Dcal$ has infinite cardinality. For those cases, one may instead enforce chance constraints and adopt sampling-based techniques as in \cite{calafiore2005uncertain, erdougan2006ambiguous,tseng2016random}.

\section{Corollaries: Controller Synthesis}\label{sec:controller_synthesis}
We use \lem{R-S} and condition \eqn{R-S-condition} to derive existing controller synthesis proposals, including Youla \cite{youla1976modern2}, input-output \cite{furieri2019input}, system level \cite{wang2019system,anderson2019system}, mixed parameterizations \cite{zheng2019system}, and generalized system level synthesis \cite{szabo2021generalised}, with different $\Rbf$ and $\Sbf$ structures. We then demonstrate a simpler way to obtain the results in \cite{zheng2020equivalence} using transformations.

\begin{figure}
\centering
\includegraphics[scale=1]{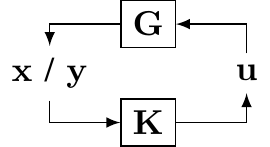}
\caption{The realization with plant $\Gbf$ and controller $\Kbf$. The internal signals include state $\xbf$ (or measurement $\ybf$) and control $\ubf$.}
\label{fig:realization-G-K}
\end{figure}

\subsection{Youla Parametrization}\label{sec:controller_synthesis-Youla}
Youla parameterization is based on the doubly coprime factorization of the plant $\Gbf$. If $\Gbf$ is stabilizable and detectable, we have
\begin{align}
\mat{
\Mbf_l & -\Nbf_l\\
-\Vbf_l & \Ubf_l
}\mat{
\Ubf_r & \Nbf_r\\
\Vbf_r & \Mbf_r
} = I
\label{eqn:coprime-factorization}
\end{align}
where both matrices are in $\RHinf$, $\Mbf_l$ and $\Mbf_r$ are both invertible in $\RHinf$, and $\Gbf = \Mbf_l^{-1} \Nbf_l = \Nbf_r \Mbf_r^{-1}$ \cite[Theorem 5.6]{zhou1998essentials}.

The following corollary is a modern rewrite of the original Youla parameterization in \cite[Lemma 3]{youla1976modern2} given by \cite[Theorem 11.6]{zhou1998essentials}:
\begin{corollary}
Let the plant $\Gbf$ be doubly coprime factorizable. Given $\Qbf \in \RHinf$, the set of all proper controllers achieving internal stability is parameterized by
\begin{align*}
\Kbf = (\Vbf_r - \Mbf_r \Qbf)(\Ubf_r - \Nbf_r \Qbf)^{-1}.
\end{align*}
\end{corollary}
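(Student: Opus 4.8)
The plan is to read the plant-and-controller loop of \fig{realization-G-K} as a special case of the general formulation in \sec{R-S_lemma-controller-synthesis} and then match the feasibility conditions \eqn{R-S-condition} with the classical notion of internal stability. Taking $\etabf = \mat{\ybf \\ \ubf}$, the loop equations $\ybf = \Gbf\ubf + \dbf_\ybf$ and $\ubf = \Kbf\ybf + \dbf_\ubf$ give the realization matrix $\Rbf = \mat{O & \Gbf \\ \Kbf & O}$, so that $I - \Rbf = \mat{I & -\Gbf \\ -\Kbf & I}$ and, by \lem{R-S}, $\Sbf = (I - \Rbf)^{-1}$. The off-diagonal properness requirements in \eqn{R-S-condition} then read $\Gbf \in \Rp$ (true by assumption on the plant) and $\Kbf \in \Rp$, while $\Sbf \in \RHinf$ is exactly the requirement that the four blocks of $\Sbf$, i.e.\ the closed-loop maps from $(\dbf_\ybf, \dbf_\ubf)$ to $(\ybf, \ubf)$, be stable --- that is, that $\Kbf$ internally stabilizes $\Gbf$ in the usual sense. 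Hence the corollary reduces to the classical parameterization of all proper internally stabilizing controllers, and what remains is to obtain that parameterization from the double coprime factorization \eqn{coprime-factorization}.

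For the sufficiency direction I would substitute the candidate $\Kbf = (\Vbf_r - \Mbf_r\Qbf)(\Ubf_r - \Nbf_r\Qbf)^{-1}$ and verify $\Sbf \in \RHinf$ directly. The key algebraic fact is that $\mat{\Ubf_r - \Nbf_r\Qbf & \Nbf_r \\ \Vbf_r - \Mbf_r\Qbf & \Mbf_r} = \mat{\Ubf_r & \Nbf_r \\ \Vbf_r & \Mbf_r}\mat{I & O \\ -\Qbf & I}$ is a product of two matrices invertible in $\RHinf$ (the first by \eqn{coprime-factorization}, the second since $\Qbf \in \RHinf$), which both guarantees well-posedness of $(\Ubf_r - \Nbf_r\Qbf)^{-1}$ over $\Rp$ and lets me express every block of $\Sbf$ as a finite product of the $\RHinf$ factors $\Mbf_l, \Nbf_l, \Mbf_r, \Nbf_r, \Ubf_r, \Vbf_r, \Qbf$, using $\Gbf = \Nbf_r\Mbf_r^{-1} = \Mbf_l^{-1}\Nbf_l$ together with the Bezout identity. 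Equivalently, this step can be phrased with the transformation machinery of \sec{R-S_lemma-T}: left-multiplying $I - \Rbf$ by the $\RHinf$-invertible matrix $\mat{\Mbf_l & O \\ O & I}$ yields an equivalent realization whose stability matrix differs from $\Sbf$ only by an $\RHinf$-invertible factor, so stability can be checked on the coprime factors alone.

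The necessity direction --- that \emph{every} proper internally stabilizing $\Kbf$ has the stated form for some $\Qbf \in \RHinf$ --- is the genuine content of the theorem and the main obstacle. The approach is the classical one: take a right coprime factorization of $\Kbf$ over $\RHinf$, use internal stability of the loop to show that the relevant combination of the coprime factors of $\Gbf$ and $\Kbf$ is invertible in $\RHinf$, and then define $\Qbf$ as the explicit combination of $\{\Mbf_l, \Nbf_l, \Ubf_r, \Vbf_r\}$ and the factors of $\Kbf$ dictated by the double Bezout identity, checking $\Qbf \in \RHinf$ and that it reproduces $\Kbf$. Because this is precisely \cite[Theorem~11.6]{zhou1998essentials} once the feasibility conditions \eqn{R-S-condition} have been identified with classical internal stability, I would invoke that theorem here rather than redo its proof; the only part specific to this paper is the identification carried out in the first paragraph, namely that \lem{R-S} together with \eqn{R-S-condition} applied to $\Rbf = \mat{O & \Gbf \\ \Kbf & O}$ recovers exactly the classical internal-stability requirement. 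I expect the verification $\Qbf \in \RHinf$ to be the most delicate point, since it is what forces the use of both halves of the double coprime factorization.
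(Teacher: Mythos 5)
Your proposal is correct in substance and, for the forward direction ($\Qbf \mapsto \Kbf$), essentially reproduces the paper's argument: the paper also multiplies the Bezout identity \eqn{coprime-factorization} by an $\RHinf$-invertible transformation (its $\Tbf$ is your factor $\mat{I & O\\ -\Qbf & I}$ composed with a block-diagonal $\RHinf$-invertible scaling), identifies the result with $(I-\Rbf)\Sbf = I$ for $\Rbf = \mat{O & \Gbf\\ \Kbf & O}$, and reads $\Kbf$ off the $(2,1)$ block while concluding $\Sbf \in \RHinf$ because the transformation stays in $\RHinf$. Where you diverge is the converse, which you flag as ``the genuine content'' and then outsource to \cite[Theorem 11.6]{zhou1998essentials} via a coprime factorization of $\Kbf$. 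The paper does not take that route: it reads $\Qbf$ directly off the stability matrix, noting that internal stability forces $\Sbf_{\ux} \in \RHinf$ and that the forward computation gives $\Sbf_{\ux} = (\Vbf_r - \Mbf_r\Qbf)\Mbf_l$, so one can simply set $\Qbf = \Mbf_r^{-1}(\Vbf_r - \Sbf_{\ux}\Mbf_l^{-1})$ and conclude $\Qbf \in \RHinf$ from the paper's standing assumption (stated after \eqn{coprime-factorization}) that $\Mbf_l$ and $\Mbf_r$ are invertible in $\RHinf$. Thus the step you expected to be most delicate is dispatched in one line by \lem{R-S}, with no coprime factorization of $\Kbf$ needed; since the stated purpose of the corollary is precisely to give a self-contained rederivation of Youla from the realization-stability framework, invoking the classical theorem for the surjectivity half forgoes the main point, even though it yields a logically valid proof of the statement. (Both you and the paper gloss over properness of $(\Ubf_r - \Nbf_r\Qbf)^{-1}$, so no demerit there.)
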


\begin{proof}
Consider the realization in \fig{realization-G-K}, which has
\begin{align*}
\Rbf = \mat{O & \Gbf \\ \Kbf & O}, \quad \etabf = \mat{\xbf\\ \ubf}.
\end{align*}

To show that all $\Kbf$ can be parameterized by $\Qbf \in \RHinf$, we need to show that each $\Qbf$ is mapped to one valid $\Kbf$ and vice versa. For mapping $\Qbf$ to $\Kbf$, we consider the following transformation:
\begin{align*}
\Tbf^{-1} =&
\mat{
\Mbf_l^{-1} & O\\
O & (\Ubf_l -\Qbf\Nbf_l)^{-1}\\
}
\mat{
I & O\\
\Qbf & I
}\\
\Tbf =&
\mat{
I & O\\
-\Qbf & I
}
\mat{
\Mbf_l & O\\
O & \Ubf_l -\Qbf\Nbf_l \\
}
\end{align*}
As such,
\begin{align*}
I =&\ \Tbf^{-1} \mat{
\Mbf_l & -\Nbf_l\\
-\Vbf_l & \Ubf_l
}\mat{
\Ubf_r & \Nbf_r\\
\Vbf_r & \Mbf_r
}\Tbf\\
=&
\mat{I & -\Gbf \\ -\Kbf & I}
\mat{
(\Ubf_r - \Nbf_r \Qbf) \Mbf_l & \Sbf_{\yu}\\
(\Vbf_r - \Mbf_r \Qbf) \Mbf_l & \Sbf_{\uu}
} = (I -\Rbf)\Sbf
\end{align*}
where $\Sbf_{\yu}$ and $\Sbf_{\uu}$ are given by \eqn{u-column} and $\Gbf = \Mbf_l^{-1}\Nbf_l$:
\begin{align*}
\mat{\Sbf_{\ybf\ubf} \\ \Sbf_{\ubf\ubf}} =&
\mat{O\\ I} +
\( \mat{\Ubf_r \\ \Vbf_r} -  \mat{\Nbf_r \\ \Mbf_r} \Qbf \) \Mbf_l \Gbf\\
=&
\mat{O\\ I} +
\( \mat{\Ubf_r \\ \Vbf_r} -  \mat{\Nbf_r \\ \Mbf_r} \Qbf \) \Nbf_l.
\end{align*}

Since $\Tbf \in \RHinf$, we have $\Sbf \in \RHinf$. Therefore,
\begin{align*}
-\Kbf (\Ubf_r - \Nbf_r \Qbf) + (\Vbf_r - \Mbf_r \Qbf) = O,
\end{align*}
which leads to the desired $\Kbf$.

On the other hand, for mapping $\Kbf$ to $\Qbf$, internal stability of $\Kbf$ implies the corresponding $\Sbf_{\ux} \in \RHinf$. We compute $\Qbf$ by
\begin{align*}
\Qbf = \Mbf_r^{-1} (\Vbf_r - \Sbf_{\ux}\Mbf_l^{-1}),
\end{align*}
which is also in $\RHinf$ as $\Mbf_l$ and $\Mbf_r$ are both invertible in $\RHinf$ (i.e., $\Mbf_l^{-1}, \Mbf_r^{-1} \in \RHinf$), and all elements in $\Sbf$ can be expressed in $\Qbf$ using \lem{R-S}.
\end{proof}

\def\IOPqple{\{\Ybf,\Ubf,\Wbf,\Zbf\}}
\subsection{Input-Output Parametrization (IOP)}\label{sec:controller_synthesis-IOP}
Inspired by the system level approach in \cite{wang2019system}, \cite{furieri2019input} revisits the input-output system studied by Youla parameterization and proposes IOP as follows that does not depend on the doubly coprime factorization \cite[Theorem 1]{furieri2019input}.

\begin{corollary}\label{cor:IOP}
For the realization in \fig{realization-G-K} with $\Gbf \in \Rsp$, the set of all proper internally stabilizing controller is parameterized by $\IOPqple$ that lies in the affine subspace defined by the equations
\begin{align*}
\mat{I & -\Gbf}
\mat{\Ybf & \Wbf\\ \Ubf & \Zbf}
=&
\mat{I & O},\\
\mat{\Ybf & \Wbf\\ \Ubf & \Zbf}
\mat{-\Gbf\\ I}
=&
\mat{O\\I},\\
\Ybf, \Ubf, \Wbf, \Zbf \in&\ \RHinf,
\end{align*}
and the controller is given by $\Kbf = \Ubf \Ybf^{-1}$.
\end{corollary}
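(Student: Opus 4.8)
The plan is to instantiate \lem{R-S} on the plant--controller realization of \fig{realization-G-K} and recognize the affine equations in the statement as precisely the block identities that \lem{R-S} forces on the stability matrix. Take $\etabf = \mat{\ybf \\ \ubf}$ and
\[
\Rbf = \mat{O & \Gbf \\ \Kbf & O},
\]
so that \eqn{R-S-condition} reduces to $\Gbf \in \Rp$ (immediate from $\Gbf \in \Rsp$) and $\Kbf \in \Rp$; write the stability matrix in blocks as $\Sbf = \mat{\Ybf & \Wbf \\ \Ubf & \Zbf}$, which is the intended correspondence $\IOPqple \leftrightarrow \Sbf$.

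First I would expand the two identities of \lem{R-S}. From $(I-\Rbf)\Sbf = I$, the first block row gives $\mat{I & -\Gbf}\Sbf = \mat{I & O}$ and the second block row gives $\Ubf = \Kbf\Ybf$ and $\Zbf = I + \Kbf\Wbf$; from $\Sbf(I-\Rbf) = I$, the second block column gives $\Sbf\mat{-\Gbf \\ I} = \mat{O \\ I}$. The two displayed identities are exactly the affine subspace in the statement, and $\Sbf \in \RHinf$ is exactly $\Ybf, \Ubf, \Wbf, \Zbf \in \RHinf$. The forward direction is then essentially immediate: an internally stabilizing proper $\Kbf$ has a stability matrix $\Sbf \in \RHinf$, whose blocks therefore satisfy the affine equations and lie in $\RHinf$; and combining $\Ubf = \Kbf\Ybf$ with $\Ybf - \Gbf\Ubf = I$ yields $(I - \Gbf\Kbf)\Ybf = I$, so $\Ybf^{-1} = I - \Gbf\Kbf \in \Rp$ and hence $\Kbf = \Ubf\Ybf^{-1}$.

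For the converse I would start from an arbitrary quadruple in the affine subspace with entries in $\RHinf$, define $\Kbf := \Ubf\Ybf^{-1}$, and show it is a proper internally stabilizing controller. Well-posedness first: since $\Gbf$ is strictly proper and $\Ubf$ proper, $\Ybf = I + \Gbf\Ubf$ is nonsingular at infinity, so $\Ybf^{-1} \in \Rp$ and $\Kbf \in \Rp$. Then I would check that $\mat{\Ybf & \Wbf \\ \Ubf & \Zbf}$ is the stability matrix of $\Rbf = \mat{O & \Gbf \\ \Kbf & O}$: by \lem{R-S} it suffices to verify $(I-\Rbf)\mat{\Ybf & \Wbf \\ \Ubf & \Zbf} = I$ block by block, where the $(1,1)$ and $(1,2)$ blocks are the first affine equation, the $(2,1)$ block is $-\Ubf\Ybf^{-1}\Ybf + \Ubf = O$, and the $(2,2)$ block is $-\Ubf\Ybf^{-1}\Wbf + \Zbf$, which equals $-\Ubf\Gbf + \Zbf = I$ after substituting $\Ybf^{-1}\Wbf = \Gbf$ from $\Wbf = \Ybf\Gbf$. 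Since $I - \Rbf$ is square, this gives $\Sbf = (I-\Rbf)^{-1} = \mat{\Ybf & \Wbf \\ \Ubf & \Zbf} \in \RHinf$, so \eqn{R-S-condition} holds and $\Kbf$ is internally stabilizing.

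I expect the only genuine obstacle to be the invertibility/well-posedness bookkeeping around $\Ybf$: one must argue that $\Ybf$ is invertible over $\Rp$ in both directions. In the forward direction this is free from $\Ybf^{-1} = I - \Gbf\Kbf$; in the converse it is precisely where the hypothesis $\Gbf \in \Rsp$ is used, so that $\Ybf = I + \Gbf\Ubf$ has a proper inverse. Everything else is routine block-matrix algebra on the two identities supplied by \lem{R-S} (together with \lem{dependency}, which gives the relations $\Wbf = \Ybf\Gbf$, $\Zbf = I + \Ubf\Gbf$ directly from $\Rbf_{\uu} = \Rbf_{\yy} = O$).
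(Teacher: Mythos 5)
Your proposal is correct and follows essentially the same route as the paper: instantiate \lem{R-S} on the realization of \fig{realization-G-K}, identify $\IOPqple$ with the blocks of $\Sbf$ so that the affine equations and the $\RHinf$ condition are exactly \eqn{R-S-condition}, and use $\Gbf \in \Rsp$ to show $\Ybf = I + \Gbf\Ubf$ is invertible in $\Rp$. The only cosmetic difference is that you argue invertibility of $\Ybf$ via nonsingularity at infinity where the paper writes out the Neumann series $\Ybf^{-1} = I + \sum_{k\geq 1}(zI-\Lambda)^{-k}\Jbf^{k}$; your more explicit block-by-block verification of the converse is a welcome elaboration of the paper's terse ``direct consequence'' claim.
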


\begin{proof}
We can write down the realization matrix in~\fig{realization-G-K}:
\begin{align*}
\Rbf = \mat{O & \Gbf \\ \Kbf & O}, \quad \etabf = \mat{\ybf \\ \ubf}.
\end{align*}

Given $\Kbf$, the derivation of $\IOPqple$ is a direct consequence of \lem{R-S} and condition \eqn{R-S-condition}, which suggest
\begin{align}
I =&\ (I-\Rbf)\Sbf = \mat{I & -\Gbf \\ -\Kbf & I}
\mat{\Ybf & \Wbf\\ \Ubf & \Zbf} \label{eqn:IOP}\\
=&\ \Sbf(I-\Rbf) =
\mat{\Ybf & \Wbf\\ \Ubf & \Zbf}\mat{I & -\Gbf \\ -\Kbf & I}, \nonumber\\
&\ \Ybf, \Ubf, \Wbf, \Zbf \in \RHinf. \nonumber
\end{align}

Conversely, given $\IOPqple$, \eqn{IOP} implies
\begin{align*}
\Ubf = \Kbf \Ybf \quad\Rightarrow\quad
\Kbf = \Ubf \Ybf^{-1}.
\end{align*}
We need to verify that $\Ybf$ is invertible in $\Rp$ so that $\Kbf \in \Rp$. Given $\Gbf \in \Rsp$, we know that
\begin{align*}
\Ybf = I + \Gbf \Ubf = I + (zI-\Lambda)^{-1} \Jbf.
\end{align*}
for some matrix $\Lambda$ and $\Jbf \in \Rp$. As a result,
\begin{align*}
\Ybf^{-1} = I + \sum\limits_{k\geq 1}^{\infty} (zI-\Lambda)^{-k}\Jbf^k \in \Rp,
\end{align*}
which concludes the proof.
\end{proof}

\cite{szabo2021generalised} also provides an alternative proof to the corollary that differs from \cite{furieri2019input} and the simple proof here.

\subsection{System Level Parametrization/Synthesis (SLP/SLS)}\label{sec:controller_synthesis-SLS}
System level synthesis (SLS) uses system level parameterization (SLP) to parameterize internally stabilizing controllers. There are two SLPs: for state-feedback and output-feedback systems, respectively. We discuss them below.

\begin{figure}
\centering
\includegraphics[scale=1]{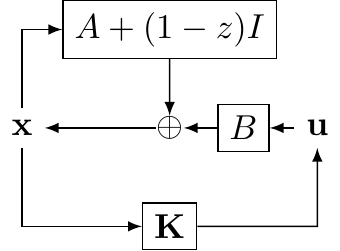}
\caption{The realization of a state-feedback system with controller $\Kbf$. The internal signals are state $\xbf$ and control $\ubf$.}
\label{fig:realization-state-feedback}
\end{figure}

\renewcommand{\paragraph}[1]{\vspace*{0.5\baselineskip}\noindent\textbf{#1}}

\paragraph{State-Feedback:} The following state-feedback parameterization is given in \cite[Theorem 1]{wang2019system} and \cite[Theorem 4.1]{anderson2019system}.

\begin{corollary}\label{cor:sf-SLS}
For the realization in~\fig{realization-state-feedback}, the set of all proper internally stabilizing state-feedback controller is parameterized by $\SFpair$ that lies in the affine space defined by
\begin{align*}
\mat{zI-A & -B}\mat{\Phibf_{\xbf}\\ \Phibf_{\ubf}} = I,\\
\Phibf_{\xbf}, \Phibf_{\ubf}\in z^{-1}\RHinf,
\end{align*}
and the controller is given by $\Kbf = \Phibf_{\ubf} \Phibf_{\xbf}^{-1}$.
\end{corollary}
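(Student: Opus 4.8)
The plan is to specialize \lem{R-S} and condition \eqn{R-S-condition} to the state-feedback realization of \fig{realization-state-feedback} and then establish the claimed parameterization in both directions, mirroring the proof of \cor{IOP}. Reading the realization off \fig{realization-state-feedback} with $\etabf = \mat{\xbf\\ \ubf}$ gives $\Rbf = \mat{z^{-1}A & z^{-1}B\\ \Kbf & O}$, i.e., $\xbf = z^{-1}(A\xbf + B\ubf) + \dbf_\xbf$ and $\ubf = \Kbf\xbf + \dbf_\ubf$. Because the plant delay $z^{-1}$ sits between the disturbance on $\xbf$ and the signals, I would first pass to an equivalent system (\sec{R-S_lemma-T}) through $\dbf = \Tbf\wbf$ with $\Tbf = \mat{z^{-1}I & O\\ O & I}$; then $I - \Rbf_{eq} = \Tbf^{-1}(I-\Rbf) = \mat{zI - A & -B\\ -\Kbf & I}$ and $\Sbf_{eq} = \Sbf\Tbf$, and I identify $\mat{\Phibf_\xbf\\ \Phibf_\ubf}$ with the $\wbf_\xbf$-column of $\Sbf_{eq}$, which equals $z^{-1}\Sbf_{:,\xbf}$.

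For the forward direction, assume $\Kbf \in \Rp$ internally stabilizes, so $\Sbf$ exists; \lem{R-S} gives $(I-\Rbf)\Sbf = I$, hence $(I-\Rbf_{eq})\Sbf_{eq} = \Tbf^{-1}(I-\Rbf)\Sbf\Tbf = I$, and \eqn{R-S-condition} gives $\Sbf \in \RHinf$, so $\mat{\Phibf_\xbf\\ \Phibf_\ubf} = z^{-1}\Sbf_{:,\xbf} \in z^{-1}\RHinf$. Restricting $(I-\Rbf_{eq})\Sbf_{eq} = I$ to its $\wbf_\xbf$-column yields $\mat{zI-A & -B}\mat{\Phibf_\xbf\\ \Phibf_\ubf} = I$ from the top block and $-\Kbf\Phibf_\xbf + \Phibf_\ubf = O$ from the bottom block, so $\Kbf = \Phibf_\ubf\Phibf_\xbf^{-1}$ ($\Phibf_\xbf$ is invertible because the affine equation forces $z\Phibf_\xbf$ to equal $I$ at $z=\infty$).

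For the converse, start from $\SFpair$ in the stated affine space and set $\Kbf = \Phibf_\ubf\Phibf_\xbf^{-1}$. Writing $\Phibf_\xbf = z^{-1}\hat{\Phibf}_\xbf$ and $\Phibf_\ubf = z^{-1}\hat{\Phibf}_\ubf$ with $\hat{\Phibf}_\xbf,\hat{\Phibf}_\ubf \in \RHinf$, evaluating $\mat{zI-A & -B}\mat{\Phibf_\xbf\\ \Phibf_\ubf} = I$ at $z = \infty$ gives $\hat{\Phibf}_\xbf(\infty) = I$, so $\hat{\Phibf}_\xbf$ is invertible in $\Rp$ and $\Kbf = \hat{\Phibf}_\ubf\hat{\Phibf}_\xbf^{-1} \in \Rp$ (the same device used for $\Ybf$ in \cor{IOP}). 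Forming $\Rbf$ from this $\Kbf$, a direct substitution of the affine equation shows $(I-\Rbf)\mat{\hat{\Phibf}_\xbf\\ \hat{\Phibf}_\ubf} = e_\xbf$, so $\Sbf_{:,\xbf} = \mat{\hat{\Phibf}_\xbf\\ \hat{\Phibf}_\ubf} \in \RHinf$; since $\Rbf_{\uu} = O$, \lem{dependency} then gives $\Sbf_{:,\ubf} = e_\ubf + \Sbf_{:,\xbf}z^{-1}B \in \RHinf$. Hence $\Sbf \in \RHinf$, while $\Rbf_{\xbf\ubf} = z^{-1}B$ and $\Rbf_{\ubf\xbf} = \Kbf$ are both proper, so \eqn{R-S-condition} holds; by the $\Rbf\leftrightarrow\Sbf$ bijection of \lem{R-S}, $\Kbf$ internally stabilizes with the prescribed responses.

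The step I expect to be the main obstacle is the properness of $\Kbf = \Phibf_\ubf\Phibf_\xbf^{-1}$ in the converse --- showing that stripping the state delay leaves $\hat{\Phibf}_\xbf$ invertible in $\Rp$ --- together with confirming that the reconstructed $\Sbf$ lies in $\RHinf$ in \emph{every} column rather than just the $\xbf$-column, for which \lem{dependency} (via $\Rbf_{\uu} = O$) is exactly the tool that closes the argument.
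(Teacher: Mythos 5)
Your proposal is correct and follows essentially the same route as the paper: apply \lem{R-S} and condition \eqn{R-S-condition} to the realization $I-\Rbf = \mat{zI-A & -B\\ -\Kbf & I}$, identify $\SFpair$ with the $\xbf$-column of the stability matrix, and recover the remaining columns via \eqn{u-column}/\lem{dependency} in the converse. The only differences are cosmetic or additive: you reach the same $I-\Rbf$ through an explicit disturbance transformation $\Tbf = \mathrm{diag}(z^{-1}I, I)$ rather than writing the diagonal block as $A+(1-z)I$ directly, and you supply slightly more detail than the paper on the invertibility of $\Phibf_{\xbf}$ in $\Rp$ (via the $z=\infty$ evaluation) to justify $\Kbf = \Phibf_{\ubf}\Phibf_{\xbf}^{-1} \in \Rp$.
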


\begin{proof}
The realization matrix in~\fig{realization-state-feedback} is
\begin{align}
\Rbf = \mat{
A + (1-z)I & B \\
\Kbf & O
}, \quad \etabf = \mat{\xbf\\ \ubf}.
\label{eqn:sf-SLS-R}
\end{align}

To derive $\Kbf$ from $\SFpair$, \lem{R-S} and condition \eqn{R-S-condition} lead to
\begin{align*}
\mat{
zI-A & -B \\
-\Kbf & I}&
\mat{\Phibf_{\xbf} & \Sbf_{\xu} \\
\Phibf_{\ubf} & \Sbf_{\uu}}
= I,\\
\Kbf \in \Rp,\quad &\Phibf_{\xbf}, \Phibf_{\ubf} \in \RHinf.
\end{align*}
Meanwhile, since
\begin{align*}
(zI-A) \Phibf_{\xbf} = I + B \Phibf_{\ubf} \in \RHinf,
\end{align*}
we have $\Phibf_{\xbf} \in z^{-1}\RHinf$. As a result, given $\Kbf \in \Rp$,
\begin{align*}
\Phibf_{\ubf} = \Kbf \Phibf_{\xbf} = z^{-1} \Kbf (z\Phibf_{\xbf}) \in z^{-1}\Rp,
\end{align*}
we know $\Phibf_{\ubf} \in z^{-1}\Rp \cap \RHinf = z^{-1}\RHinf$.

Conversely, given $\SFpair$, we can derive $\Kbf = \Phibf_{\ubf} \Phibf_{\xbf}^{-1}$ from \lem{R-S}. It remains to show that $\Sbf_{\xu}$ and $\Sbf_{\uu}$ exist whenever $\SFpair$ is given. According to \eqn{u-column}
\begin{align*}
\mat{\Sbf_{\xu}\\ \Sbf_{\uu}} =
\mat{O\\I} + \mat{\Phibf_{\xbf}\\ \Phibf_{\ubf}}B \in \RHinf,
\end{align*}
which concludes the proof.
\end{proof}

\begin{figure}
\centering
\includegraphics[scale=1]{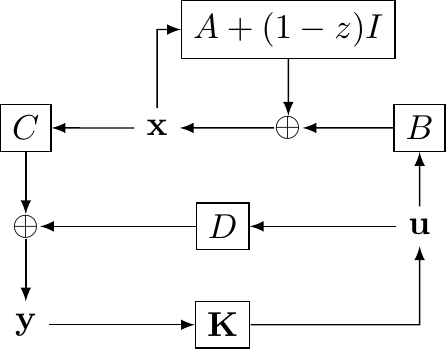}
\caption{The realization of an output-feedback system with controller $\Kbf$. The internal state $\etabf$ consists of state $\xbf$, control $\ubf$, and measurement $\ybf$ signals.}
\label{fig:realization-output-feedback}
\end{figure}

\paragraph{Output-Feedback:} The output-feedback SLP below is from \cite[Theorem 2]{wang2019system} and \cite[Theorem 5.1]{anderson2019system}.

\begin{corollary}\label{cor:of-SLS-zero-D}
For the realization in~\fig{realization-output-feedback} with $D = O$, the set of all proper internally stabilizing output-feedback controller is parameterized by $\OFqple$ that lies in the affine space defined by
\begin{subequations}\label{eqn:of-SLS-constraints}
\begin{align}
\mat{
zI - A & -B
}
\mat{
\Phibf_{\xx} & \Phibf_{\xy}\\
\Phibf_{\ux} & \Phibf_{\uy}
}
=&
\mat{I & O},
\label{eqn:of-SLS-constraints-1}
\\
\mat{
\Phibf_{\xx} & \Phibf_{\xy}\\
\Phibf_{\ux} & \Phibf_{\uy}
}
\mat{
zI - A \\ -C
}
=&
\mat{I \\ O},\nonumber \\
\Phibf_{\xx}, \Phibf_{\xy}, \Phibf_{\ux} \in z^{-1}\RHinf,&\ \quad \Phibf_{\uy} \in \RHinf,
\label{eqn:of-SLS-constraints-3}
\end{align}
\end{subequations}
and the controller is given by
\begin{align*}
\Kbf = \Phibf_{\uy} - \Phibf_{\ux} \Phibf_{\xx}^{-1} \Phibf_{\xy}.
\end{align*}

\end{corollary}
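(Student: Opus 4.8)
The plan is to mirror the state-feedback argument of \cor{sf-SLS}. From \fig{realization-output-feedback} with $D = O$, the internal state is $\etabf = \mat{\xbf \\ \ubf \\ \ybf}$, the realization matrix $\Rbf$ and $I-\Rbf$ are
\begin{align*}
\Rbf = \mat{A + (1-z)I & B & O \\ O & O & \Kbf \\ C & O & O},
\qquad
I - \Rbf = \mat{zI-A & -B & O \\ O & I & -\Kbf \\ -C & O & I},
\end{align*}
and I identify $\{\Phibf_{\xx},\Phibf_{\xy},\Phibf_{\ux},\Phibf_{\uy}\}$ with the blocks $\{\Sbf_{\xx},\Sbf_{\xy},\Sbf_{\ux},\Sbf_{\uy}\}$ of $\Sbf$, i.e.\ the submatrix of $\Sbf$ on rows $\{\xbf,\ubf\}$ and columns $\{\xbf,\ybf\}$.

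For the forward direction, given an internally stabilizing $\Kbf$, \lem{R-S} and \eqn{R-S-condition} yield $(I-\Rbf)\Sbf = \Sbf(I-\Rbf) = I$ with $\Sbf\in\RHinf$. Extracting the $\xbf$-block-row of $(I-\Rbf)\Sbf = I$ and the $\xbf$-block-column of $\Sbf(I-\Rbf) = I$ (and keeping the relevant blocks) gives the two affine identities of \eqn{of-SLS-constraints}. Membership $\Phibf_{\uy}\in\RHinf$ is immediate, and $\Phibf_{\xx},\Phibf_{\xy},\Phibf_{\ux}\in z^{-1}\RHinf$ follows exactly as in \cor{sf-SLS}: since $(zI-A)\Phibf_{\xx} = I + B\Phibf_{\ux}\in\RHinf$, $(zI-A)\Phibf_{\xy} = B\Phibf_{\uy}\in\RHinf$, and $\Phibf_{\ux}(zI-A) = \Phibf_{\uy}C\in\RHinf$, the $zI-A$ factor forces strict properness. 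Finally, the $\ubf$- and $\ybf$-block-rows of $(I-\Rbf)\Sbf = I$ give $\Sbf_{\yx} = C\Phibf_{\xx}$, $\Sbf_{\yy} = I + C\Phibf_{\xy}$, $\Phibf_{\ux} = \Kbf C\Phibf_{\xx}$, and $\Phibf_{\uy} = \Kbf(I+C\Phibf_{\xy})$; substituting the last two into $\Phibf_{\uy} - \Phibf_{\ux}\Phibf_{\xx}^{-1}\Phibf_{\xy}$ telescopes to $\Kbf$.

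For the converse, given $\{\Phibf_{\xx},\Phibf_{\xy},\Phibf_{\ux},\Phibf_{\uy}\}$ in the stated affine space, I set $\Kbf := \Phibf_{\uy} - \Phibf_{\ux}\Phibf_{\xx}^{-1}\Phibf_{\xy}$ and rebuild $\Sbf$: take $\Sbf_{\yx} = C\Phibf_{\xx}$, $\Sbf_{\yy} = I + C\Phibf_{\xy}$, and apply \lem{dependency} with $\Rbf_{\uu} = O$ to get the $\ubf$-column $\Sbf_{:,\ubf} = e_{\ubf} + \Sbf_{:,\xbf}B$, i.e.\ $\Sbf_{\xu} = \Phibf_{\xx}B$, $\Sbf_{\uu} = I + \Phibf_{\ux}B$, $\Sbf_{\yu} = C\Phibf_{\xx}B$. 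Every block is assembled from $\RHinf$ quantities, so $\Sbf\in\RHinf$. Then I verify $(I-\Rbf)\Sbf = I$ block-by-block: the $\xbf$-rows are \eqn{of-SLS-constraints-1} (with a cancellation in the $\ubf$-column), the $\ybf$-rows hold by the choices of $\Sbf_{\yx},\Sbf_{\yy},\Sbf_{\yu}$, and the $\ubf$-rows reduce to $\Phibf_{\ux} = \Kbf C\Phibf_{\xx}$ and $\Phibf_{\uy} = \Kbf(I+C\Phibf_{\xy})$, which follow from the definition of $\Kbf$ combined with the right-multiplication constraints $\Phibf_{\ux}(zI-A) = \Phibf_{\uy}C$ and $\Phibf_{\xx}(zI-A) = I + \Phibf_{\xy}C$ (and, for the second identity, also $(zI-A)\Phibf_{\xy} = B\Phibf_{\uy}$). \lem{R-S} then supplies $\Sbf(I-\Rbf) = I$ for free, so this $\Kbf$ is internally stabilizing and realizes the prescribed maps.

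The main obstacle is the converse, on two counts. First, I must show $\Kbf\in\Rp$ even though $\Phibf_{\xx}^{-1}$ is improper: writing $\Phibf_{\xx} = z^{-1}\tilde\Phibf_{\xx}$, and similarly for $\Phibf_{\ux},\Phibf_{\xy}$, with all tildes in $\RHinf$, the relation $(zI-A)\Phibf_{\xx} = I + B\Phibf_{\ux}$ together with $\Phibf_{\ux}\in z^{-1}\RHinf$ forces $\tilde\Phibf_{\xx}(\infty) = I$, hence $\tilde\Phibf_{\xx}^{-1}\in\Rp$ and $\Phibf_{\ux}\Phibf_{\xx}^{-1}\Phibf_{\xy} = z^{-1}\tilde\Phibf_{\ux}\tilde\Phibf_{\xx}^{-1}\tilde\Phibf_{\xy}$ is strictly proper, so $\Kbf = \Phibf_{\uy} - (\text{strictly proper})\in\Rp$ and all off-diagonal blocks of $\Rbf$ are proper. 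Second, the block-by-block verification that this particular $\Kbf$ reproduces the given $\Phibf$'s is the bookkeeping-heavy step, and it is essential there to use \emph{both} affine constraint sets rather than just one.
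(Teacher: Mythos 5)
Your proof is correct, and the forward direction coincides with the paper's: the same realization matrix (the paper's with $D=O$), the same extraction of the two affine identities from $(I-\Rbf)\Sbf = I$ and $\Sbf(I-\Rbf)=I$, and the same three products $(zI-A)\Phibf_{\xx}$, $(zI-A)\Phibf_{\xy}$, $\Phibf_{\ux}(zI-A)$ to force the $z^{-1}\RHinf$ memberships. Where you diverge is the converse. The paper proves the general-$D$ statement (\cor{of-SLS}) and obtains \cor{of-SLS-zero-D} as the special case: it left-multiplies \eqn{of-SLS} by a matrix $\Gamma$ to collapse the $3\times 3$ block system to a $2\times 2$ one in $(\xbf,\ybf)$, inverts to get $I - D\Kbf = \left(\Sbf_{\yy} - \Sbf_{\yx}\Phibf_{\xx}^{-1}\Phibf_{\xy}\right)^{-1}$, and then solves $\Kbf_0 - \Kbf D \Kbf_0 = \Kbf$; the existence of $\Sbf$ is checked through \eqn{of-SLS-S-1} and \eqn{of-SLS-S-2}, which are exactly your formulas $\Sbf_{\yx}=C\Phibf_{\xx}$, $\Sbf_{\yy}=I+C\Phibf_{\xy}$ and the \lem{dependency} column. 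You instead build the full $\Sbf$ from the given quadruple and verify $(I-\Rbf)\Sbf=I$ block by block, reducing the $\ubf$-rows to $\Phibf_{\ux}=\Kbf C\Phibf_{\xx}$ and $\Phibf_{\uy}=\Kbf(I+C\Phibf_{\xy})$, which indeed follow from the right-multiplication constraints (your algebra checks out: substituting $\Phibf_{\uy}C = \Phibf_{\ux}(zI-A)$ and $\Phibf_{\xy}C = \Phibf_{\xx}(zI-A)-I$ telescopes both identities). This buys a more elementary, self-contained argument for $D=O$ at the cost of not covering general $D$, and it avoids the matrix-inverse manipulations. Your explicit verification that $\Kbf\in\Rp$ — via $\tilde{\Phibf}_{\xx}(\infty)=I$ so that $\Phibf_{\ux}\Phibf_{\xx}^{-1}\Phibf_{\xy}$ is strictly proper — is a genuine addition: the paper's output-feedback proof asserts properness of $\Kbf$ without spelling this step out, so your version is actually more complete on that point.
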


In fact, we can extend \cor{of-SLS-zero-D} to general $D$.
\begin{corollary}\label{cor:of-SLS}
Given $\OFqple$ that lies in the affine space in \cor{of-SLS-zero-D} and an arbitrary $D$, the proper internally stabilizing output-feedback controller $\Kbf$ is given by
\begin{align*}
\Kbf = \Kbf_0 \left( I + D \Kbf_0 \right)^{-1}
\end{align*}
where
$\Kbf_0 = \Phibf_{\uy} - \Phibf_{\ux} \Phibf_{\xx}^{-1} \Phibf_{\xy}$.
\end{corollary}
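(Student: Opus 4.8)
The plan is to reduce Corollary~\ref{cor:of-SLS} to Corollary~\ref{cor:of-SLS-zero-D} by introducing the realization that includes the feedthrough term $D$ and then absorbing $D$ into an equivalent realization with $D = O$. Concretely, I would first write down the realization matrix for the output-feedback system of \fig{realization-output-feedback} \emph{with} general $D$, using $\etabf = \mat{\xbf\\ \ubf\\ \ybf}$. The only difference from the $D = O$ case is the measurement equation $\ybf = C\xbf + D\ubf + \dbf_\ybf$, so the $\Rbf$ block acquires a nonzero $\Rbf_{\yu} = D$ entry while $\Rbf_{\ux} = O$, $\Rbf_{\uu} = O$ are unchanged and $\ubf = \Kbf \ybf$.

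Next I would exhibit $(I - \Rbf)$ for the general-$D$ system as $\Tbf^{-1}(I-\Rnm)$ for the $D = O$ nominal realization $\Rnm$, identifying the transformation $\Tbf$ that only acts on the $\ybf$-block: since the closed loop sees $\ybf = C\xbf + D\Kbf\ybf$, i.e. $(I - D\Kbf)\ybf = C\xbf + \dbf_\ybf$, the extra factor is $\Tbf = \mathrm{diag}(I, I, (I - D\Kbf)^{-1})$ (or its inverse, depending on side), which is exactly the disturbance-transformation/equivalent-system construction of \sec{R-S_lemma-T}. Then by \lem{R-S} the two systems share the same $\SFpair$-type parameters in the $\xbf,\ubf$ rows, and in particular the $D=O$ parameterization $\OFqple$ describes precisely the equivalent nominal system; applying \cor{of-SLS-zero-D} to that nominal system yields a controller $\Kbf_0 = \Phibf_{\uy} - \Phibf_{\ux}\Phibf_{\xx}^{-1}\Phibf_{\xy}$ that stabilizes the $D=O$ plant. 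Finally I would translate $\Kbf_0$ back: the controller that produces the same closed-loop behavior for the true plant with feedthrough $D$ is obtained by solving $\ubf = \Kbf_0(C\xbf + D\ubf)$ against $\ubf = \Kbf(C\xbf)$, giving $\Kbf = \Kbf_0(I + D\Kbf_0)^{-1}$; I would also check the inverse exists in $\Rp$ (it does, because $\Kbf_0 \in \Rp$ and, as in the IOP proof, the relevant Neumann-type series converges — or one simply notes $\Gbf$ with feedthrough $D$ combined with $\Kbf_0 \in \Rp$ yields a well-posed loop since at least one of the two maps around the loop is strictly proper through the $A$-dynamics) and that the resulting closed loop is internally stable, which is inherited from stability of the $D=O$ system via the equivalent-system relation $\Sbf_2 = \Sbf_1 \Tbf$ with $\Tbf \in \RHinf$.

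I expect the main obstacle to be bookkeeping the transformation $\Tbf$ cleanly and verifying that $I + D\Kbf_0$ (equivalently $I - D\Kbf$) is invertible in $\Rp$ so that $\Kbf \in \Rp$ and the feedback path $\Tbf$ is itself in $\RHinf$; well-posedness of the feedthrough loop is the one place where properness rather than just existence must be argued, analogously to the $\Ybf^{-1} \in \Rp$ step in \cor{IOP}. Once that is settled, the equivalence of the parameterizations and the internal-stability claim follow directly from \lem{R-S} and the equivalent-system discussion without further computation.
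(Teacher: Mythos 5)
Your overall strategy---reducing the general-$D$ case to \cor{of-SLS-zero-D} by viewing the two closed loops as equivalent systems related by a disturbance transformation---is genuinely different from the paper's proof. The paper never passes through the $D=O$ corollary: it writes the realization of \fig{realization-output-feedback} with $D$ in place, applies \lem{R-S} to get the full $3\times 3$ block identity, left-multiplies by $\Gamma = \mat{I & B & O\\ O & I & O\\ O & D & I}$ to eliminate the $\ubf$ row and obtain $\mat{zI-A & -B\Kbf\\ -C & I-D\Kbf}\mat{\Phibf_{\xx} & \Phibf_{\xy}\\ \Sbf_{\yx} & \Sbf_{\yy}} = I$, identifies $I - D\Kbf$ with the inverse of the Schur complement $\Sbf_{\yy}-\Sbf_{\yx}\Phibf_{\xx}^{-1}\Phibf_{\xy}$, and then uses $\Phibf_{\ux}=\Kbf\Sbf_{\yx}$, $\Phibf_{\uy}=\Kbf\Sbf_{\yy}$ to get $\Kbf_0 = \Kbf(I-D\Kbf)^{-1}$ and hence the stated formula; existence and stability of the remaining blocks of $\Sbf$ then come from \eqn{u-column}.

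Your route, as written, has two concrete gaps. First, the transformation you propose does not exist in the claimed form. Writing $\Rbf_0$ for the $D=O$ realization with controller $\Kbf_0$ and $\Rbf$ for the general-$D$ realization with controller $\Kbf$, the relating matrix $\Tbf^{-1}=(I-\Rbf_0)(I-\Rbf)^{-1}$ is forced, and it cannot be block-diagonal: left-multiplying the $\ybf$-row $\mat{-C & O & I}$ of $I-\Rbf_0$ by any $N$ gives $\mat{-NC & O & N}$, which can never produce the required $\mat{-C & -D & I}$ when $D\neq O$. The actual $\Tbf^{-1}$ is dense (its off-diagonal blocks involve $D\Phibf_{\ux}$, $D\Phibf_{\uy}$, and $D\Sbf_{\uu}$), so the assertion that the two systems ``share the same parameters'' does not follow from $\Sbf=\Sbf_0\Tbf$; a nontrivial $\Tbf$ rescales columns of the stability matrix, and verifying that the four blocks $\Phibf_{\xx},\Phibf_{\ux},\Phibf_{\xy},\Phibf_{\uy}$ happen to be preserved (they are) amounts to redoing the paper's direct computation, so the reduction buys nothing. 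Second, your matching equations swap the roles of the two controllers: it is $\Kbf$, not $\Kbf_0$, that sees the feedthrough, so the correct pair is $\ubf=\Kbf(C\xbf+D\ubf)$ versus $\ubf=\Kbf_0 C\xbf$; the pair you wrote yields $\Kbf=\Kbf_0(I-D\Kbf_0)^{-1}$, which has the wrong sign. Your worry about invertibility of $I+D\Kbf_0$ in $\Rp$ is legitimate but is handled implicitly in the paper through the invertibility of $\Phibf_{\xx}$ and the Schur-complement identity rather than through a separate Neumann-series argument.
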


We prove the more general version -- \cor{of-SLS} -- below.

\begin{proof}
The realization matrix in~\fig{realization-output-feedback} is
\begin{align*}
\Rbf = \mat{
A + (1-z)I & B & O \\
O & O & \Kbf \\
C & D & O
}, \quad \etabf = \mat{\xbf\\ \ubf\\ \ybf}.
\end{align*}

Given $\Kbf$ we can directly derive $\OFqple$ from \lem{R-S}
\begin{align}
I =& \mat{
zI-A & -B & O \\
O & I & -\Kbf \\
-C & -D & I
}
\mat{
\Phibf_{\xx} & \Sbf_{\xu} & \Phibf_{\xy} \\
\Phibf_{\ux} & \Sbf_{\uu} & \Phibf_{\uy} \\
\Sbf_{\yx} & \Sbf_{\yu} & \Sbf_{\yy}
} \label{eqn:of-SLS}\\
=& \mat{
\Phibf_{\xx} & \Sbf_{\xu} & \Phibf_{\xy} \\
\Phibf_{\ux} & \Sbf_{\uu} & \Phibf_{\uy} \\
\Sbf_{\yx} & \Sbf_{\yu} & \Sbf_{\yy}
}
\mat{
zI-A & -B & O \\
O & I & -\Kbf \\
-C & -D & I
},\nonumber
\end{align}
where $\Sbf \in \RHinf$ by condition \eqn{R-S-condition}. As a result, we have
\begin{align*}
(zI-A)\Phibf_{\xx} =&\ I + B\Phibf_{\ux} \in \RHinf,\\
(zI-A)\Phibf_{\xy} =&\ B\Phibf_{\uy} \in \RHinf,\\
\Phibf_{\ux}(zI-A) =&\ \Phibf_{\uy}C \in \RHinf.
\end{align*}
Therefore, $\Phibf_{\xx},\Phibf_{\ux},\Phibf_{\xy} \in z^{-1}\RHinf$ and $\Phibf_{\uy} \in \RHinf$. 

Conversely, we can derive $\Kbf$ from $\OFqple$ as follows. First, we multiply the matrix
\begin{align*}
\Gamma = \mat{
I & B & O\\
O & I & O\\
O & D & I
} 
\end{align*}
at the left of both sides of \eqn{of-SLS}, which leads to
\begin{align*}
\mat{
zI-A & -B \Kbf \\
-C & I - D \Kbf
}
\mat{
\Phibf_{\xx} & \Phibf_{\xy} \\
\Sbf_{\yx} & \Sbf_{\yy}
} = I.
\end{align*}
Therefore, as $\Phibf_{\xx}$ and $\Sbf_{\yy}$ are both square, taking matrix inverse, we have
\begin{align*}
I - D \Kbf = \left( \Sbf_{\yy} -  \Sbf_{\yx} \Phibf_{\xx}^{-1} \Phibf_{\xy} \right)^{-1}.
\end{align*}
Since $\Phibf_{\ux} = \Kbf \Sbf_{\yx}$ and $\Phibf_{\uy} = \Kbf \Sbf_{\yy}$,
we know
\begin{align*}
\Kbf_0 = \Kbf \left( \Sbf_{\yy} -  \Sbf_{\yx} \Phibf_{\xx}^{-1} \Phibf_{\xy} \right)
\end{align*}
and we can rearrange the equation to obtain
\begin{align*}
\Kbf_0 - \Kbf D \Kbf_0 = \Kbf
\quad \Rightarrow \quad
\Kbf = \Kbf_0 \left( I + D \Kbf_0 \right)^{-1}.
\end{align*}

The last thing we need to verify is that $\Sbf$ exists and is in $\RHinf$. By \lem{R-S}, we know
\begin{align}
\Sbf_{\yx} =&\ C \Phibf_{\xx} + D \Phibf_{\ux} \in \RHinf, \nonumber \\
\Sbf_{\yy} =&\ C \Phibf_{\xy} + D \Phibf_{\uy} + I \in \RHinf. 
\label{eqn:of-SLS-S-1}
\end{align}
and we can compute the rest by \eqn{u-column}
\begin{align}
\mat{\Sbf_{\xu}\\ \Sbf_{\uu}\\ \Sbf_{\yu}} = 
\mat{O\\I\\O} + 
\mat{\Phibf_{\xx}\\ \Phibf_{\ux}\\ \Sbf_{\yx}}B +
\mat{\Phibf_{\xy}\\ \Phibf_{\uy}\\ \Sbf_{\yy}}D \in \RHinf,
\label{eqn:of-SLS-S-2}
\end{align}
which concludes the proof.
\end{proof}

\subsection{Mixed Parameterizations}
Letting $\Gbf = C(zI-A)^{-1}B + D$, \cite[Proposition 3, Proposition 4]{zheng2019system} provides the following corollaries that have conditions in both SLP and IOP flavors.

\begin{corollary}\label{cor:mixed-1}
For the realization in~\fig{realization-output-feedback}, the set of all proper internally stabilizing output-feedback controller is parameterized by $\{\Phibf_{\yx},\Phibf_{\ux},\Phibf_{\yy},\Phibf_{\uy}\}$ that lies in the affine space defined by
\begin{align*}
\mat{
I & -\Gbf
}
\mat{
\Phibf_{\yx} & \Phibf_{\yy}\\
\Phibf_{\ux} & \Phibf_{\uy}
}
=&
\mat{C(zI-A)^{-1} & I},\\
\mat{
\Phibf_{\yx} & \Phibf_{\yy}\\
\Phibf_{\ux} & \Phibf_{\uy}
}
\mat{
zI - A \\ -C
}
=&\
O,\\
\Phibf_{\yx},\Phibf_{\ux},\Phibf_{\yy}&,\Phibf_{\uy} \in \RHinf,
\end{align*}
and the controller is given by
\begin{align*}
\Kbf = \Phibf_{\uy}\Phibf_{\yy}^{-1}.
\end{align*}
\end{corollary}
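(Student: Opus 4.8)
The plan is to follow the same two-direction template used for \cor{IOP} and \cor{of-SLS}: start from the realization matrix of \fig{realization-output-feedback}, but this time pick the internal state to be $\etabf = \mat{\xbf \\ \ybf \\ \ubf}$ (or work with the three-signal realization and project out $\xbf$ from the rows while keeping it as a column source), so that the relevant blocks of $\Sbf$ are exactly $\{\Phibf_{\yx},\Phibf_{\ux},\Phibf_{\yy},\Phibf_{\uy}\}$. The key observation is that here $\xbf$ is treated as an \emph{input-like} signal (its disturbance column is kept, but we read out only $\ybf$ and $\ubf$), which is why the right-hand side of the first equation is $\mat{C(zI-A)^{-1} & I}$ rather than $\mat{I & O}$: the map from a disturbance on $\xbf$ into $\ybf$ picks up the plant's state-to-output factor $C(zI-A)^{-1}$.

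First I would write down $\Rbf$ for \fig{realization-output-feedback} with $\etabf$ ordered as state/measurement/control, substitute into $(I-\Rbf)\Sbf = \Sbf(I-\Rbf) = I$ from \lem{R-S}, and read off the block rows/columns corresponding to $\ybf$ and $\ubf$; using $\Gbf = C(zI-A)^{-1}B + D$ this directly yields the two affine equations in the statement, and condition \eqn{R-S-condition} gives $\Phibf_{\yx},\Phibf_{\ux},\Phibf_{\yy},\Phibf_{\uy} \in \RHinf$. This establishes the ``given $\Kbf$'' direction. For the converse, from the off-diagonal block of $(I-\Rbf)\Sbf = I$ I get the relation $\Phibf_{\ubf\bullet} = \Kbf\,\Phibf_{\ybf\bullet}$ on both the $\xbf$- and $\ybf$-columns, hence $\Phibf_{\uy} = \Kbf\Phibf_{\yy}$, which gives $\Kbf = \Phibf_{\uy}\Phibf_{\yy}^{-1}$ once $\Phibf_{\yy}$ is shown invertible in $\Rp$. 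Invertibility of $\Phibf_{\yy}$ I would argue exactly as in \cor{IOP}: from the constraints $\Phibf_{\yy} = I + (\text{strictly proper})$, so its Neumann series converges in $\Rp$. Finally I would verify that the remaining blocks of $\Sbf$ (the $\ubf$-columns and the $\xbf$-row pieces) exist and lie in $\RHinf$ via \lem{dependency}/\eqn{u-column}, analogous to \eqn{of-SLS-S-1}--\eqn{of-SLS-S-2}, so that $\Sbf \in \RHinf$ and $\Kbf \in \Rp$.

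The main obstacle I anticipate is bookkeeping rather than conceptual: getting the right-hand sides of the two affine equations to come out as stated — in particular the $C(zI-A)^{-1}$ term and the plain $O$ on the second equation — requires being careful about which signal is a ``source'' (column kept, row dropped) versus a genuine internal state, and correctly folding the $D$ feedthrough into $\Gbf$. I would double-check the derivation by specializing to $D = O$ and confirming consistency with \cor{of-SLS-zero-D}, and by verifying that the controller formula $\Kbf = \Phibf_{\uy}\Phibf_{\yy}^{-1}$ reduces appropriately. A secondary subtlety is confirming that the affine constraints indeed force $\Phibf_{\yy}$ to have the $I + (\text{strictly proper})$ form even when $D \neq O$; this should follow because $\Gbf$ is strictly proper in its state part and the $D$ contribution enters $\Phibf_{\yy}$ only through products with the strictly-proper $\Phibf_{\ux}$-type blocks, but it is the one place where I would slow down and check the degrees carefully.
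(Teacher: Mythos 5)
Your proposal is correct and follows essentially the same route as the paper: both apply \lem{R-S} to the three-signal realization of \fig{realization-output-feedback}, eliminate the state row (you by substituting $\Sbf_{\xbf\xbf}$, the paper by left-multiplying $(I-\Rbf)\Sbf=I$ by an explicit row-operation matrix $\Gamma_1$ with $C(zI-A)^{-1}$ in its lower-left block) to obtain the first affine equation, read the second off the $\xbf$- and $\ybf$-columns of $\Sbf(I-\Rbf)=I$, and recover $\Kbf=\Phibf_{\uy}\Phibf_{\yy}^{-1}$ via the Neumann-series invertibility argument of \cor{IOP}. The paper's own proof is only a brief sketch at this point, so your extra attention to the $D\neq O$ feedthrough and to the remaining blocks of $\Sbf$ goes beyond what the paper itself verifies.
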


\begin{corollary}\label{cor:mixed-2}
For the realization in~\fig{realization-output-feedback}, the set of all proper internally stabilizing output-feedback controller is parameterized by $\{\Phibf_{\xy},\Phibf_{\uy},\Phibf_{\xu},\Phibf_{\uu}\}$ that lies in the affine space defined by
\begin{align*}
\mat{
zI-A & -B
}
\mat{
\Phibf_{\xy} & \Phibf_{\xu}\\
\Phibf_{\uy} & \Phibf_{\uu}
}
=&\
O,\\
\mat{
\Phibf_{\yx} & \Phibf_{\yy}\\
\Phibf_{\ux} & \Phibf_{\uy}
}
\mat{
-\Gbf \\ I
}
=&
\mat{
(zI - A)^{-1}B \\ I
},\\
\Phibf_{\xy},\Phibf_{\uy},\Phibf_{\xu},\Phibf_{\uu} \in&\ \RHinf,
\end{align*}
and the controller is given by
\begin{align*}
\Kbf = \Phibf_{\uu}^{-1}\Phibf_{\uy}.
\end{align*}
\end{corollary}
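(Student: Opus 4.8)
The plan is to follow the template already used for \cor{of-SLS-zero-D}, \cor{of-SLS}, and \cor{mixed-1}: read the realization matrix off \fig{realization-output-feedback}, invoke \lem{R-S} together with condition \eqn{R-S-condition}, and establish the two-way correspondence between an internally stabilizing $\Kbf$ and the four parameters $\{\Phibf_{\xy},\Phibf_{\uy},\Phibf_{\xu},\Phibf_{\uu}\}$, which we identify with the blocks of $\Sbf$ in rows $\{\xbf,\ubf\}$ and columns $\{\ybf,\ubf\}$. Concretely, the realization of \fig{realization-output-feedback} has
\begin{align*}
\Rbf = \mat{A + (1-z)I & B & O \\ O & O & \Kbf \\ C & D & O}, \quad \etabf = \mat{\xbf\\ \ubf\\ \ybf},
\end{align*}
exactly as in the proof of \cor{of-SLS}. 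Note that this corollary is the ``dual'' of \cor{mixed-1}: it keeps the state-feedback-flavored $\xbf$-row half of the output-feedback constraints \eqn{of-SLS} and pairs it with an IOP-flavored identity obtained after eliminating the $\xbf$-column blocks of $\Sbf$ in favor of $\Gbf = C(zI-A)^{-1}B + D$.

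For the forward direction, given an internally stabilizing $\Kbf$, \lem{R-S} and \eqn{R-S-condition} give $(I-\Rbf)\Sbf = \Sbf(I-\Rbf) = I$ with $\Sbf \in \RHinf$. The first affine constraint is precisely the $\xbf$-row of $(I-\Rbf)\Sbf = I$ restricted to the $\ybf$- and $\ubf$-columns. For the second constraint I would use the $\xbf$- and $\ubf$-rows of the $\xbf$-column block of $\Sbf(I-\Rbf) = I$ to solve for $\Sbf_{\xx}$ and $\Sbf_{\ux}$ in terms of $\Phibf_{\xy},\Phibf_{\uy}$ and $(A,B,C)$, substitute these into the $\ubf$-column block of $\Sbf(I-\Rbf) = I$, and collect terms using $\Gbf = C(zI-A)^{-1}B + D$. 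Membership in $\RHinf$ is immediate because all four parameters are blocks of $\Sbf$, and the controller formula $\Kbf = \Phibf_{\uu}^{-1}\Phibf_{\uy}$ follows from the $\ubf$-row of the $\ybf$-column of $\Sbf(I-\Rbf) = I$, which reads $\Sbf_{\uu}\Kbf = \Sbf_{\uy}$.

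For the converse, given the four parameters in the stated affine space, I would set $\Kbf = \Phibf_{\uu}^{-1}\Phibf_{\uy}$ and verify (i) that $\Phibf_{\uu}$ is invertible in $\Rp$ so $\Kbf \in \Rp$, and (ii) that the closed loop with this $\Kbf$ is internally stable, i.e., its full stability matrix $\Sbf$ lies in $\RHinf$. For (i), the two affine constraints combine into $\Phibf_{\uu} = I + \Phibf_{\uy}\Gbf$, which I would invert by a series expansion as in the proof of \cor{IOP}. For (ii), I would reconstruct the nine blocks of $\Sbf$ from $\{\Phibf_{\xy},\Phibf_{\uy},\Phibf_{\xu},\Phibf_{\uu}\}$, the constructed $\Kbf$, and the plant data $(A,B,C,D)$ --- using the column-dependency relation \eqn{u-column} (i.e., \lem{dependency}) to avoid enforcing stability of the $\ubf$-column blocks explicitly --- and then certify each reconstructed block lies in $\RHinf$ by repeatedly substituting the affine constraints, mirroring \eqn{of-SLS-S-1}--\eqn{of-SLS-S-2} in the proof of \cor{of-SLS}.

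I expect step (ii) to be the main obstacle. Because the parameters here do not include the $\xbf$-column blocks of $\Sbf$, certifying that the reconstructed $\Sbf_{\xx},\Sbf_{\ux},\Sbf_{\yx}$ --- and hence all of $\Sbf$ --- are stable requires genuine use of both affine constraints rather than a one-line appeal to \lem{dependency}; moreover, the properness of $\Kbf = \Phibf_{\uu}^{-1}\Phibf_{\uy}$ needs extra care when $D \neq O$, since then $\Phibf_{\uu}(\infty)$ is no longer automatically the identity. The forward direction and the derivation of the two affine constraints are, by contrast, routine block-matrix bookkeeping on $(I-\Rbf)\Sbf = \Sbf(I-\Rbf) = I$, essentially a rerun of the proof of \cor{of-SLS}.
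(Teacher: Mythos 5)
Your plan is correct and is essentially the paper's own argument: the paper derives \cor{mixed-2} from $(I-\Rbf)\Sbf = I$ together with $\Sbf(I-\Rbf)\Gamma_2 = \Gamma_2$, where right-multiplication by $\Gamma_2 = \mat{I & (zI-A)^{-1}B & O\\ O & I & O\\ O & O & I}$ is exactly the elimination of the $\xbf$-column blocks $\Sbf_{\xx},\Sbf_{\ux}$ that you carry out by hand, and the remaining steps (series inversion of $\Phibf_{\uu} = I + \Phibf_{\uy}\Gbf$, reconstruction of the leftover blocks of $\Sbf$ via \lem{R-S} and \eqn{u-column}) are the same ones the paper defers to the proofs of \cor{IOP} and \cor{of-SLS}. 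Your added cautions about the converse direction and about $D \neq O$ go beyond what the paper's two-line sketch records, but they do not change the route.
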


We give a brief proof below for the two corollaries above.

\begin{proof}

\lem{R-S} gives
\begin{align*}
I =&\ (I-\Rbf)\Sbf\\
=&
\mat{
zI-A & -B & O \\
O & I & -\Kbf \\
-C & -D & I
}
\mat{
\Sbf_{\xx} & \Phibf_{\xu} & \Phibf_{\xy} \\
\Phibf_{\ux} & \Phibf_{\uu} & \Phibf_{\uy} \\
\Phibf_{\yx} & \Sbf_{\yu} & \Phibf_{\yy}
}.
\end{align*}
We consider two matrices
\begin{align*}
\Gamma_1 =& \mat{
I & O & O\\
O & I & O\\
C(zI-A)^{-1} & O & I
},\\
\Gamma_2 =& \mat{
I & (zI-A)^{-1}B & O\\
O & I & O\\
O & O & I
}.
\end{align*}

Analogous to the proof of \cor{of-SLS}, \cor{mixed-1} can be derived from the following conditions and condition \eqn{R-S-condition}.
\begin{align*}
\Gamma_1(I-\Rbf)\Sbf = \Gamma_1, \quad
\Sbf(I-\Rbf) = I.
\end{align*}
Similarly, we derive \cor{mixed-2} from condition \eqn{R-S-condition} and
\begin{align*}
(I-\Rbf)\Sbf = I, \quad
\Sbf(I-\Rbf)\Gamma_2 = \Gamma_2.
\end{align*}
\end{proof}

\subsection{Generalized System Level Synthesis}

\begin{figure}
\centering
\includegraphics[scale=1]{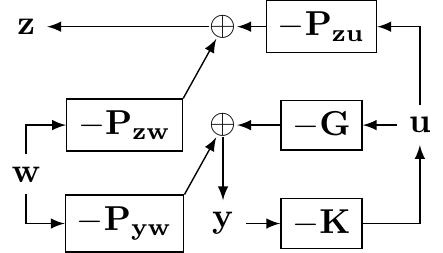}
\caption{In addition to the measurement $\ybf$ and the control $\ubf$, \cite{szabo2021generalised} generalizes the realization in \fig{realization-G-K} to consider also the stability of output $\zbf$ under an additional external disturbance $\wbf$. Notice that the plant $\Gbf$ and the controller $\Kbf$ are negative in this setting.}
\label{fig:realization-Szabo2021}
\end{figure}

\fig{realization-G-K} consists of the state/measurement $\xbf$/$\ybf$ and the control $\ubf$. \cite{szabo2021generalised} generalizes the setting to study how a controller could also stabilize some output $\zbf$ under an additional external disturbance $\wbf$ as shown in \fig{realization-Szabo2021}. The following corollary is a slight rewrite of the proposed generalized system level synthesis in \cite[Theorem 2]{szabo2021generalised}.\footnote{Although bearing the name ``generalized system level synthesis,'' \cor{Szabo2021} actually generalizes input-output parameterization (\cor{IOP}).}

\begin{corollary}\label{cor:Szabo2021}
For the realization in \fig{realization-Szabo2021} with $\Gbf \in \Rsp$, the set of all proper internally stabilizing controller is parameterized by $\Psibf$ where
\begin{align*}
\Psibf = \mat{
\Psibf_{\yy} & \Psibf_{\yu} & \Psibf_{\yw} \\
\Psibf_{\uy} & \Psibf_{\uu} & \Psibf_{\uw} \\
\Psibf_{\zy} & \Psibf_{\zu} & \Psibf_{\zw}
} \in \RHinf
\end{align*}
and $\Psibf$ lies in the affine space defined by
\begin{align*}
\mat{
& O\\
\Psibf & O \\
& I}
\mat{
\Gbf & \Pbf_{\yw}\\
I & O\\
O & I\\
\Pbf_{\zu} & \Pbf_{\zw}
}
=&
\mat{
O & O\\
I & O\\
O & O
},\\
\mat{
I & \Gbf & O & \Pbf_{\yw}\\
O & \Pbf_{\zu} & I & \Pbf_{\zw}
}
\mat{
& \Psibf& \\
O & O & I
}
=&
\mat{
I & O & O \\
O & O & O
}
\end{align*}
and the controller is given by $\ubf = -\Kbf \ybf$ where
\begin{align*}
\Kbf = -\Psibf_{\uy}\Psibf_{\yy}^{-1}.
\end{align*}
\end{corollary}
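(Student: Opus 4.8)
The proof should mirror the structure of the proofs of \cor{IOP} and \cor{of-SLS}: write down the realization matrix $\Rbf$ for \fig{realization-Szabo2021}, apply \lem{R-S} to obtain $(I-\Rbf)\Sbf = \Sbf(I-\Rbf) = I$, identify the parameter $\Psibf$ with the appropriate sub-block of $\Sbf$ (here the block mapping the disturbances on $\ybf$, $\ubf$, $\zbf$ — or rather on $\ybf$ and $\wbf$ — to the signals $\ybf$, $\ubf$, $\zbf$), and then read off the stated affine constraints and the controller formula. First I would fix notation: the internal state is $\etabf = \mat{\ybf\\ \ubf\\ \zbf}$, with external disturbance $\dbf$ and the extra exogenous input $\wbf$ entering as in the figure. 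Reading the block diagram with the sign conventions noted in the caption ($\Gbf$ and $\Kbf$ enter negatively), I would write
\[
\Rbf = \mat{O & -\Gbf & O\\ -\Kbf & O & O\\ \Pbf_{\zu}\text{-type terms}},
\]
i.e. the precise $3\times3$ block form consistent with $\ybf = -\Gbf\ubf + \Pbf_{\yw}\wbf$, $\ubf = -\Kbf\ybf$, and $\zbf$ being driven by $\ubf$ and $\wbf$ through $\Pbf_{\zu}, \Pbf_{\zw}$; since $\wbf$ is exogenous it is folded into the disturbance channel, so the relevant columns of $\Sbf$ are those indexed by the disturbances on $\ybf$ and by $\wbf$.

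For the forward direction (controller $\Rightarrow$ parameter), I would invoke \lem{R-S} and condition \eqn{R-S-condition}: $\Sbf \in \RHinf$ gives $\Psibf \in \RHinf$ directly, and multiplying out $(I-\Rbf)\Sbf = I$ and $\Sbf(I-\Rbf)=I$ and extracting the relevant block rows/columns produces exactly the two matrix equations in the statement. This is the same bookkeeping as in \cor{of-SLS}: the two displayed affine equations are just $(I-\Rbf)\Sbf=I$ and $\Sbf(I-\Rbf)=I$ restricted to the rows/columns that involve $\Psibf$ (the columns corresponding to the $\Phibf_{\xu}$-analogue and the row for $\zbf$ get absorbed into the ``$O$'' padding blocks). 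The controller identity $\Kbf = -\Psibf_{\uy}\Psibf_{\yy}^{-1}$ comes from the relation $\Psibf_{\uy} = -\Kbf\Psibf_{\yy}$ read off from one of these equations, together with invertibility of $\Psibf_{\yy}$ in $\Rp$.

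For the converse (parameter $\Rightarrow$ controller), I would set $\Kbf = -\Psibf_{\uy}\Psibf_{\yy}^{-1}$ and check two things: (i) $\Psibf_{\yy}$ is invertible in $\Rp$ so that $\Kbf \in \Rp$, and (ii) all remaining entries of $\Sbf$ exist, are in $\RHinf$, and satisfy \lem{R-S}, so that the closed loop is genuinely internally stable. For (i) I would reuse the trick from the proof of \cor{IOP}: since $\Gbf \in \Rsp$, from the affine constraints $\Psibf_{\yy}$ has the form $I + (\text{strictly proper})$, hence $\Psibf_{\yy} = I + (zI-\Lambda)^{-1}\Jbf$ for some $\Lambda, \Jbf \in \Rp$, whose Neumann series gives $\Psibf_{\yy}^{-1} \in \Rp$. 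For (ii) I would use \lem{dependency} / \eqn{u-column} to express the missing $\Sbf$-columns (the control-disturbance column, which is absent from $\Psibf$) as $e_{\ubf}$ plus $\RHinf$-combinations of the known columns, exactly as in \eqn{of-SLS-S-2}. \textbf{The main obstacle} I anticipate is purely notational: getting the sign conventions and the placement of the exogenous input $\wbf$ right in $\Rbf$ so that the somewhat unusual padded matrix equations in \cite[Theorem 2]{szabo2021generalised} come out verbatim — in particular accounting for which signals share a disturbance channel and which block of $\Sbf$ is being named $\Psibf$. Once $\Rbf$ is written down correctly, everything else is the now-familiar two-line application of \lem{R-S} plus the $\Rsp$-invertibility argument, and the footnote's remark that this ``generalizes IOP'' is a sanity check: setting the $\zbf$/$\wbf$ blocks trivial must collapse the statement to \cor{IOP}.
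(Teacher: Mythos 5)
Your overall template is the right one and matches the paper's: write down $\Rbf$, apply \lem{R-S}, read off the affine equations, and reuse the $\Gbf \in \Rsp$ Neumann-series trick from \cor{IOP} to invert $\Psibf_{\yy}$. The one place you differ from the paper --- and where your own flagged ``main obstacle'' actually bites --- is the treatment of $\wbf$. You propose a $3\times 3$ realization with $\etabf = (\ybf,\ubf,\zbf)$ and $\wbf$ folded into the disturbance channel; that forces you into the equivalent-system machinery of \sec{R-S_lemma-T}, and then $\Psibf_{\yw},\Psibf_{\uw},\Psibf_{\zw}$ are not blocks of $\Sbf$ but of a product $\Sbf\Tbf$, which makes the two displayed padded equations awkward to recover verbatim. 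The paper instead takes $\etabf = (\ybf,\ubf,\zbf,\wbf)$ with a zero row of $\Rbf$ for $\wbf$ (so $\wbf = \dbf_\wbf$) and nonzero blocks $\Rbf_{\yw} = -\Pbf_{\yw}$, $\Rbf_{\zw} = -\Pbf_{\zw}$; then the $\zbf$ column of $I-\Rbf$ is $e_\zbf$ and the $\wbf$ row is $e_\wbf^{\sf T}$, so $\Sbf$ immediately has the form in which $\Psibf$ is the literal submatrix obtained by deleting the $\zbf$ column and $\wbf$ row, and the two stated equations are just $(I-\Rbf)\Sbf=I$ and $\Sbf(I-\Rbf)=I$ with those trivial rows/columns stripped. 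Your route can be made to work, but the $4\times 4$ choice is what makes the ``somewhat unusual padded'' equations come out for free. One small bookkeeping slip: the column absent from $\Psibf$ is the $\zbf$-disturbance column (which is trivially $e_\zbf$ since nothing feeds back from $\zbf$), not the control-disturbance column --- $\Psibf_{\yu},\Psibf_{\uu},\Psibf_{\zu}$ are all present --- so no appeal to \eqn{u-column} is needed here; the missing pieces of $\Sbf$ are determined by the zero structure of $\Rbf$ alone.
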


We prove \cor{Szabo2021} using \lem{R-S} as follows.

\begin{proof}
The realization matrix in~\fig{realization-Szabo2021} is
\begin{align*}
\Rbf = \mat{
O & -\Gbf & O & -\Pbf_{\yw} \\
-\Kbf & O & O & O \\
O & -\Pbf_{\zu} & O & -\Pbf_{\zw}\\
O & O & O & O
}, \quad \etabf = \mat{\ybf\\ \ubf\\ \zbf \\ \wbf}.
\end{align*}
\lem{R-S} implies
\begin{align*}
I =&\ (I - \Rbf) \Sbf \\
=& \mat{
I & \Gbf & O & \Pbf_{\yw} \\
\Kbf & I & O & O \\
O & \Pbf_{\zu} & I & \Pbf_{\zw}\\
O & O & O & I
}
\mat{
\Psibf_{\yy} & \Psibf_{\yu} & \Sbf_{\yz} & \Psibf_{\yw} \\
\Psibf_{\uy} & \Psibf_{\uu} & \Sbf_{\uz} & \Psibf_{\uw} \\
\Psibf_{\zy} & \Psibf_{\zu} & \Sbf_{\zz} & \Psibf_{\zw} \\
\Sbf_{\wy} & \Sbf_{\wu} & \Sbf_{\wz} & \Sbf_{\ww}
}.
\end{align*}
By the $3^{\rm rd}$ column and the $4^{\rm th}$ row of $I-\Rbf$, we know
\begin{align*}
\Sbf =
\mat{
\Psibf_{\yy} & \Psibf_{\yu} & O & \Psibf_{\yw} \\
\Psibf_{\uy} & \Psibf_{\uu} & O & \Psibf_{\uw} \\
\Psibf_{\zy} & \Psibf_{\zu} & I & \Psibf_{\zw} \\
O & O & O & I
}.
\end{align*}
Accordingly, we can take $\Sbf$ except for its last row multiplied by the $2^{\rm nd}$ and the $4^{\rm th}$ columns of $I-\Rbf$ to derive
\begin{align*}
\mat{
& O\\
\Psibf & O \\
& I}
\mat{
\Gbf & \Pbf_{\yw}\\
I & O\\
O & I\\
\Pbf_{\zu} & \Pbf_{\zw}
}
=&
\mat{
O & O\\
I & O\\
O & O
}
\end{align*}
after rearranging the columns of $\Sbf$.
Similarly, we can derive
\begin{align*}
\mat{
I & \Gbf & O & \Pbf_{\yw}\\
O & \Pbf_{\zu} & I & \Pbf_{\zw}
}
\mat{
& \Psibf& \\
O & O & I
}
=&
\mat{
I & O & O \\
O & O & O
}
\end{align*}
from multiplying the $1^{\rm st}$ and $3^{\rm rd}$ rows of $I-\Rbf$ by $\Sbf$ except for the $3^{\rm rd}$ column.

Lastly, we can derive $\Kbf$ from the $2^{\rm nd}$ row of $I - \Rbf$ multiplied by the $1^{\rm st}$ column of $\Sbf$, which yields
\begin{align*}
\Kbf \Psibf_{\yy} + \Psibf_{\uy} = O.
\end{align*}
Below we show that $\Psibf_{\yy}$ is invertible in $\Rp$, which allows us to derive $\Kbf$ as desired. Similar to the proof of \cor{IOP}, since $\Gbf \in \Rsp$ and
\begin{align*}
\Psibf_{\yy} + \Gbf \Psibf_{\uy} = I,
\end{align*}
we can write
\begin{align*}
\Psibf_{\yy} = I + (zI - \Lambda)^{-1} \Jbf
\end{align*}
for some matrix $\Lambda$ and $\Jbf \in \Rp$. As a result, $\Psibf_{\yy}$ is invertible in $\Rp$ as
\begin{align*}
\Psibf_{\yy}^{-1} = I + \sum\limits_{k \geq 1}^{\infty} (zI-\Lambda)^{-k} \Jbf^k \in \Rp.
\end{align*}

\end{proof}

Notice that if $\Psibf_{\uu}$ is invertible, we can also alternatively derive $\Kbf = - \Psibf_{\uu}^{-1} \Psibf_{\uy}$ from multiplying the $2^{\rm nd}$ row of $\Sbf$ by the $1^{\rm st}$ column of $I - \Rbf$.

\subsection{Equivalence among Synthesis Methods}

The parameterizations above are shown equivalent in \cite{zheng2020equivalence} through careful calculations. Here we demonstrate how \lem{R-S} and transformations lead to more straightforward derivations of equivalent components.

\lem{R-S} implies that $\Rbf \to \Sbf$ is a one-to-one mapping. Therefore, to show the equivalence among different synthesis methods, we can simply find a transformation $\Tbf$ such that the equivalent system has the same realization as the other system. As such, \lem{R-S} suggests that the stability matrices are the same, and we just need to compare the elements correspondingly.

When comparing a state-feedback system with an output-feedback system in the following analyses, we assume that the state $\xbf$ is taken as the measurement $\ybf$.

\paragraph{Youla parameterization and IOP:} Youla parameterization and IOP share the same realization in~\fig{realization-G-K} (except for changing $\xbf$ to $\ybf$). Therefore,
\begin{align*}
\mat{
\Ybf & \Wbf\\
\Ubf & \Zbf
}
=&
\mat{
\Ubf_r & \Nbf_r\\
\Vbf_r & \Mbf_r
}
\mat{
I & O\\
-\Qbf & I
}
\mat{
\Mbf_l & O\\
O & \Ubf_l -\Qbf\Nbf_l \\
}
\\
=&
\mat{
(\Ubf_r - \Nbf_r \Qbf) \Mbf_l & (\Ubf_r - \Nbf_r \Qbf) \Nbf_l \\
(\Vbf_r - \Mbf_r \Qbf) \Mbf_l & I + (\Vbf_r - \Mbf_r \Qbf) \Nbf_l
}.
\end{align*}

\paragraph{IOP and SLP:} We then show the equivalence between IOP and SLP. For state-feedback SLP with realization in~\fig{realization-state-feedback}, we perform the transformation
\begin{align*}
\Tbf^{-1} = \mat{(zI-A)^{-1} & O\\ O & I}, \quad
\Tbf = \mat{zI-A & O\\ O & I},
\end{align*}
which leads to
\begin{align*}
\Tbf^{-1} \mat{
zI-A & -B \\
-\Kbf & I} =
\mat{I & -\Gbf \\ -\Kbf & I}.
\end{align*}
Accordingly, the stability matrix becomes
\begin{align*}
\mat{\Ybf & \Wbf \\ \Ubf & \Zbf} =& \mat{\Phibf_{\xbf} & \Phibf_{\xbf}B \\ \Phibf_{\ubf} & I + \Phibf_{\ubf}B }\Tbf \\
=&
\mat{\Phibf_{\xbf}(zI-A) & \Phibf_{\xbf}B(zI-A) \\ \Phibf_{\ubf} & I + \Phibf_{\ubf}B }.
\end{align*}

For output-feedback SLP, we consider the transformation
\begin{align*}
\Tbf^{-1} =& \mat{
I & O & O\\
O & I & O\\
C(zI-A)^{-1} & O & I
},\\
\Tbf =& \mat{
I & O & O\\
O & I & O\\
-C(zI-A)^{-1} & O & I
},
\end{align*}
which leads to
\begin{align*}
\Tbf^{-1} (I - \Rbf) =&\ \Tbf^{-1}
\mat{
zI-A & -B & O \\
O & I & -\Kbf \\
-C & -D & I
}\\
=&
\mat{
zI-A & -B & O\\
O & I & -\Kbf \\
O & -\Gbf & I
}, \quad \etabf = \mat{\xbf\\ \ubf\\ \ybf}.
\end{align*}
And the transformed stability matrix is
\begin{align*}
\Sbf\Tbf =&
\mat{
\Phibf_{\xx} & \Sbf_{\xu} & \Phibf_{\xy} \\
\Phibf_{\ux} & \Sbf_{\uu} & \Phibf_{\uy} \\
\Sbf_{\yx} & \Sbf_{\yu} & \Sbf_{\yy}
}\Tbf\\
=&
\mat{
\Phibf_{\xx} - \Phibf_{\xy}C(zI-A)^{-1} & \Sbf_{\xu} & \Phibf_{\xy} \\
\Phibf_{\ux} - \Phibf_{\uy}C(zI-A)^{-1} & \Sbf_{\uu} & \Phibf_{\uy} \\
\Sbf_{\yx} - \Sbf_{\yy}C(zI-A)^{-1} & \Sbf_{\yu} & \Sbf_{\yy}
}.
\end{align*}

Comparing the corresponding elements and we have
\begin{align*}
\mat{\Ybf & \Wbf \\ \Ubf & \Zbf}
=&
\mat{\Sbf_{\yy} & \Sbf_{\yu}\\
\Phibf_{\uy} & \Sbf_{\uu}
}\\
=&
\mat{
C\Phibf_{\xy} + D\Phibf_{\uy} + I & \Sbf_{\yu}\\
\Phibf_{\uy} & \Phibf_{\ux}B + \Phibf_{\uy}D + I
}
\end{align*}
where
\begin{align*}
\Sbf_{\yu} = (C\Phibf_{\xx} + D\Phibf_{\ux})B + (C\Phibf_{\xy} + D\Phibf_{\uy} + I)D.
\end{align*}
Our result extends the $D = O$ case in \cite{zheng2020equivalence} to general $D$.

\paragraph{SLP and mixed parameterizations:} SLP and mixed parameterizations share the same realization \fig{realization-output-feedback}. Therefore, they also share the same stability matrix according to \lem{R-S}, i.e., $\Phibf_{\xu},\Phibf_{\uu},\Phibf_{\yx},$ and $\Phibf_{\yy}$ can be found in \eqn{of-SLS-S-1} and \eqn{of-SLS-S-2}.

\section{Corollaries: Realizations}\label{sec:realizations}
The same parameterization could admit multiple different realizations\footnote{We remark that once $\Sbf$ is fixed, $\Rbf$ is uniquely defined by \lem{R-S} (if existing). However, one parameterization may not include the whole $\Sbf$, and hence there are still some degrees of freedom for different realizations $\Rbf$.}. In this section, we consider the original state-feedback SLS realization and two alternative realization proposals for SLS. We show that the realizations can be derived from \lem{R-S} through transformations.

\subsection{Original State-Feedback SLS Realization}
SLP parameterizes all internally stabilizing controller $\Kbf$ for the state-feedback system in~\fig{realization-state-feedback}. Using the resulting $\SFpair$, SLS proposes to implement the controller as in~\fig{realization-sf-SLS}.

\begin{figure}
\centering
\includegraphics[scale=1]{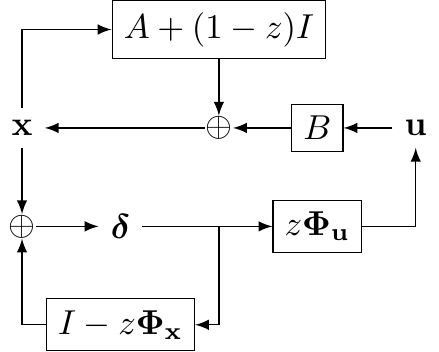}
\caption{The realization proposed in the original state-feedback SLS using the SLP $\SFpair$. By introducing an additional signal $\deltabf$, this realization avoids taking the inverse of $\Phibf_{\xbf}$.}
\label{fig:realization-sf-SLS}
\end{figure}

In other words, given $\Rbf$ as in \eqn{sf-SLS-R} and $\Sbf$ satisfying \lem{R-S}, we can realize the closed-loop system by
\begin{align*}
\Rbf_r = \mat{
A + (1-z)I & B & O\\
O & O & z\Phibf_{\ubf}\\
I & O & I - z\Phibf_{\xbf}
},
\quad
\etabf=\mat{
\xbf\\ \ubf\\ \deltabf
}.
\end{align*}

To show that, we augment \eqn{sf-SLS-R} with a dummy node
\begin{align}
I =&\ (I-\Rbf_{aug})\Sbf_{aug} \nonumber \\
=&
\mat{
zI-A & -B & O\\
-\Kbf & I & O\\
O & O & I
}
\mat{
\Phibf_{\xbf} & \Sbf_{\xu} & O \\
\Phibf_{\ubf} & \Sbf_{\uu} & O \\
O & O & I
}
\label{eqn:augmented}
\end{align}
and perform the following transformation on the augmented system to achieve the desired realization
\begin{align*}
\Tbf^{-1} =&
\mat{
I & O & O\\
\Phibf_{\ubf} & \Sbf_{\uu} & -z\Phibf_{\ubf}\\
-\Phibf_{\xbf} & -\Sbf_{\xu} & z\Phibf_{\xbf}
},\\
\Tbf =&
\mat{
I & O & O\\
O & I & \Kbf \\
z^{-1} & z^{-1}B & I - z^{-1}A
}.
\end{align*}
The realization is internally stable as
\begin{align*}
\Sbf_r =&\ \Sbf_{aug}\Tbf
=
\mat{
\Phibf_{\xbf} & \Sbf_{\xu} & \Sbf_{\xu}\Kbf \\
\Phibf_{\ubf} & \Sbf_{\uu} & \Sbf_{\uu}\Kbf \\
z^{-1} & z^{-1}B & I - z^{-1}A
}\\
=&
\mat{
\Phibf_{\xbf} & \Sbf_{\xu} & \Phibf_{\xbf}(zI-A) - I \\
\Phibf_{\ubf} & \Sbf_{\uu} & \Phibf_{\ubf}(zI-A) \\
z^{-1} & z^{-1}B & I - z^{-1}A
} \in \RHinf.
\end{align*}

\subsection{Simpler Realization for Deployment}
The original SLS realization in~\fig{realization-sf-SLS} needs to perform two convolutions $I - z\Phibf_{\xbf}$ and $z\Phibf_{\ubf}$, which are expensive to implement in practice. Therefore, \cite{tseng2020deployment} proposes a new realization in~\fig{realization-Tseng2020} that replaces one convolution by two matrix multiplications through the following corollary \cite[Theorem 1]{tseng2020deployment}.
\begin{figure}
\centering
\includegraphics[scale=1]{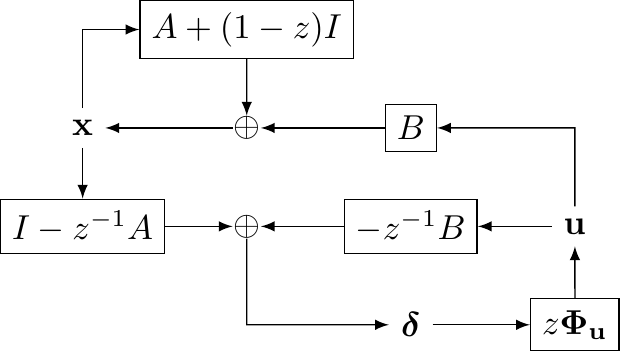}
\caption{The realization proposed in~\cite{tseng2020deployment} that realizes the SLS state-feedback controller using only one convolution $z\Phibf_{\ubf}$.}
\label{fig:realization-Tseng2020}
\end{figure}

\begin{corollary}
Let $A$ be Schur stable, the dynamic state-feedback controller $\ubf = \Kbf \xbf$ realized via
\begin{align*}
\delta\t =&\ x\t - Ax\tm - Bu\tm,\\
u\t =&\ \sum\limits_{\tau \geq 1} \Phi_{u}[\tau] \delta[t+1-\tau]
\end{align*}
is internally stabilizing.
\end{corollary}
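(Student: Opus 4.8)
The plan is to model the closed loop of the plant $z\xbf = A\xbf + B\ubf + \dbf_{\xbf}$ with the proposed controller realization as a single realization matrix, and then use \lem{R-S} to read off the accompanying stability matrix and certify that it lies in $\RHinf$. Reading off the two controller equations together with the plant, the internal state and realization matrix are
\begin{align*}
\etabf = \mat{\xbf\\ \ubf\\ \deltabf}, \quad
\Rbf_r = \mat{
A + (1-z)I & B & O\\
O & O & z\Phibf_{\ubf}\\
I - z^{-1}A & -z^{-1}B & O
},
\end{align*}
since $u\t = \sum_{\tau \geq 1}\Phi_u[\tau]\delta[t+1-\tau]$ is exactly the action of the transfer matrix $z\Phibf_{\ubf}$ on $\deltabf$, and $\delta\t = x\t - Ax\tm - Bu\tm$ is the block row $\mat{I - z^{-1}A & -z^{-1}B & O}$. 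First I would verify the $\Rbf_r$ part of condition \eqn{R-S-condition}: all off-diagonal blocks are proper, because $B$ is constant, $z\Phibf_{\ubf}\in\RHinf$ (as $\Phibf_{\ubf}\in z^{-1}\RHinf$), and $I - z^{-1}A$ and $-z^{-1}B$ are proper. Note that, unlike the realization of \fig{realization-sf-SLS}, this one never uses $\Phibf_{\xbf}$ — which is exactly what lets it replace the convolution $I - z\Phibf_{\xbf}$ by the matrix multiplications $Ax\tm$ and $Bu\tm$.

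Next I would solve $(I - \Rbf_r)\Sbf_r = I$ for the (unique, by \lem{R-S}) stability matrix, working block row by block row. The simplification that makes this painless is that the third block row reads $\deltabf = \xbf - z^{-1}(A\xbf + B\ubf) + \dbf_{\deltabf}$ while the first reads $A\xbf + B\ubf = z\xbf - \dbf_{\xbf}$, so the $\xbf$-terms cancel and $\deltabf = z^{-1}\dbf_{\xbf} + \dbf_{\deltabf}$; feeding this into the $\ubf$-row $\ubf = z\Phibf_{\ubf}\deltabf + \dbf_{\ubf}$ and then the $\xbf$-row $\xbf = (zI-A)^{-1}(B\ubf + \dbf_{\xbf})$ gives
\begin{align*}
\Sbf_r = \mat{
\Phibf_{\xbf} & (zI-A)^{-1}B & (zI-A)^{-1}Bz\Phibf_{\ubf}\\
\Phibf_{\ubf} & I & z\Phibf_{\ubf}\\
z^{-1}I & O & I
},
\end{align*}
where the $(1,1)$ block is identified with $\Phibf_{\xbf}$ through the affine constraint $(zI-A)\Phibf_{\xbf} - B\Phibf_{\ubf} = I$ of \cor{sf-SLS}. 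Equivalently, one could obtain $\Sbf_r$ as $\Sbf_{aug}\Tbf$ from the augmented bare state-feedback system \eqn{augmented} via the transformation with $\Tbf^{-1} = \mat{I & O & O\\ \Phibf_{\ubf} & \Sbf_{\uu} & -z\Phibf_{\ubf}\\ -z^{-1}I & O & I}$, mirroring the previous subsection, but the direct computation avoids having to push a matrix inversion through.

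Finally I would check $\Sbf_r\in\RHinf$ block by block. Every block assembled only from $\Phibf_{\xbf},\Phibf_{\ubf}\in z^{-1}\RHinf$ and constants — namely $\Phibf_{\xbf}$, $\Phibf_{\ubf}$, $z\Phibf_{\ubf}$, $z^{-1}I$, $I$, $O$ — is automatically stable proper. I expect the one real obstacle to be the block $(zI-A)^{-1}B$ (and hence also $(zI-A)^{-1}Bz\Phibf_{\ubf}$): $(zI-A)^{-1}$ is the open-loop plant resolvent, which is unstable for a general $A$, so this block is not stable for free. This is precisely where the hypothesis that $A$ is Schur stable enters: it gives $(zI-A)^{-1}\in\RHinf$, so $(zI-A)^{-1}B\in z^{-1}\RHinf$ and the remaining block is a product of $\RHinf$ factors. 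With all blocks of $\Sbf_r$ in $\RHinf$ and all off-diagonal blocks of $\Rbf_r$ in $\Rp$, condition \eqn{R-S-condition} holds and the realization is internally stabilizing. Besides that block, the only thing to watch is the $z^{\pm1}$ bookkeeping — that it is $z\Phibf_{\ubf}$ rather than $\Phibf_{\ubf}$ that appears in $\Rbf_r$, and that $\deltabf$ carries exactly one unit delay — which also makes transparent the trade-off: dropping $\Phibf_{\xbf}$ from the realization is paid for by the new $(zI-A)^{-1}$ factor, and that is why Schur stability of $A$ must be assumed.
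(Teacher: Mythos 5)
Your proposal is correct and follows essentially the same route as the paper: you set up the same realization matrix $\Rbf_r$, arrive at the identical stability matrix $\Sbf_r$ (the paper obtains it as $\Sbf_{aug}\Tbf$ with exactly the transformation $\Tbf^{-1}$ you mention as an alternative, while you back-substitute through $(I-\Rbf_r)\Sbf_r=I$ directly), and isolate the same crux, namely that $(zI-A)^{-1}B=\Sbf_{\xbf\ubf}\Sbf_{\ubf\ubf}^{-1}\in\RHinf$ is the one block requiring Schur stability of $A$.
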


\begin{proof}
We first write the controller realization in frequency domain:
\begin{align*}
\deltabf =&\ (I - z^{-1}A)\xbf - B\ubf,\\
\ubf =&\ z\Phibf_{\ubf}\deltabf.
\end{align*}
Together with the system, the realization is shown in~\fig{realization-Tseng2020}.

Essentially, the corollary says that given $\Rbf$ as in \eqn{sf-SLS-R} and $\Sbf$ satisfying \lem{R-S}, we can realize the closed-loop system by
\begin{align*}
\Rbf_{r} = \mat{
A+(1-z)I & B & O\\
O & O & z\Phibf_{\ubf}\\
z^{-1}(zI-A) & -z^{-1}B & O
}, \quad \etabf_{r} = \mat{\xbf\\ \ubf\\ \deltabf}.
\end{align*}

Again, we consider the augmented system in \eqn{augmented} and transform it to achieve the desired realization by
\begin{align*}
\Tbf^{-1} =&
\mat{
I & O & O\\
\Phibf_{\ubf} & \Sbf_{\uu} & -z\Phibf_{\ubf}\\
-z^{-1} & O & I
},\\
\Tbf =&
\mat{
I & O & O\\
O & \Sbf_{\uu}^{-1} & z\Sbf_{\uu}^{-1}\Phibf_{\ubf} \\
z^{-1} & O & I
}.
\end{align*}
As such, $(I-\Rbf_{r}) = \Tbf^{-1}(I-\Rbf_{aug})$ and the resulting stability matrix is
\begin{align*}
\Sbf_{r} = \Sbf_{aug}\Tbf = \mat{
\Phibf_{\xbf} & \Sbf_{\xu}\Sbf_{\uu}^{-1} & z\Sbf_{\xu}\Sbf_{\uu}^{-1}\Phibf_{\ubf} \\
\Phibf_{\ubf} & I & z\Phibf_{\ubf} \\
z^{-1} & O & I
}.
\end{align*}
Since $\Sbf_{\uu} = I + \Phibf_{\ubf}B$ is invertible, $(zI-A)\Sbf_{\xu} = B\Sbf_{\uu}$, and $A$ is Schur stable, we have
\begin{align*}
\Sbf_{\xu}\Sbf_{\uu}^{-1} = (zI-A)^{-1}B \in \RHinf,
\end{align*}
and hence the stability matrix is in $\RHinf$.
\end{proof}

In \cite{tseng2020deployment}, the authors substitute $\ubf$ into $\deltabf$ before analyzing the internal stability, which is simply another (linear) transformation of $\dbf$ and the resulting stability matrix is still internally stable.

\subsection{Closed-Loop Design Separation}
Instead of directly adopting the realization in~\fig{realization-sf-SLS}, \cite{li2020separating} found that it is possible to use much simpler transfer matrices to realize the same controller. The following corollary is from \cite[Theorem 2]{li2020separating}\footnote{To avoid the confusion with the realization matrix $\Rbf$, we write $\Pbf_c$ here instead.}.
\begin{figure}
\centering
\includegraphics[scale=1]{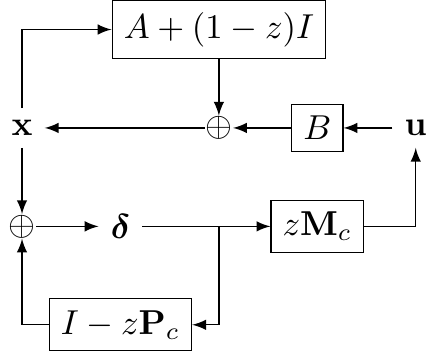}
\caption{The realization proposed in~\fig{realization-Li2020} that uses simpler transfer matrices $z\Mbf_c$ and $I-z\Pbf_c$ to implement the SLS state-feedback controller $\Phibf_{\ubf}\Phibf_{
\xbf}^{-1}$.}
\label{fig:realization-Li2020}
\end{figure}

\begin{corollary}\label{cor:Li2020}
For the causal realization in~\fig{realization-Li2020} and a given $\SFpair$ that satisfies \cor{sf-SLS}, $\{\Pbf_c,\Mbf_c\}$ realizes $\SFpair$ if and only if they satisfy
\begin{align}
\mat{
\Phibf_{\xbf} \\
\Phibf_{\ubf}
}
\mat{
zI-A & -B
}
\mat{
\Pbf_c\\
\Mbf_c
}
=
\mat{
\Pbf_c\\
\Mbf_c
}.
\label{eqn:Li2020}
\end{align}
\end{corollary}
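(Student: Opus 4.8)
The plan is to follow the same recipe as the earlier realization corollaries: write down the realization matrix $\Rbf_r$ for the block diagram in \fig{realization-Li2020}, identify the corresponding stability matrix $\Sbf_r$ via \lem{R-S}, and show that requiring $\Sbf_r \in \RHinf$ (and that the realization actually reproduces the controller $\Kbf = \Phibf_{\ubf}\Phibf_{\xbf}^{-1}$) is equivalent to the displayed identity \eqn{Li2020}. Concretely, the controller in \fig{realization-Li2020} takes the state $\xbf$, forms an internal signal (call it $\deltabf$ or $\xibf$) through the transfer matrices $I - z\Pbf_c$ and $z\Mbf_c$, and outputs $\ubf$; so the augmented internal state is $\etabf_r = \mat{\xbf \\ \ubf \\ \deltabf}$ and $\Rbf_r$ has the plant block $\mat{A+(1-z)I & B}$ in its first row together with the entries coming from $z\Mbf_c$ and $I - z\Pbf_c$ in the $\ubf$- and $\deltabf$-rows.

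First I would read off $\Rbf_r$ from the figure and compute $I - \Rbf_r$. Second, I would exhibit $\Sbf_r = \Sbf_{aug}\Tbf$ for an explicit transformation $\Tbf$ built from $\Phibf_\xbf,\Phibf_\ubf,\Sbf_{\xu},\Sbf_{\uu}$ (mirroring the $\Tbf$'s used for \fig{realization-sf-SLS} and \fig{realization-Tseng2020}), so that $(I-\Rbf_r) = \Tbf^{-1}(I-\Rbf_{aug})$; by \lem{R-S} and the discussion of equivalent systems in \sec{R-S_lemma-T}, internal stability of the new realization is then equivalent to $\Sbf_r \in \RHinf$. Third, for the ``only if'' direction I would extract, from the block equations $(I-\Rbf_r)\Sbf_r = I$, the relation that the $\xbf$-to-$\deltabf$ and $\ubf$-to-$\deltabf$ sub-blocks of $\Sbf_r$ must satisfy; substituting the realization of $\deltabf$ in terms of $\Pbf_c, \Mbf_c$ and using $\Kbf = \Phibf_{\ubf}\Phibf_{\xbf}^{-1}$ together with $(zI-A)\Phibf_\xbf - B\Phibf_\ubf = I$ should collapse to exactly \eqn{Li2020}. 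For the ``if'' direction I would run the computation in reverse: assuming \eqn{Li2020}, verify that the $\Sbf_r$ defined by the transformation lies in $\RHinf$ (here one needs $\Pbf_c, \Mbf_c$ causal, which is part of the hypothesis that the realization in \fig{realization-Li2020} is causal) and that the induced input-output map from $\xbf$ to $\ubf$ is indeed $\Phibf_{\ubf}\Phibf_{\xbf}^{-1}$, i.e. $\{\Pbf_c,\Mbf_c\}$ realizes $\SFpair$.

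The main obstacle I anticipate is pinning down the correct transformation $\Tbf$ (equivalently, the correct $\Sbf_r$) and checking that every block of $\Sbf_r$ is stable proper: the signal $\deltabf$ is related to $\xbf$ through $I - z\Pbf_c$, so terms like $z\Pbf_c$ and $z\Mbf_c$ appear, and one must argue (as in the $\Sbf_{\xu}\Sbf_{\uu}^{-1} = (zI-A)^{-1}B$ step for \fig{realization-Tseng2020}) that the apparently improper combinations actually cancel once \eqn{Li2020} is imposed. The bookkeeping of which rows/columns of $(I-\Rbf_r)$ get multiplied by which parts of $\Sbf_r$ — and correctly handling the degrees of freedom that \eqn{Li2020} leaves in $\{\Pbf_c,\Mbf_c\}$ — is where the real work lies; the algebra itself is routine once $\Rbf_r$ and $\Tbf$ are written down correctly.
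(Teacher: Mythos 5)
Your overall recipe (read off $\Rbf_r$ from \fig{realization-Li2020}, invoke \lem{R-S}, and turn the block equations into \eqn{Li2020}) matches the paper's setup, and your ``only if'' direction is essentially the paper's: starting from $\Sbf(I-\Rbf_r)=I$, one multiplies on the right by the column $\mat{z\Pbf_c \\ z\Mbf_c \\ I}$, observes that $(I-\Rbf_r)\mat{z\Pbf_c \\ z\Mbf_c \\ I}$ has only its top block nonzero, and \eqn{Li2020} drops out after dividing by $z$. The genuine gap is in your ``if'' direction, where you propose to ``verify that the $\Sbf_r$ defined by the transformation lies in $\RHinf$.'' That is not what the corollary asserts, and it is not provable from \eqn{Li2020}: the paper explicitly remarks, right after the proof, that \cor{Li2020} does \emph{not} guarantee $\Sbf \in \RHinf$, which is precisely why \cite{li2020separating} performs an a posteriori stability check. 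Here ``$\{\Pbf_c,\Mbf_c\}$ realizes $\SFpair$'' means only that a stability matrix $\Sbf$ \emph{exists} whose first block column is $(\Phibf_{\xbf},\Phibf_{\ubf},\Sbf_{\dx})$; stability of the remaining blocks, in particular $\Sbf_{\dx}=z^{-1}\Deltabf_c^{-1}$ with $\Deltabf_c=\mat{zI-A & -B}\mat{\Pbf_c \\ \Mbf_c}$, is not claimed. Indeed \eqn{Li2020} is satisfied by $\Pbf_c=\Phibf_{\xbf}\Deltabf_c$, $\Mbf_c=\Phibf_{\ubf}\Deltabf_c$ for an essentially arbitrary causal invertible $\Deltabf_c$, and $\Deltabf_c^{-1}$ need not be stable, so chasing $\Sbf_r\in\RHinf$ will leave you stuck.

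What the ``if'' direction actually requires, and what your plan omits, is an existence argument: you must show that $\Deltabf_c$ is invertible in $\Rp$ so that $\Sbf_{\dx}$, and hence the whole $\Sbf$, exists. The paper gets this from causality alone: since $I-z\Pbf_c$ and $z\Mbf_c$ are proper, $\Deltabf_c = z\Pbf_c-(A\Pbf_c+B\Mbf_c)=I-z^{-1}\Jbf$ with $\Jbf\in\Rp$, whence $\Deltabf_c^{-1}=I+\sum_{k\geq 1}z^{-k}\Jbf^k\in\Rp$. Relatedly, the paper does not construct a transformation $\Tbf$ with $(I-\Rbf_r)=\Tbf^{-1}(I-\Rbf_{aug})$ for this corollary (unlike the two preceding realizations); building such a $\Tbf$ would presuppose the very invertibility you still need to establish. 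Redirect your ``if'' direction from stability to existence, supply the causality argument for $\Deltabf_c^{-1}\in\Rp$, and the proof goes through.
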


\begin{proof}
The corollary says that for the realization
\begin{align*}
\Rbf = \mat{
A + (1-z)I & B & O\\
O & O & z\Mbf_c\\
I & O & I - z\Pbf_c
},
\quad
\etabf=\mat{
\xbf\\ \ubf\\ \deltabf
}
\end{align*}
and the given $(\Phibf_{\xbf}, \Phibf_{\ubf})$, there exists a solution
\begin{align*}
\Sbf = \mat{
\Phibf_{\xbf} & \Sbf_{\xu} & \Sbf_{\xd}\\
\Phibf_{\ubf} & \Sbf_{\uu} & \Sbf_{\ud}\\
\Sbf_{\dx} & \Sbf_{\du} & \Sbf_{\dd}
}
\end{align*}
satisfying \lem{R-S} if and only if \eqn{Li2020} holds.

If such an $\Sbf$ exists, \lem{R-S} suggests
\begin{align*}
\Sbf (I-\Rbf)
= I,
\end{align*}
and we have
\begin{align*}
(I-\Rbf)
\mat{
z\Pbf_c\\
z\Mbf_c\\
I
}
=
\mat{
zI-A & -B & O\\
O & O & O\\
O & O & O
}
\mat{
z\Pbf_c\\
z\Mbf_c\\
I
}.
\end{align*}
Therefore,
\begin{align}
&\ \Sbf(I-\Rbf)\mat{
z\Pbf_c\\
z\Mbf_c\\
I
} = \mat{
z\Pbf_c\\
z\Mbf_c\\
I
} \label{eqn:Li2020-Sdx}\\
\Rightarrow&
\mat{
\Phibf_{\xbf} \\
\Phibf_{\ubf}
}
\mat{
zI-A & -B
}
\mat{
z\Pbf_c\\
z\Mbf_c
}
=
\mat{
z\Pbf_c\\
z\Mbf_c
}\nonumber
\end{align}
and \eqn{Li2020} follows from dividing both sides by $z$.

On the other hand, when \eqn{Li2020} holds and $\SFpair$ satisfies \cor{sf-SLS}, the stability matrix can be derived from $\Sbf(I-\Rbf) = I$ as
\begin{align*}
\Sbf =
\mat{
\Phibf_{\xbf} & \Phibf_{\xbf}B & \Phibf_{\xbf}(zI-A) - I \\
\Phibf_{\ubf} & I + \Phibf_{\ubf}B & \Phibf_{\ubf}(zI-A) \\
\Sbf_{\dx} & \Sbf_{\dx}B & \Sbf_{\dx}(zI-A)
}
\end{align*}
where, by \eqn{Li2020-Sdx},
\begin{align*}
\Sbf_{\dx} = z^{-1} \Deltabf_c^{-1} = z^{-1}\left( \mat{
zI-A & -B
}
\mat{
\Pbf_c\\
\Mbf_c
} \right)^{-1}.
\end{align*}

$\Sbf$ exists if $\Sbf_{\dx}$ exists. In other words, we have to show that $\Deltabf_c$ is invertible. Since the system is causal, $I-z\Pbf_c$ and $z\Mbf_c$ are both in $\Rp$. Therefore,
\begin{align*}
\Deltabf_c = z\Pbf_c - (A\Pbf_c + B\Mbf_c) = I - z^{-1}\Jbf
\end{align*}
where $\Jbf \in \Rp$, and hence
\begin{align*}
\Deltabf_c^{-1} = I + \sum\limits_{k \geq 1}^{\infty}  z^{-k}\Jbf^k \in \Rp,
\end{align*}
which concludes the proof.
\end{proof}

We remark that \cor{Li2020} does not guarantee that $\Sbf \in \RHinf$, and hence the authors in \cite{li2020separating} propose to perform a posteriori stability check. According to the proof of \cor{Li2020}, we can easily guarantee $\Sbf \in \RHinf$ by requiring $\Sbf_{\dx} \in z^{-1}\RHinf$ (to ensure $\Sbf_{\dd} = \Sbf_{\dx}(zI-A) \in \RHinf$). This is one benefit resulting from the analysis using \lem{R-S} and condition \eqn{R-S-condition}.

\section{Corollaries: Existing Robust Results}\label{sec:existing_robust_results}
We unite the existing robust results under \lem{R-S} and \lem{perturbed-S}, including $\mu$-synthesis \cite{doyle1982analysis, doyle1985structured, zhou1998essentials}, primal-dual Youla parameterization \cite{niemann2002reliable}, input-output parameterization \cite{zheng2020sample}, and system level synthesis \cite{matni2017Scalable,anderson2019system,boczar2018finite}. In the following derivations, we use hatted symbols to denote nominal signals/matrices.

\begin{figure}
\centering
\includegraphics{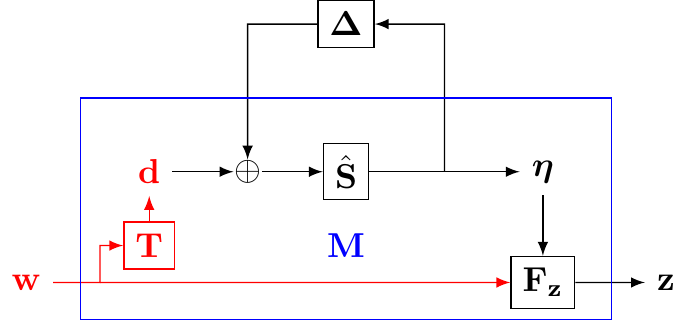}
\caption{For additive perturbation $\Deltabf$, the matrix $\Mbf$ in the $\mu$-synthesis is essentially the nominal stability matrix $\Snm$ wrapped by transformation $\Tbf$ and the output transfer matrix $\Fbf_{\zbf}$.}
\label{fig:mu-synthesis}
\end{figure}

\subsection{$\mu$-Synthesis}

$\mu$-synthesis studies the quantity ${\rm det}(I-\Mbf\Deltabf)$ \cite{doyle1982analysis, doyle1985structured, zhou1998essentials}, where the matrix $\Mbf \in \RHinf$ maps some external disturbance $\wbf$ to some output $\zbf$. The $\mu$ function, or the structured singular value, is the inverse size of the ``smallest'' structured perturbation that destabilizes $(I-\Mbf\Deltabf)^{-1}$.
Further, the original papers claim that it is always possible to choose $\Mbf$ so that $\Deltabf$ is block-diagonal. As to why ${\rm det}(I-\Mbf\Deltabf)$ is critical and why $\Deltabf$ could be formed block-diagonal, \lem{perturbed-S} provides intuitive answers.

Suppose the realization is subject to additive perturbation $\Deltabf$. We first show that $\Mbf$ is a wrapped nominal stability matrix $\Snm$.
Since $\etabf$ contains all the internal signals, the output $\zbf$ can be expressed as a linear map $\Fbf_\zbf$ of $\etabf$ and the external input $\wbf$. Meanwhile, $\wbf$ can be mapped to the external disturbance $\dbf$ through a transformation $\Tbf$. Therefore, the matrix $\Mbf$ can be expressed as in \fig{mu-synthesis}. Here the $\Stru$ is expanded by \lem{perturbed-S} as in \fig{perturbed-stability}.

Since $\Deltabf$ is an additive perturbation on the realization, we can create dummy signals to the realization so that each signal has at most one perturbed input/output. Equivalently, $\Deltabf$ is structured such that there is at most one perturbation block at each row/column. Hence, $\Deltabf$ can be permuted to take a block-diagonal form.

On the other hand, when $\wbf = \dbf$ and $\zbf = \etabf$, we have $\Mbf = \Snm \in \RHinf$. By \lem{perturbed-S}, the perturbed stability is
\begin{align*}
\Stru = \Snm(I-\Snm\Deltabf)^{-1} = \Mbf(I-\Mbf\Deltabf)^{-1},
\end{align*}
which is not stable if $I-\Mbf\Deltabf$ is not invertible, or ${\rm det}(I-\Mbf\Deltabf) = 0$.

\subsection{Robust Primal-Dual Youla Parameterization}

\cite[Chapter 3, Theorem 4.2]{tay1998high} generalizes the primal Youla parameterization $\Qbf$ in \sec{controller_synthesis-Youla} to also parameterize the stabilizable plants $\Gbf$ using dual Youla parameter $\Pbf$.\footnote{To avoid the confusion with the stability matrix $\Sbf$, we express the dual Youla parameter by $\Pbf$ instead.}
Here we present a modern rewrite of \cite[Chapter 3, Theorem 4.2]{tay1998high}:

\begin{corollary}\label{cor:primal-dual-Youla}
For the realization in \fig{realization-G-K} and the doubly coprime factorization \eqn{coprime-factorization}, let $\Gbf$ and $\Kbf$ be parameterized by
\begin{align*}
\Gbf =&\ (\Nbf_r - \Ubf_r \Pbf)(\Mbf_r-\Vbf_r \Pbf)^{-1}\\
\Kbf =&\ (\Vbf_r - \Mbf_r \Qbf)(\Ubf_r -\Nbf_r\Qbf)^{-1}
\end{align*}
where $\Pbf, \Qbf \in \RHinf$ are the dual and primal Youla parameters. The system is internally stable if and only if the following $(\Pbf,\Qbf)$ realization is internally stable:
\begin{center}
\includegraphics[scale=1]{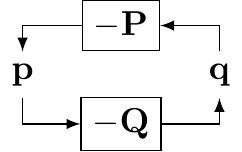}
\end{center}
\end{corollary}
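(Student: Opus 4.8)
The plan is to realise the $(\Pbf,\Qbf)$ loop as an equivalent system (in the sense of \sec{R-S_lemma}) of the $(\Gbf,\Kbf)$ loop, with a transformation assembled from the doubly coprime factors in \eqn{coprime-factorization}, and then to read off stability from \lem{R-S}. For the realization in \fig{realization-G-K} (the signals being the measurement $\ybf$ and the control $\ubf$), $\Rbf=\mat{O&\Gbf\\\Kbf&O}$ with $\etabf=\mat{\ybf\\\ubf}$, so $I-\Rbf=\mat{I&-\Gbf\\-\Kbf&I}$; the $(\Pbf,\Qbf)$ realization in the displayed figure is the analogous feedback interconnection of $\Pbf$ and $\Qbf$, with $I-\Rbf'=\mat{I&\Pbf\\\Qbf&I}$ (any sign convention in the figure is absorbed by a constant sign-diagonal conjugation, trivially invertible in $\RHinf$).

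The algebraic core is to rewrite the two parameterization formulas as \emph{left} coprime factorizations. Using the identities contained in \eqn{coprime-factorization}, one checks that
\begin{equation*}
\Gbf=(\Mbf_l-\Pbf\Vbf_l)^{-1}(\Nbf_l-\Pbf\Ubf_l),\qquad
\Kbf=(\Ubf_l-\Qbf\Nbf_l)^{-1}(\Vbf_l-\Qbf\Mbf_l),
\end{equation*}
and substituting these into $I-\Rbf$ and factoring the denominators out on the left yields
\begin{equation*}
I-\Rbf=\mat{(\Mbf_l-\Pbf\Vbf_l)^{-1}&O\\O&(\Ubf_l-\Qbf\Nbf_l)^{-1}}\mat{I&\Pbf\\\Qbf&I}\mat{\Mbf_l&-\Nbf_l\\-\Vbf_l&\Ubf_l},
\end{equation*}
that is, $I-\Rbf=\Theta\,(I-\Rbf')\,\Tbf$ with $\Tbf:=\mat{\Mbf_l&-\Nbf_l\\-\Vbf_l&\Ubf_l}$ invertible in $\RHinf$ (its inverse is $\mat{\Ubf_r&\Nbf_r\\\Vbf_r&\Mbf_r}$) and $\Theta:=\mat{(\Mbf_l-\Pbf\Vbf_l)^{-1}&O\\O&(\Ubf_l-\Qbf\Nbf_l)^{-1}}$ with $\Theta^{-1}=\mat{\Mbf_l-\Pbf\Vbf_l&O\\O&\Ubf_l-\Qbf\Nbf_l}\in\RHinf$. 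By \lem{R-S}, $\Sbf=\Tbf^{-1}\Sbf'\Theta^{-1}$, so whenever the $(\Pbf,\Qbf)$ realization is internally stable ($\Sbf'\in\RHinf$) we obtain $\Sbf\in\RHinf$ at once, and the controller $\Kbf=(\Vbf_r-\Mbf_r\Qbf)(\Ubf_r-\Nbf_r\Qbf)^{-1}$ is the controller block of $\Sbf$, exactly as in the Youla corollary.

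The converse is the delicate direction, because $\Theta$ is in general \emph{not} stable -- it inverts the left coprime denominators, which carry the unstable poles of $\Gbf$ and $\Kbf$ -- so one cannot simply declare $\Sbf'=\Tbf\Sbf\Theta$ stable. Substituting the factorizations of $\Gbf$ and $\Kbf$ and reducing with \eqn{coprime-factorization} gives the loop identities
\begin{align*}
(\Mbf_l-\Pbf\Vbf_l)(I-\Gbf\Kbf)(\Ubf_r-\Nbf_r\Qbf)&=I-\Pbf\Qbf,\\
(\Ubf_l-\Qbf\Nbf_l)(I-\Kbf\Gbf)(\Mbf_r-\Vbf_r\Pbf)&=I-\Qbf\Pbf,
\end{align*}
so $\det(I-\Pbf\Qbf)=\det(\Mbf_l-\Pbf\Vbf_l)\det(I-\Gbf\Kbf)\det(\Ubf_r-\Nbf_r\Qbf)$. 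Internal stability of $(\Gbf,\Kbf)$ makes $(I-\Gbf\Kbf)^{-1}$ stable, and since $\Gbf(I-\Kbf\Gbf)^{-1}$ and $\Kbf(I-\Gbf\Kbf)^{-1}$ are stable it forces $(I-\Gbf\Kbf)^{-1}$ to vanish, in the appropriate directions, at the right-half-plane poles of $\Gbf$ and $\Kbf$; by coprimeness $\det(\Mbf_l-\Pbf\Vbf_l)$ and $\det(\Ubf_r-\Nbf_r\Qbf)$ carry their right-half-plane zeros at exactly those poles. Hence these cancel, $\det(I-\Pbf\Qbf)$ is a unit in $\RHinf$, and -- since $\Pbf,\Qbf\in\RHinf$ -- this is precisely $\Sbf'\in\RHinf$.

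The step I expect to require real care is this last cancellation: making it rigorous means tracking pole/zero directions and multiplicities through the identities of \eqn{coprime-factorization} rather than settling for a degree count, which is in effect re-deriving the classical primal--dual Youla theorem, now routed through \lem{R-S}. Everything else is bookkeeping.
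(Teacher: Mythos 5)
Your factorization and your proof of the ``if'' direction coincide with the paper's argument: the paper conjugates the doubly coprime identity \eqn{coprime-factorization} by exactly your $\Theta$ and $\Tbf$ (packaged there as a single transformation with $\Tbf^{-1}=\Theta\mat{I&\Pbf\\\Qbf&I}$), obtains the stability matrix $\Sbf=\mat{\Ubf_r&\Nbf_r\\\Vbf_r&\Mbf_r}\Sbf'\,\Theta^{-1}$, and concludes $\Sbf\in\RHinf$ from $\Sbf'=\mat{I&\Pbf\\\Qbf&I}^{-1}\in\RHinf$ because the coprime matrix and $\Theta^{-1}$ are both in $\RHinf$. Up to that point the two proofs are the same, and your left-coprime rewrites of $\Gbf$ and $\Kbf$ are correct consequences of \eqn{coprime-factorization}.

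The converse is where you depart, and while your diagnosis is sound --- $\Theta^{-1}=\mat{\Mbf_l-\Pbf\Vbf_l&O\\O&\Ubf_l-\Qbf\Nbf_l}$ is not a unit in $\RHinf$, so one cannot read $\Sbf'\in\RHinf$ off $\Sbf\in\RHinf$ by inverting the sandwich (the paper's own one-line treatment of this direction leans on exactly that unjustified inversion) --- your repair does not close the gap. The assertion that internal stability forces the right-half-plane zeros of $\det(\Mbf_l-\Pbf\Vbf_l)$ and $\det(\Ubf_r-\Nbf_r\Qbf)$ to cancel against the poles of $\det(I-\Gbf\Kbf)^{-1}$ \emph{is} the classical coprime-factorization stability test restated, not a proof of it: a scalar determinant count cannot track directions and multiplicities, and the cancellation only holds because the new factor pairs $(\Ubf_l-\Qbf\Nbf_l,\;\Vbf_l-\Qbf\Mbf_l)$ and $(\Mbf_r-\Vbf_r\Pbf,\;\Nbf_r-\Ubf_r\Pbf)$ are themselves left/right \emph{coprime}, a fact you never establish (it does hold; the Bezout certificates can be read off from \eqn{coprime-factorization} after the same conjugation). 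Once that coprimeness is verified, the identity you already derived,
\begin{align*}
(\Ubf_l-\Qbf\Nbf_l)(\Mbf_r-\Vbf_r\Pbf)-(\Vbf_l-\Qbf\Mbf_l)(\Nbf_r-\Ubf_r\Pbf)=I-\Qbf\Pbf,
\end{align*}
together with the standard test gives $(I-\Qbf\Pbf)^{-1}\in\RHinf$ directly and determinants never need to enter. As written, the backward implication in your proposal rests on an unproven step of essentially the same weight as the theorem itself.
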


\begin{proof}
Similar to the approach in \cite{tseng2021realization}, we transform the coprime factorization \eqn{coprime-factorization} by
\begin{align*}
\Tbf^{-1} =&
\mat{
(\Mbf_l-\Pbf\Vbf_l)^{-1} & O\\
O & (\Ubf_l -\Qbf\Nbf_l)^{-1} \\
}
\mat{
I & \Pbf\\
\Qbf & I
}\\
\Tbf =&
\mat{
I & \Pbf\\
\Qbf & I
}^{-1}
\mat{
\Mbf_l-\Pbf\Vbf_l & O\\
O & \Ubf_l -\Qbf\Nbf_l \\
}
\end{align*}
where
\begin{align*}
\mat{
I & \Pbf\\
\Qbf & I
}^{-1}
= \mat{
I & -\Pbf\\
-\Qbf & I
}
\mat{
(I - \Pbf\Qbf)^{-1} & O\\
O & (I - \Qbf\Pbf)^{-1}
}.
\end{align*}
The transformation leads to
\begin{align*}
& \Tbf^{-1}
\mat{
\Mbf_l & -\Nbf_l\\
-\Vbf_l & \Ubf_l
}
\mat{
\Ubf_r & \Nbf_r\\
\Vbf_r & \Mbf_r
}\Tbf\\
=& \mat{I & -\Gbf\\
-\Kbf & I}
\mat{
\Ubf_r & \Nbf_r\\
\Vbf_r & \Mbf_r
}\Tbf = I.
\end{align*}
By \lem{R-S}, the stability matrix of the system is
\begin{align*}
\mat{
\Ubf_r & \Nbf_r\\
\Vbf_r & \Mbf_r
}\Tbf
\end{align*}
which should be in $\RHinf$ to ensure internal stability. By assumption, the coprime factorization matrix, $\Pbf$, and $\Qbf$ are all in $\RHinf$. We then only need to ensure $\Tbf \in \RHinf$, which requires
\begin{align*}
\mat{
I & \Pbf\\
\Qbf & I
}^{-1} \in \RHinf.
\end{align*}
Treating the matrix inverse as a stability matrix, its corresponding realization is exactly the $(\Pbf,\Qbf)$ realization. Consequently, the whole system is internally stable if and only if the $(\Pbf,\Qbf)$ realization is internally stable, which concludes the proof.
\end{proof}

Leveraging the dual Youla parameterization, \cite{niemann2002reliable} proposes to embed the perturbation $\Deltabf$ on the plant $\Gbf$ into $\Pbf$, as such
\begin{align}
\Gbf(\Deltabf) =&\ (\Nbf_r - \Ubf_r \Pbf(\Deltabf))(\Mbf_r-\Vbf_r \Pbf(\Deltabf))^{-1}.
\label{eqn:Youla-plant}
\end{align}
\cite{niemann2002reliable} then suggests following result.

\begin{corollary}
Consider a perturbed Youla parameterization as in \cor{primal-dual-Youla} with the plant \eqn{Youla-plant}. The system is internally stable if
\begin{align}
(I - \Qbf\Pbf(\Deltabf))^{-1} \in \RHinf.
\end{align}
\end{corollary}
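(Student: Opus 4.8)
The plan is to reduce the claim to the necessary-and-sufficient condition already established in \cor{primal-dual-Youla}. Since in \eqn{Youla-plant} the perturbation enters only through the dual Youla parameter, the perturbed closed loop is precisely the $(\Pbf(\Deltabf),\Qbf)$ realization appearing in that corollary. Hence internal stability of the perturbed system is equivalent to
\begin{align*}
\mat{I & \Pbf(\Deltabf)\\ \Qbf & I}^{-1} \in \RHinf,
\end{align*}
and it remains only to verify that this block inverse is stable under the stated hypothesis.

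First I would expand the block inverse using the $(1,1)$ entry $I$ as the Schur pivot, which gives
\begin{align*}
\mat{I & \Pbf(\Deltabf)\\ \Qbf & I}^{-1}
= \mat{I + \Pbf(\Deltabf)(I - \Qbf\Pbf(\Deltabf))^{-1}\Qbf & -\Pbf(\Deltabf)(I - \Qbf\Pbf(\Deltabf))^{-1}\\ -(I - \Qbf\Pbf(\Deltabf))^{-1}\Qbf & (I - \Qbf\Pbf(\Deltabf))^{-1}}.
\end{align*}
Every block on the right is a finite sum/product of $\Pbf(\Deltabf)$, $\Qbf$, and $(I - \Qbf\Pbf(\Deltabf))^{-1}$. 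Given that $\Qbf \in \RHinf$, that the perturbed dual parameter $\Pbf(\Deltabf) \in \RHinf$, and that $(I - \Qbf\Pbf(\Deltabf))^{-1} \in \RHinf$ by hypothesis, all four blocks lie in $\RHinf$; therefore the whole inverse lies in $\RHinf$ and \cor{primal-dual-Youla} yields internal stability. Equivalently, one may instead use the factorization $\mat{I & -\Pbf(\Deltabf)\\ -\Qbf & I}\,{\rm diag}\big((I - \Pbf(\Deltabf)\Qbf)^{-1},\,(I - \Qbf\Pbf(\Deltabf))^{-1}\big)$ recorded in the proof of \cor{primal-dual-Youla}, together with the push-through identity $(I - \Pbf(\Deltabf)\Qbf)^{-1} = I + \Pbf(\Deltabf)(I - \Qbf\Pbf(\Deltabf))^{-1}\Qbf$ to transfer stability from one Schur complement to the other.

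The only place the argument could snag is in making explicit that $\Pbf(\Deltabf) \in \RHinf$ (i.e., that the admissible perturbations are restricted to those keeping the embedded dual parameter stable) and that the indicated inverses exist; once those are in hand, the desired implication is immediate because the single assumed condition $(I - \Qbf\Pbf(\Deltabf))^{-1} \in \RHinf$ already controls every block of the relevant matrix inverse. No quantitative estimate is needed — the content is entirely the reduction via \cor{primal-dual-Youla} plus the Schur-complement (equivalently push-through) identity.
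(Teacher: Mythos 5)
Your proposal is correct and follows essentially the same route as the paper: both reduce to the block-inverse condition of \cor{primal-dual-Youla} and then use the push-through identity $(I - \Pbf(\Deltabf)\Qbf)^{-1} = I + \Pbf(\Deltabf)(I - \Qbf\Pbf(\Deltabf))^{-1}\Qbf$ so that the single hypothesis $(I - \Qbf\Pbf(\Deltabf))^{-1} \in \RHinf$, together with $\Pbf(\Deltabf), \Qbf \in \RHinf$, controls every block. Your explicit Schur-pivot expansion is just the paper's factorization written out after applying that identity, so there is no substantive difference.
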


\begin{proof}
As shown in the proof of \cor{primal-dual-Youla}, the system is internally stable if and only if
\begin{align*}
&\mat{
I & \Pbf(\Deltabf)\\
\Qbf & I
}^{-1}\\
=& \mat{
I & -\Pbf(\Deltabf)\\
-\Qbf & I
}
\mat{
(I - \Pbf(\Deltabf)\Qbf)^{-1} & O\\
O & (I - \Qbf\Pbf(\Deltabf))^{-1}
}
\end{align*}
is in $\RHinf$. Since $\Pbf(\Deltabf), \Qbf \in \RHinf$ and
\begin{align*}
(I - \Pbf(\Deltabf)\Qbf)^{-1} = I + \Pbf (I - \Qbf\Pbf(\Deltabf))^{-1}\Qbf,
\end{align*}
we know $(I - \Qbf\Pbf(\Deltabf))^{-1} \in \RHinf$ implies $(I - \Pbf(\Deltabf)\Qbf)^{-1} \in \RHinf$, which concludes the proof.
\end{proof}

\subsection{Robust Input-Output Parameterization (IOP)}
\def\DeltaG{\Deltabf_{\Gbf}}

\cite[Theorem C.2]{zheng2020sample} presents the following robust result for IOP in \sec{controller_synthesis-IOP}.

\begin{corollary}\label{cor:robust-IOP}
For the realization in \fig{realization-G-K}, let $\Kcal_\epsilon$ be the set of robustly stabilizing controllers defined by
\begin{align*}
\Kcal_\epsilon = \left\lbrace
\Kbf : \Kbf \text{ internally stabilizes } \Gbf(\DeltaG), \forall \DeltaG \in \Dcal_\epsilon
\right\rbrace
\end{align*}
where $\Gbf(\Deltabf_{\Gbf}) = \Gbf + \DeltaG$ and $\Dcal_\epsilon = \left\lbrace \DeltaG : \norm{\DeltaG}_{\infty} < \epsilon \right\rbrace$. Then $\Kcal_\epsilon$ is parameterized by $\{\hat{\Ybf}, \hat{\Wbf}, \hat{\Ubf}, \hat{\Zbf} \}$ that satisfies \cor{IOP} and
\begin{align*}
\norm{\hat{\Ubf}}_{\infty} \leq \epsilon^{-1},
\end{align*}
and the controller is given by $\hat{\Kbf} = \hat{\Ubf} \hat{\Ybf}^{-1}$
\end{corollary}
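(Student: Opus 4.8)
\emph{Proof proposal.} The plan is to regard the plant perturbation as an additive perturbation of the realization in \fig{realization-G-K}, apply \lem{perturbed-S} to the resulting stability matrix, and reduce robust internal stability to a small-gain bound on $\hat{\Ubf}$.

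First I would fix a controller $\Kbf$ and note that membership in $\Kcal_\epsilon$ forces, in particular (take $\DeltaG = O$), that $\Kbf$ internally stabilizes the nominal plant $\Gbf$; hence by \cor{IOP} it has a unique input-output representation $\{\hat{\Ybf},\hat{\Ubf},\hat{\Wbf},\hat{\Zbf}\}$, and these blocks are exactly the nominal stability matrix $\Snm = \mat{\hat{\Ybf} & \hat{\Wbf}\\ \hat{\Ubf} & \hat{\Zbf}}$ of $\Rnm = \mat{O & \Gbf\\ \Kbf & O}$ with $\etabf = \mat{\ybf\\ \ubf}$. Replacing $\Gbf$ by $\Gbf(\DeltaG) = \Gbf + \DeltaG$ turns $\Rnm$ into $\Rtru = \Rnm + \Deltabf$ with $\Deltabf = \mat{O & \DeltaG\\ O & O}$, an additive perturbation, so \lem{perturbed-S} gives $\Stru = (I-\Snm\Deltabf)^{-1}\Snm$.

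Next I would carry out the block reduction. Since $\Snm\Deltabf = \mat{O & \hat{\Ybf}\DeltaG\\ O & \hat{\Ubf}\DeltaG}$, the matrix $I - \Snm\Deltabf$ is block upper-triangular with diagonal blocks $I$ and $I - \hat{\Ubf}\DeltaG$; therefore $\Stru$ exists iff $I - \hat{\Ubf}\DeltaG$ is invertible in $\Rp$, and, because $\hat{\Ybf},\hat{\Ubf},\hat{\Wbf},\hat{\Zbf}\in\RHinf$ (from \cor{IOP}) and $\DeltaG\in\RHinf$, one gets $\Stru\in\RHinf$ if and only if $(I-\hat{\Ubf}\DeltaG)^{-1}\in\RHinf$, the nontrivial implication following from a standard push-through identity. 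The off-diagonal blocks $\Gbf+\DeltaG$ and $\Kbf$ of $\Rtru$ are automatically proper ($\Gbf\in\Rsp$, $\DeltaG\in\RHinf$, $\Kbf\in\Rp$), so by condition \eqn{R-S-condition} the controller $\Kbf$ internally stabilizes $\Gbf(\DeltaG)$ precisely when $(I-\hat{\Ubf}\DeltaG)^{-1}\in\RHinf$. Consequently $\Kbf\in\Kcal_\epsilon$ iff $(I-\hat{\Ubf}\DeltaG)^{-1}\in\RHinf$ for every $\DeltaG$ with $\norm{\DeltaG}_\infty<\epsilon$.

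Finally I would invoke the robust small-gain theorem: for $\hat{\Ubf}\in\RHinf$, $(I-\hat{\Ubf}\DeltaG)^{-1}\in\RHinf$ for all $\DeltaG\in\RHinf$ with $\norm{\DeltaG}_\infty<\epsilon$ if and only if $\norm{\hat{\Ubf}}_\infty\le\epsilon^{-1}$; sufficiency is immediate since then $\norm{\hat{\Ubf}\DeltaG}_\infty<1$. Combined with the preceding paragraph, this yields the claimed parameterization, with $\Kbf = \hat{\Ubf}\hat{\Ybf}^{-1}$ carried over from \cor{IOP}. I expect the only genuinely nontrivial point to be the necessity half of the small-gain step — constructing, when $\norm{\hat{\Ubf}}_\infty>\epsilon^{-1}$, an explicit $\DeltaG\in\Dcal_\epsilon$ for which $I-\hat{\Ubf}\DeltaG$ loses invertibility, i.e., the classical worst-case perturbation aligned with the peak-gain direction of $\hat{\Ubf}$; everything else is bookkeeping made routine by \lem{perturbed-S}.
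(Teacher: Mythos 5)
Your proposal is correct and follows essentially the same route as the paper: model the plant uncertainty as the additive realization perturbation $\Deltabf = \mat{O & \DeltaG\\ O & O}$, apply \lem{perturbed-S}, observe that the relevant inverse is block-triangular with the only nontrivial diagonal block being $I-\hat{\Ubf}\DeltaG$ (the paper uses the mirror form $\Snm(I-\Deltabf\Snm)^{-1}$ and hence $I-\DeltaG\hat{\Ubf}$, equivalent by the push-through identity you cite), and close with the small gain theorem. Your extra checks on propriety and on the necessity half of the small-gain step are sound refinements of the same argument.
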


\begin{proof}
Let
\begin{align*}
\Snm = \mat{
\hat{\Ybf} & \hat{\Wbf}\\
\hat{\Ubf} & \hat{\Zbf}
}, \quad\quad
\Deltabf = \mat{
O & \DeltaG \\
O & O
}.
\end{align*}
\lem{perturbed-S} implies that the system is internally stable if and only if
\begin{align*}
\Stru = \Snm(I - \Deltabf \Snm)^{-1} \in \RHinf
\end{align*}
for all $\Deltabf \in \Dcal_\epsilon$. Since $O \in \Dcal_\epsilon$, we need $\Snm \in \RHinf$ (which leads to the first three conditions, see \cite{tseng2021realization}) and
\begin{align*}
&(I - \Deltabf \Snm)^{-1}
=
\mat{
I - \DeltaG \hat{\Ubf} & - \DeltaG \hat{\Zbf}\\
O & I
}^{-1}\\
=&
\mat{
(I - \DeltaG \hat{\Ubf})^{-1} & (I - \DeltaG \hat{\Ubf})^{-1} \DeltaG \hat{\Zbf} \\
O & I
} \in \RHinf.
\end{align*}
Since $\DeltaG$ has a bounded $H_{\infty}$ norm, we know $\DeltaG \in \RHinf$. By the small gain theorem, $(I - \DeltaG \hat{\Ubf})^{-1} \in \RHinf$ for all $\DeltaG \in \Dcal_\epsilon$ if and only if $\norm{\hat{\Ubf}}_{\infty} \leq \epsilon^{-1}$, which concludes the proof.
\end{proof}

\subsection{Robust System Level Synthesis (SLS)}

\renewcommand{\paragraph}[1]{\vspace*{0.5\baselineskip}\noindent\textbf{#1}}
\def\DeltaSLS{\Deltabf^{\rm SLS}}

In addition to the classic SLS/SLP as in \sec{controller_synthesis-SLS}, Recent studies investigate robust controller synthesis problem with perturbed SLP under both state-feedback and output-feedback settings. We show below how to derive those results using \lem{perturbed-S}.

\paragraph{State-Feedback:} The following robust result from \cite[Theorem 2]{matni2017Scalable} and \cite[Theorem 4.3]{anderson2019system} examines the perturbed state-feedback SLP.

\begin{corollary}\label{cor:robust-sf-SLS}
For the system with realization as in \fig{realization-state-feedback}, let $(\hat{\Phibf}_\xbf, \hat{\Phibf}_\ubf, \DeltaSLS)$ be a solution to
\begin{align*}
\mat{zI - A & -B}
\mat{\hat{\Phibf}_\xbf\\ \hat{\Phibf}_\ubf} = I + \DeltaSLS,\quad  \hat{\Phibf}_\xbf, \hat{\Phibf}_\ubf \in z^{-1}\RHinf.
\end{align*}
Then, the controller realization
\begin{align*}
\hat{\deltabf}_\xbf = \xbf - \hat{\xbf}, \quad
\ubf = z \hat{\Phibf}_\xbf\hat{\deltabf}_\xbf, \quad
\hat{\xbf} = (z \hat{\Phibf}_\xbf - I)\hat{\deltabf}_\xbf
\end{align*}
internally stabilizes the system if and only if $(I + \DeltaSLS)^{-1}$ is stable. Furthermore, the actual system responses achieved are given by
\begin{align*}
\mat{\xbf \\ \ubf} =
\mat{\hat{\Phibf}_\xbf\\ \hat{\Phibf}_\ubf} (I + \DeltaSLS)^{-1} \dbf_\xbf.
\end{align*}
\end{corollary}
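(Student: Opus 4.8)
The plan is to mimic the other corollaries in this section: build the realization matrix of the closed loop formed by the plant of \fig{realization-state-feedback} together with the stated controller realization, apply \lem{R-S} to obtain the stability matrix $\Sbf$ in closed form, and then read off both the internal-stability certificate and the achieved responses. Taking the internal state to be $\etabf = \mat{\xbf\\ \ubf\\ \hat{\deltabf}_\xbf}$ and eliminating $\hat{\xbf}$ through $\hat{\deltabf}_\xbf = \xbf - \hat{\xbf} = \xbf - (z\hat{\Phibf}_\xbf - I)\hat{\deltabf}_\xbf$, the controller output $\ubf = z\hat{\Phibf}_\ubf\hat{\deltabf}_\xbf$ (the ``$z\hat{\Phibf}_\xbf$'' printed in the statement appears to be a typo for $z\hat{\Phibf}_\ubf$, since $\ubf = z\hat{\Phibf}_\xbf\hat{\deltabf}_\xbf$ would force $\ubf = \xbf$) and the plant relation $z\xbf = A\xbf + B\ubf + \dbf_\xbf$ give
\begin{align*}
\Rbf = \mat{
A + (1-z)I & B & O\\
O & O & z\hat{\Phibf}_\ubf\\
I & O & I - z\hat{\Phibf}_\xbf
}.
\end{align*}
This is exactly the realization analysed in \cor{Li2020} with $\Pbf_c = \hat{\Phibf}_\xbf$, $\Mbf_c = \hat{\Phibf}_\ubf$, the only difference being that $\hat{\Phibf}_\xbf,\hat{\Phibf}_\ubf$ satisfy the achievability relation only up to the residual $\DeltaSLS$, i.e. $\mat{zI-A & -B}\mat{\hat{\Phibf}_\xbf\\ \hat{\Phibf}_\ubf} = I + \DeltaSLS$.

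Next I would solve $(I-\Rbf)\Sbf = I$ exactly as in the proof of \cor{Li2020}. Eliminating the $\xbf$- and $\ubf$-rows in favour of the $\hat{\deltabf}_\xbf$-row, the block-column of $\Sbf$ driven by $\dbf_\xbf$ collapses to $z(I+\DeltaSLS)\,\Sbf_{\hat{\deltabf}_\xbf,\xbf} = I$, hence $\Sbf_{\hat{\deltabf}_\xbf,\xbf} = z^{-1}(I+\DeltaSLS)^{-1}$, and consequently $\Sbf_{\xx} = \hat{\Phibf}_\xbf(I+\DeltaSLS)^{-1}$ and $\Sbf_{\ux} = \hat{\Phibf}_\ubf(I+\DeltaSLS)^{-1}$ — which is precisely the claimed formula for the response of $\mat{\xbf\\ \ubf}$ to the process disturbance $\dbf_\xbf$. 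Repeating the elimination for the other two block-columns writes every entry of $\Sbf$ as a product of one of $\hat{\Phibf}_\xbf$, $\hat{\Phibf}_\ubf$, a constant matrix, or $I - z^{-1}A$, times $(I+\DeltaSLS)^{-1}$, times at most one factor of $z$.

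The stability equivalence is then immediate. Because $\hat{\Phibf}_\xbf, \hat{\Phibf}_\ubf \in z^{-1}\RHinf$ (so $z\hat{\Phibf}_\xbf, z\hat{\Phibf}_\ubf \in \RHinf$), the closed-form $\Sbf$ shows that $(I+\DeltaSLS)^{-1}$ stable implies $\Sbf \in \RHinf$, i.e. internal stability; conversely $z\,\Sbf_{\hat{\deltabf}_\xbf,\xbf} = (I+\DeltaSLS)^{-1}$ is proper (being $z$ times a strictly proper matrix) and has the same poles as the stable $\Sbf_{\hat{\deltabf}_\xbf,\xbf}$, so $\Sbf \in \RHinf$ forces $(I+\DeltaSLS)^{-1} \in \RHinf$. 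I expect the only point requiring care to be this properness/pole bookkeeping in the ``only if'' direction (plus noticing the $z\hat{\Phibf}_\ubf$ typo); everything else parallels \cor{sf-SLS} and \cor{Li2020}.

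Finally, as a sanity check and to align with the section's use of \lem{perturbed-S}, I would note the alternative derivation: writing $\Rbf$ as an additive perturbation $\Rnm + \Deltabf$ of the ideal state-feedback SLS realization, the perturbation is confined to the $\hat{\deltabf}_\xbf$-block-column, so $I - \Snm\Deltabf$ is block upper-triangular with diagonal blocks $I$, $I$, and $I + \DeltaSLS$; \lem{perturbed-S} then gives $\Stru = (I-\Snm\Deltabf)^{-1}\Snm$, which lies in $\RHinf$ iff $(I+\DeltaSLS)^{-1}$ is stable, recovering the same conclusion.
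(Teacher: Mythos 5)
Your proof is correct, and it takes a modestly but genuinely different route from the paper's. The paper never writes down the three-signal realization of the proposed controller: it works entirely on the two-signal loop of \fig{realization-state-feedback} closed with the nominal controller $\hat{\Kbf}=\hat{\Phibf}_\ubf\hat{\Phibf}_\xbf^{-1}$, observes that $(\hat{\Phibf}_\xbf,\hat{\Phibf}_\ubf,\DeltaSLS)$ solves $(I-\Rbf)\Sbf = \mathrm{diag}(I+\DeltaSLS,\,I)$, right-multiplies by the inverse of that block-diagonal matrix to read off the response $\hat{\Phibf}_\xbf(I+\DeltaSLS)^{-1}$, $\hat{\Phibf}_\ubf(I+\DeltaSLS)^{-1}$ via \lem{R-S}, and then transfers internal stability to the $\hat{\deltabf}_\xbf$-based controller realization only by citing the transformation to \fig{realization-sf-SLS} from earlier work. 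You instead assemble the augmented realization containing $\hat{\deltabf}_\xbf$ directly (correctly recognizing it as the $\Rbf$ of \cor{Li2020} with $\Pbf_c=\hat{\Phibf}_\xbf$, $\Mbf_c=\hat{\Phibf}_\ubf$) and solve $(I-\Rbf)\Sbf=I$ from scratch; this makes the claim about the \emph{controller realization} self-contained rather than deferred, and it yields a cleaner ``only if'' direction, since $(I+\DeltaSLS)^{-1}=z\,\Sbf_{\dx}$ appears explicitly as an entry of $\Sbf$ (the paper's converse step, asserting that $\hat{\Phibf}_\xbf(I+\DeltaSLS)^{-1},\hat{\Phibf}_\ubf(I+\DeltaSLS)^{-1}\in z^{-1}\RHinf$ forces $(I+\DeltaSLS)^{-1}\in\RHinf$, is left unjustified). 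Your closing remark via \lem{perturbed-S} is consistent with the spirit of this section but is not the route the paper takes here. Two small notes: you are right that $\ubf = z\hat{\Phibf}_\xbf\hat{\deltabf}_\xbf$ in the statement is a typo for $\ubf = z\hat{\Phibf}_\ubf\hat{\deltabf}_\xbf$; and your properness bookkeeping for $z\,\Sbf_{\dx}$ quietly assumes $\Sbf_{\dx}$ is strictly proper, which holds only once $(I+\DeltaSLS)^{-1}$ is known to be proper --- but since the corollary asks only for stability (absence of unstable poles, which multiplication by $z$ does not create), this does not affect the conclusion, and the paper elides the same point.
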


\begin{proof}
By definition, $(\hat{\Phibf}_\xbf, \hat{\Phibf}_\ubf, \DeltaSLS)$ is also a solution to
\begin{align*}
\mat{zI-A & -B\\
-\hat{\Kbf} & I}
\mat{\hat{\Phibf}_\xbf & \Sbf_{\xu} \\
\hat{\Phibf}_\ubf & \Sbf_{\uu}
} =
\mat{I + \DeltaSLS & O \\
O & I
}.
\end{align*}
Therefore,
by \lem{R-S}, the stability matrix for the realization in \fig{realization-state-feedback} is
\begin{align*}
&\mat{\hat{\Phibf}_\xbf & \Sbf_{\xu} \\
\hat{\Phibf}_\ubf & \Sbf_{\uu}
}
\mat{I + \DeltaSLS & O \\
O & I
}^{-1}\\
=& \mat{\hat{\Phibf}_\xbf(I + \DeltaSLS)^{-1} & \Sbf_{\xu} \\
\hat{\Phibf}_\ubf(I + \DeltaSLS)^{-1} & \Sbf_{\uu}
},
\end{align*}
which derives the system response.
As shown in \cite{tseng2021realization}, the nominal controller $\hat{\Kbf}$ is internally stabilizing if and only if
\begin{align*}
\hat{\Phibf}_\xbf(I + \DeltaSLS)^{-1}, \hat{\Phibf}_\ubf(I + \DeltaSLS)^{-1} \in z^{-1}\RHinf.
\end{align*}
In other words, since $\hat{\Phibf}_\xbf, \hat{\Phibf}_\ubf \in z^{-1}\RHinf$, $\Kbf$ is internally stabilizing if and only if $(I + \DeltaSLS)^{-1} \in \RHinf$. The internal stability of the proposed controller can then be showed by transforming the realization in \fig{realization-state-feedback} to the one in \fig{realization-sf-SLS} as done in \cite{tseng2021realization}.
\end{proof}

\paragraph{Output-Feedback:} For output-feedback systems as in \fig{realization-output-feedback}, the two corollaries below from \cite[Lemma 4.3, Corollary 4.4]{boczar2018finite} concern the controller resulting from perturbed affine space in \cor{of-SLS-zero-D}.

\begin{corollary}\label{cor:robust-of-SLS-condition}
For the system $\Gbf$ with realization as in \fig{realization-output-feedback} with $D=O$, let $\hatOFqple$ satisfy \eqn{of-SLS-constraints-1}, \eqn{of-SLS-constraints-3}, and
\begin{align}
\mat{
\hat{\Phibf}_{\xx} & \hat{\Phibf}_{\xy}\\
\hat{\Phibf}_{\ux} & \hat{\Phibf}_{\uy}
}
\mat{
zI - A \\ -C
}
=&
\mat{I + \DeltaSLS_1 \\ \DeltaSLS_2}
\label{eqn:robust-of-SLS-constraint}
\end{align}
and let the system response be given by
\begin{align*}
&\mat{
\Phibf_{\xx}(\DeltaSLS) & \Phibf_{\xy}(\DeltaSLS)\\
\Phibf_{\ux}(\DeltaSLS) & \Phibf_{\uy}(\DeltaSLS)
}\\
=&
\mat{
(I + \DeltaSLS_1)^{-1} & O\\
-\DeltaSLS_2 (I + \DeltaSLS_1)^{-1} & I
}
\mat{
\hat{\Phibf}_{\xx} & \hat{\Phibf}_{\xy} \\
\hat{\Phibf}_{\ux} & \hat{\Phibf}_{\uy}
}
\end{align*}
where by assumption $(I + \DeltaSLS_1)^{-1}$ exists and is in $\RHinf$. Then $\{\Phibf_{\xx}(\DeltaSLS)$, $\Phibf_{\ux}(\DeltaSLS)$, $\Phibf_{\xy}(\DeltaSLS)$, $\Phibf_{\uy}(\DeltaSLS)\}$ satisfies \cor{of-SLS-zero-D} for $\Gbf$. Furthermore, suppose $C$ is subject to an additive disturbance $\Deltabf_C$, i.e.,
\begin{align*}
C(\Deltabf_C) = C + \Deltabf_C,
\end{align*}
and
\begin{align*}
\DeltaSLS_1 = -\hat{\Phibf}_{\xy}\Deltabf_C,\quad\quad
\DeltaSLS_2 = -\hat{\Phibf}_{\xy}\Deltabf_C.
\end{align*}
We then have $\hatOFqple$ also satisfies \cor{of-SLS-zero-D} for the perturbed system.
\end{corollary}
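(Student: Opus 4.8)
The plan is to verify Part~1 by direct substitution into the three defining relations of \cor{of-SLS-zero-D} for the plant $\Gbf$, and to settle the ``furthermore'' part by a single expansion of the perturbed sensing constraint. It helps first to observe that the prefactor in the stated correction is a block-triangular inverse, so that
\begin{align*}
\mat{\Phibf_{\xx}(\DeltaSLS) & \Phibf_{\xy}(\DeltaSLS)\\ \Phibf_{\ux}(\DeltaSLS) & \Phibf_{\uy}(\DeltaSLS)}
= \mat{I+\DeltaSLS_1 & O\\ \DeltaSLS_2 & I}^{-1}
\mat{\hat{\Phibf}_{\xx} & \hat{\Phibf}_{\xy}\\ \hat{\Phibf}_{\ux} & \hat{\Phibf}_{\uy}}.
\end{align*}
With this, the sensing relation for the corrected quadruple is immediate: by \eqn{robust-of-SLS-constraint} the hatted quadruple sends $\mat{zI-A\\ -C}$ to $\mat{I+\DeltaSLS_1\\ \DeltaSLS_2}=\mat{I+\DeltaSLS_1 & O\\ \DeltaSLS_2 & I}\mat{I\\ O}$, and premultiplying by the inverse prefactor collapses this to $\mat{I\\ O}$ --- precisely the slack-cancellation the correction is built for.

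The main obstacle is the actuation relation $\mat{zI-A & -B}\mat{\Phibf_{\xx}(\DeltaSLS) & \Phibf_{\xy}(\DeltaSLS)\\ \Phibf_{\ux}(\DeltaSLS) & \Phibf_{\uy}(\DeltaSLS)}=\mat{I & O}$, which premultiplication by the correction does not obviously leave intact. Since $\mat{zI-A & -B}\mat{\hat{\Phibf}_{\xx} & \hat{\Phibf}_{\xy}\\ \hat{\Phibf}_{\ux} & \hat{\Phibf}_{\uy}}=\mat{I & O}$ already holds by \eqn{of-SLS-constraints-1}, it suffices to show $\mat{zI-A & -B}\mat{I+\DeltaSLS_1 & O\\ \DeltaSLS_2 & I}=\mat{zI-A & -B}$, i.e.\ the identity $(zI-A)\DeltaSLS_1=B\DeltaSLS_2$. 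I would prove it by reading $\DeltaSLS_1=\hat{\Phibf}_{\xx}(zI-A)-\hat{\Phibf}_{\xy}C-I$ and $\DeltaSLS_2=\hat{\Phibf}_{\ux}(zI-A)-\hat{\Phibf}_{\uy}C$ off \eqn{robust-of-SLS-constraint}, multiplying the first on the left by $zI-A$, and eliminating $(zI-A)\hat{\Phibf}_{\xx}=I+B\hat{\Phibf}_{\ux}$ and $(zI-A)\hat{\Phibf}_{\xy}=B\hat{\Phibf}_{\uy}$ via \eqn{of-SLS-constraints-1}; the $(zI-A)$ and identity terms cancel, leaving $B\bigl(\hat{\Phibf}_{\ux}(zI-A)-\hat{\Phibf}_{\uy}C\bigr)=B\DeltaSLS_2$. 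Thus the coupling between $\DeltaSLS_1$ and $\DeltaSLS_2$ that makes the correction valid is forced by the hatted quadruple obeying the exact actuation constraint.

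The membership conditions \eqn{of-SLS-constraints-3} for the corrected quadruple then follow from the explicit formulas for its blocks: $(I+\DeltaSLS_1)^{-1}\in\RHinf$ by hypothesis, both $I+\DeltaSLS_1=\hat{\Phibf}_{\xx}(zI-A)-\hat{\Phibf}_{\xy}C$ and $\DeltaSLS_2=\hat{\Phibf}_{\ux}(zI-A)-\hat{\Phibf}_{\uy}C$ lie in $\RHinf$ (the products with $zI-A$ stay in $\RHinf$ since $\hat{\Phibf}_{\xx},\hat{\Phibf}_{\ux}\in z^{-1}\RHinf$), and $z^{-1}\RHinf$ is closed under left multiplication by $\RHinf$ and under addition, so $\Phibf_{\xx}(\DeltaSLS),\Phibf_{\xy}(\DeltaSLS),\Phibf_{\ux}(\DeltaSLS)\in z^{-1}\RHinf$ and $\Phibf_{\uy}(\DeltaSLS)\in\RHinf$; \cor{of-SLS-zero-D} then certifies a proper internally stabilizing controller for $\Gbf$, which finishes Part~1.

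For Part~2, \eqn{of-SLS-constraints-1} and \eqn{of-SLS-constraints-3} mention neither $C$ nor any correction, so they remain valid for $\hatOFqple$ when $C$ is replaced by $C+\Deltabf_C$; only the sensing relation changes, and expanding $\mat{\hat{\Phibf}_{\xx} & \hat{\Phibf}_{\xy}\\ \hat{\Phibf}_{\ux} & \hat{\Phibf}_{\uy}}\mat{zI-A\\ -(C+\Deltabf_C)}$ and invoking \eqn{robust-of-SLS-constraint} yields $\mat{I+\DeltaSLS_1-\hat{\Phibf}_{\xy}\Deltabf_C\\ \DeltaSLS_2-\hat{\Phibf}_{\uy}\Deltabf_C}$ on the right, which the stated relations between $(\DeltaSLS_1,\DeltaSLS_2)$ and $(\hat{\Phibf}_{\xy}\Deltabf_C,\hat{\Phibf}_{\uy}\Deltabf_C)$ are chosen to reduce to $\mat{I\\ O}$; hence $\hatOFqple$ satisfies \cor{of-SLS-zero-D} for $\Gbf(\Deltabf_C)$. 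I expect no real difficulty in Part~2 beyond tracking which slack term pairs with which block of $\Deltabf_C$.
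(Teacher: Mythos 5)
Your Part~1 argument is correct and in substance the same as the paper's: both hinge on the identity $(zI-A)\DeltaSLS_1 = B\DeltaSLS_2$ (the paper gets it by evaluating $\mat{zI-A & -B}\hat{\Phibf}\mat{zI-A\\ -C}$ two ways, you get it by substituting the explicit expressions for $\DeltaSLS_1,\DeltaSLS_2$ and eliminating via \eqn{of-SLS-constraints-1}; the computations are interchangeable), and both then use the block-triangular inverse to collapse the sensing constraint and the memberships $\DeltaSLS_2\in\RHinf$, $(I+\DeltaSLS_1)^{-1}\in\RHinf$ to verify \eqn{of-SLS-constraints-3}. The only real difference is presentational: the paper routes everything through the full $3\times3$ realization--stability machinery of \lem{R-S} (which it needs anyway for \cor{robust-of-SLS-controller-condition}), whereas you work directly on the $2\times2$ blocks; nothing is lost for the statement as given. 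One thing you should not have glossed over in Part~2: your own expansion yields $\hat{\Phibf}\mat{zI-A\\ -(C+\Deltabf_C)} = \mat{I+\DeltaSLS_1-\hat{\Phibf}_{\xy}\Deltabf_C\\ \DeltaSLS_2-\hat{\Phibf}_{\uy}\Deltabf_C}$, so the cancellation requires $\DeltaSLS_1 = +\hat{\Phibf}_{\xy}\Deltabf_C$ and $\DeltaSLS_2 = +\hat{\Phibf}_{\uy}\Deltabf_C$, whereas the statement writes minus signs and repeats the subscript ${\xbf\ybf}$ in both slack terms. That is evidently a typo in the statement (the paper's own one-line treatment of this part never checks it), but your claim that the stated relations ``are chosen to reduce to $\mat{I\\ O}$'' is not literally true as written; you should flag and correct the sign and subscript rather than absorb them.
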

\begin{corollary}\label{cor:robust-of-SLS-controller-condition}
Suppose $\hatOFqple$ satisfies the conditions in \cor{robust-of-SLS-condition}. Then, the controller
\begin{align*}
\hat{\Kbf} = \hat{\Phibf}_{\uy} - \hat{\Phibf}_{\ux} \hat{\Phibf}_{\xx}^{-1} \hat{\Phibf}_{\xy}
\end{align*}
stabilizes the system and achieves the closed-loop system response if and only if $(I + \DeltaSLS_1)^{-1} \in \RHinf$.
\end{corollary}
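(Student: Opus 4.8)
The plan is to reduce \cor{robust-of-SLS-controller-condition} to the nominal output-feedback result \cor{of-SLS-zero-D}, using the change of variables already introduced in \cor{robust-of-SLS-condition}. That corollary exhibits the perturbed quadruple as $\Phibf_{\xx}(\DeltaSLS) = (I+\DeltaSLS_1)^{-1}\hat{\Phibf}_{\xx}$, $\Phibf_{\xy}(\DeltaSLS) = (I+\DeltaSLS_1)^{-1}\hat{\Phibf}_{\xy}$, $\Phibf_{\ux}(\DeltaSLS) = \hat{\Phibf}_{\ux} - \DeltaSLS_2(I+\DeltaSLS_1)^{-1}\hat{\Phibf}_{\xx}$, $\Phibf_{\uy}(\DeltaSLS) = \hat{\Phibf}_{\uy} - \DeltaSLS_2(I+\DeltaSLS_1)^{-1}\hat{\Phibf}_{\xy}$, and asserts that this quadruple satisfies the affine equations of \cor{of-SLS-zero-D} for $\Gbf$. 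First I would record the one-line algebraic check that the controller this quadruple induces, namely $\Phibf_{\uy}(\DeltaSLS) - \Phibf_{\ux}(\DeltaSLS)\Phibf_{\xx}(\DeltaSLS)^{-1}\Phibf_{\xy}(\DeltaSLS)$, is exactly $\hat{\Kbf}$: since $\Phibf_{\xx}(\DeltaSLS)^{-1}\Phibf_{\xy}(\DeltaSLS) = \hat{\Phibf}_{\xx}^{-1}\hat{\Phibf}_{\xy}$, the $\DeltaSLS_2(I+\DeltaSLS_1)^{-1}\hat{\Phibf}_{\xy}$ contributions cancel, leaving $\hat{\Phibf}_{\uy} - \hat{\Phibf}_{\ux}\hat{\Phibf}_{\xx}^{-1}\hat{\Phibf}_{\xy} = \hat{\Kbf}$.

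With this identification, \cor{of-SLS-zero-D} applies directly: a quadruple lying in the affine subspace for $\Gbf$ produces a proper internally stabilizing controller that achieves precisely the closed-loop response encoded by that quadruple if and only if the quadruple lies in the required stable subspaces; and because the affine equations already fix the correct strict properness, this is the same as the four components all lying in $\RHinf$. Conversely, if $\hat{\Kbf}$ internally stabilizes $\Gbf$, then its achieved closed-loop maps are uniquely determined and, being the unique solution of those affine equations, must coincide with the perturbed quadruple, which is therefore stable. Hence $\hat{\Kbf}$ stabilizes the system and achieves the stated response if and only if $\Phibf_{\xx}(\DeltaSLS), \Phibf_{\ux}(\DeltaSLS), \Phibf_{\xy}(\DeltaSLS), \Phibf_{\uy}(\DeltaSLS) \in \RHinf$. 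This step parallels the state-feedback reasoning in the proof of \cor{robust-sf-SLS} and relies on \cite{tseng2021realization} for the equivalence between stability of an affine-constrained quadruple and internal stability of the controller it induces.

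It then remains to check that this $\RHinf$-membership is equivalent to $(I+\DeltaSLS_1)^{-1}\in\RHinf$. Since $\hat{\Phibf}_{\xy}\in z^{-1}\RHinf$ and $\Deltabf_C$ is a constant matrix, $\DeltaSLS_1 = -\hat{\Phibf}_{\xy}\Deltabf_C$ and $\DeltaSLS_2$ are both in $z^{-1}\RHinf$. If $(I+\DeltaSLS_1)^{-1}\in\RHinf$, then the four displayed formulas express $\Phibf_{\xx}(\DeltaSLS), \dots, \Phibf_{\uy}(\DeltaSLS)$ as sums and products of elements of $\RHinf$, so all four are stable. For the converse I would use the identity $(I+\DeltaSLS_1)^{-1} = I - (I+\DeltaSLS_1)^{-1}\DeltaSLS_1 = I + (I+\DeltaSLS_1)^{-1}\hat{\Phibf}_{\xy}\Deltabf_C = I + \Phibf_{\xy}(\DeltaSLS)\Deltabf_C$, so that $\Phibf_{\xy}(\DeltaSLS)\in\RHinf$ already forces $(I+\DeltaSLS_1)^{-1}\in\RHinf$. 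Chaining the two equivalences yields the claim.

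The step I expect to be the main obstacle is the ``only if'' half of the middle paragraph: making rigorous that ``$\hat{\Kbf}$ internally stabilizes the plant'' is genuinely the same statement as ``the perturbed quadruple is stable.'' This needs the fact that a quadruple satisfying the output-feedback affine equations is exactly the closed-loop response of the induced controller, together with uniqueness of closed-loop maps; once that is in place, the cancellation in the first step and the bookkeeping in the last step are routine.
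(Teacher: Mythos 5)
Your overall route is the one the paper takes: reduce the claim to the nominal parameterization \cor{of-SLS-zero-D} by observing that the perturbed quadruple of \cor{robust-of-SLS-condition} lies in the affine subspace for $\Gbf$ and induces exactly $\hat{\Kbf}$ --- your cancellation $\Phibf_{\xx}(\DeltaSLS)^{-1}\Phibf_{\xy}(\DeltaSLS)=\hat{\Phibf}_{\xx}^{-1}\hat{\Phibf}_{\xy}$ is correct and is the same identification the paper obtains by computing $\Stru$ via \lem{R-S} and its transformation technique --- and then trade stability of the achieved closed-loop maps for stability of $(I+\DeltaSLS_1)^{-1}$. The forward half of that trade is fine: $\DeltaSLS_2=\hat{\Phibf}_{\ux}(zI-A)-\hat{\Phibf}_{\uy}C\in\RHinf$, so $(I+\DeltaSLS_1)^{-1}\in\RHinf$ renders all four perturbed maps stable.

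The gap is in the converse of your last paragraph. You extract $(I+\DeltaSLS_1)^{-1}=I+\Phibf_{\xy}(\DeltaSLS)\Deltabf_C$ from $\DeltaSLS_1=-\hat{\Phibf}_{\xy}\Deltabf_C$, but that formula for $\DeltaSLS_1$ is only the ``furthermore'' specialization of \cor{robust-of-SLS-condition}; the corollary (and its source, \cite[Corollary 4.4]{boczar2018finite}) is stated for arbitrary $\DeltaSLS_1,\DeltaSLS_2$ defined by \eqn{robust-of-SLS-constraint}, and the paper's own proof of this corollary never invokes $\Deltabf_C$. For general $\DeltaSLS_1$ there is no constant factor to peel off, and the natural substitute identity collapses: $\Phibf_{\xx}(\DeltaSLS)(zI-A)-\Phibf_{\xy}(\DeltaSLS)C=(I+\DeltaSLS_1)^{-1}(I+\DeltaSLS_1)=I$, which yields the identity matrix rather than $(I+\DeltaSLS_1)^{-1}$. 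So stability of the quadruple does not visibly force stability of $(I+\DeltaSLS_1)^{-1}$ by this kind of right-multiplication, and one has to worry about unstable pole--zero cancellation in $\Stru=\Gamma^{-1}\Snm$. The ``only if'' direction really needs to be run through the controller's internal realization (the $\hat{\deltabf}$-type signal whose disturbance-to-signal map is $(I+\DeltaSLS_1)^{-1}$ itself), exactly as in the state-feedback argument behind \cor{robust-sf-SLS}. To be fair, the paper is also terse on this very point --- it asserts the equivalence from the factorization $\Stru=\Gamma^{-1}\Snm$ without detail --- but your proposed substitute argument only covers the $\Deltabf_C$ case, not the corollary in the generality in which it is stated.
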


We now prove these two corollaries below by investigating $\Stru$.

\begin{proof}
By definition, $\hatOFqple$ is also a solution to
\begin{align*}
&\mat{
\hat{\Phibf}_{\xx} & \Sbf_{\xu} & \hat{\Phibf}_{\xy}\\
\hat{\Phibf}_{\ux} & \Sbf_{\uu} & \hat{\Phibf}_{\uy}\\
\Sbf_{\yx} & \Sbf_{\yu} & \Sbf_{\yy}
}
\mat{zI-A & -B & O\\
O & I & -\hat{\Kbf} \\
-C & -D & I}\\
=&
\mat{
I + \DeltaSLS_1 & O & O \\
\DeltaSLS_2 & I & O\\
O & O & I
}
\end{align*}
where $D = O$. Therefore, using the same transformation technique in \cite{tseng2021realization}, we can derive the nominal controller $\hat{\Kbf}$ in~\cor{robust-of-SLS-controller-condition} from
\begin{align*}
\mat{
\hat{\Phibf}_{\xx} & \hat{\Phibf}_{\xy}\\
\hat{\Phibf}_{\ux} & \hat{\Phibf}_{\uy}
}
\mat{zI-A & -B \\
-C & \hat{\Kbf}^{-1}}
=
\mat{
I + \DeltaSLS_1 & O \\
\DeltaSLS_2 & I
}.
\end{align*}
Also, by \lem{R-S}, the stability matrix for the realization in \fig{realization-output-feedback} is
\begin{align*}
&\mat{
\Phibf_{\xx}(\DeltaSLS) & \Sbf_{\xu}(\DeltaSLS) & \Phibf_{\xy}(\DeltaSLS)\\
\Phibf_{\ux}(\DeltaSLS) & \Sbf_{\uu}(\DeltaSLS) & \Phibf_{\uy}(\DeltaSLS)\\
\Sbf_{\yx}(\DeltaSLS) & \Sbf_{\yu}(\DeltaSLS) & \Sbf_{\yy}(\DeltaSLS)
}\\
=&
\mat{
I + \DeltaSLS_1 & O & O \\
\DeltaSLS_2 & I & O\\
O & O & I
}^{-1}
\mat{
\hat{\Phibf}_{\xx} & \Sbf_{\xu} & \hat{\Phibf}_{\xy}\\
\hat{\Phibf}_{\ux} & \Sbf_{\uu} & \hat{\Phibf}_{\uy}\\
\Sbf_{\yx} & \Sbf_{\yu} & \Sbf_{\yy}
}\\
=&
\mat{
(I + \DeltaSLS_1)^{-1} & O & O \\
-\DeltaSLS_2(I + \DeltaSLS_1)^{-1} & I & O\\
O & O & I
}
\mat{
\hat{\Phibf}_{\xx} & \Sbf_{\xu} & \hat{\Phibf}_{\xy}\\
\hat{\Phibf}_{\ux} & \Sbf_{\uu} & \hat{\Phibf}_{\uy}\\
\Sbf_{\yx} & \Sbf_{\yu} & \Sbf_{\yy}
},
\end{align*}
which derives the system response. Since
\begin{align*}
&\mat{
zI - A & -B
}
\mat{
\hat{\Phibf}_{\xx} & \hat{\Phibf}_{\xy}\\
\hat{\Phibf}_{\ux} & \hat{\Phibf}_{\uy}
}
\mat{
zI - A \\ -C
}\\
=&
\mat{
zI - A & -B
}
\mat{I + \DeltaSLS_1 \\ \DeltaSLS_2}\\
=&
\mat{I & O}
\mat{zI - A \\ -C},
\end{align*}
we have $(zI - A)\DeltaSLS_1 = B \DeltaSLS_2$. Therefore,
\begin{align*}
& \mat{
zI - A & -B
}
\mat{
\Phibf_{\xx}(\DeltaSLS) & \Phibf_{\xy}(\DeltaSLS)\\
\Phibf_{\ux}(\DeltaSLS) & \Phibf_{\uy}(\DeltaSLS)
}\\
=&
\mat{
zI - A & -B
}
\mat{
I + \DeltaSLS_1 & O \\
\DeltaSLS_2 & I
}^{-1}
\mat{
\hat{\Phibf}_{\xx} & \hat{\Phibf}_{\xy}\\
\hat{\Phibf}_{\ux} & \hat{\Phibf}_{\uy}
}\\
=&
\mat{
zI - A & -B
}
\mat{
\hat{\Phibf}_{\xx} & \hat{\Phibf}_{\xy}\\
\hat{\Phibf}_{\ux} & \hat{\Phibf}_{\uy}
}
\end{align*}

Also, \eqn{robust-of-SLS-constraint} and $z\hat{\Phibf}_{\ux}, \hat{\Phibf}_{\uy} \in \RHinf$ imply
\begin{align*}
\hat{\Phibf}_{\ux}(zI-A)-\hat{\Phibf}_{\uy}C = \DeltaSLS_2,
\end{align*}
and hence $\DeltaSLS_2 \in \RHinf$. Along with $(I + \DeltaSLS_1)^{-1} \in \RHinf$, we know that $\{\Phibf_{\xx}(\DeltaSLS)$, $\Phibf_{\ux}(\DeltaSLS)$, $\Phibf_{\xy}(\DeltaSLS)$, $\Phibf_{\uy}(\DeltaSLS)\}$ satisfies \cor{of-SLS-zero-D} through the same proof in \cite{tseng2021realization}.

When $C$ is perturbed, we have $\Rtru = \Rnm + \Deltabf$ where
\begin{align*}
\Deltabf = \mat{
O & O & O\\
O & O & O\\
\Deltabf_C & O & O\\
}
\end{align*}
and the result follows from \lem{perturbed-S}.

Since we have shown $\Stru \in \RHinf$ if and only if $(I + \DeltaSLS_1)^{-1}$ above, the nominal controller $\hat{\Kbf}$ stabilizes the system under the same condition, and \cor{robust-of-SLS-controller-condition} follows.
\end{proof}

\section{Applications and Beyond}\label{sec:applications}
In addition to unifying existing results, \lem{R-S} and \lem{perturbed-S} also allow us to easily derive robust results by extending the nominal results. Below, we provide some new results to demonstrate the effectiveness of the lemmas. On the other hand, although the conditions derived from \lem{R-S} and \lem{perturbed-S} in this work can unify a large portion of robust controller synthesis results in the literature, there are still results beyond its scope, which we briefly discuss at the end of this section.

\subsection{New Robust Results}
As examples, we derive the following robust results for output-feedback SLS that generalize \cor{robust-of-SLS-condition} and a condition for robust IOP.

\begin{corollary}\label{cor:robust-of-SLS-general}
For output-feedback systems as in \fig{realization-output-feedback} with arbitrary $D$, let $\hatOFqple$ satisfy \cor{of-SLS-zero-D}. Consider an additive perturbation $\Deltabf$ on the plant such that
\begin{align*}
A(\Deltabf) = A + \Deltabf_A,\quad&\quad B(\Deltabf) = B + \Deltabf_B,\\
C(\Deltabf) = C + \Deltabf_C,\quad&\quad D(\Deltabf) = D + \Deltabf_D.
\end{align*}
where $\Deltabf_A, \Deltabf_B, \Deltabf_C, \Deltabf_D \in \RHinf$. Then the nominal controller
\begin{align*}
\hat{\Kbf} = \hat{\Kbf}_0 \left( I + D \hat{\Kbf}_0\right)^{-1},
\end{align*}
where $\hat{\Kbf}_0 = \hat{\Phibf}_{\uy} - \hat{\Phibf}_{\ux} \hat{\Phibf}_{\xx}^{-1} \hat{\Phibf}_{\xy}$,
internally stabilizes the perturbed system if and only if $\Psibf \in \RHinf$ where
\begin{align*}
\Psibf = \left( I - \mat{
\Deltabf_A & \Deltabf_B\\
\Deltabf_C & \Deltabf_D
}
\mat{
\hat{\Phibf}_{\xx} & \hat{\Phibf}_{\xy}\\
\hat{\Phibf}_{\ux} & \hat{\Phibf}_{\uy}
} \right)^{-1}.
\end{align*}
\end{corollary}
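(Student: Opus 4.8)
The plan is to treat the perturbed plant as an additive perturbation of the realization and invoke \lem{perturbed-S}. Writing the realization of \fig{realization-output-feedback} with the nominal controller $\hat{\Kbf} = \hat{\Kbf}_0(I + D\hat{\Kbf}_0)^{-1}$, the perturbations $A + \Deltabf_A$, $B + \Deltabf_B$, $C + \Deltabf_C$, $D + \Deltabf_D$ touch only the plant blocks, so $\Rtru = \Rnm + \Deltabf$ with
\begin{align*}
\Deltabf &= \mat{
\Deltabf_A & \Deltabf_B & O\\
O & O & O\\
\Deltabf_C & \Deltabf_D & O
}
= L \bar{\Deltabf} R^\top,\\
\bar{\Deltabf} &= \mat{\Deltabf_A & \Deltabf_B\\ \Deltabf_C & \Deltabf_D},
\quad
L = \mat{I & O\\ O & O\\ O & I},
\quad
R = \mat{I & O\\ O & I\\ O & O}.
\end{align*}
Since $\Deltabf_A,\Deltabf_B,\Deltabf_C,\Deltabf_D$ and $\hat{\Kbf}$ are proper, the off-diagonal blocks of $\Rtru$ stay in $\Rp$, so by \eqn{R-S-condition} internal stability of the perturbed system is equivalent to $\Stru \in \RHinf$. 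By \cor{of-SLS}, $\hatOFqple$ and $\hat{\Kbf}$ determine a nominal stability matrix $\Snm \in \RHinf$ whose rows indexed by $\xbf,\ubf$ and columns indexed by $\xbf,\ybf$ form exactly the block $R^\top\Snm L = \mat{\hat{\Phibf}_{\xx} & \hat{\Phibf}_{\xy}\\ \hat{\Phibf}_{\ux} & \hat{\Phibf}_{\uy}}$, and \lem{perturbed-S} gives $\Stru = \Snm(I - \Deltabf\Snm)^{-1}$.

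The key step is that $\Deltabf$ is a low-rank perturbation, so the $3\times3$ inverse collapses onto the $2\times2$ object $\Psibf$. Indeed the $\ubf$-rows of $\Deltabf\Snm = L\bar{\Deltabf}R^\top\Snm$ vanish, so inverting $I - \Deltabf\Snm$ reduces to inverting the $\{\xbf,\ybf\}$-indexed block $I - \bar{\Deltabf}(R^\top\Snm L)$, which equals $\Psibf^{-1}$; equivalently, the push-through identity $(I - UV)^{-1} = I + U(I - VU)^{-1}V$ with $U = L$ and $V = \bar{\Deltabf}R^\top\Snm$ gives
\begin{align*}
(I - \Deltabf\Snm)^{-1} = I + L\,\Psibf\,\bar{\Deltabf}\,R^\top\Snm,
\qquad
\Stru = \Snm + \Snm L\,\Psibf\,\bar{\Deltabf}\,R^\top\Snm.
\end{align*}
This settles sufficiency: if $\Psibf$ exists and lies in $\RHinf$, then since $\Snm,\bar{\Deltabf} \in \RHinf$ and $L,R$ are constant matrices, $(I-\Deltabf\Snm)^{-1}$ and hence $\Stru$ are in $\RHinf$.

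For necessity I would use the companion identity $\Psibf = I + \bar{\Deltabf}\,R^\top\Stru L$, obtained from $\Psibf = I + \bar{\Deltabf}(R^\top\Snm L)\Psibf$ together with $(R^\top\Snm L)\Psibf = R^\top\Snm(I-\Deltabf\Snm)^{-1}L = R^\top\Stru L$, the middle step being the push-through $(I-\Deltabf\Snm)^{-1}L = L\Psibf$. First one observes that existence of $\Stru$ forces $I - \Deltabf\Snm$, hence $\Psibf^{-1} = I - \bar{\Deltabf}(R^\top\Snm L)$, to be invertible, so $\Psibf$ is well defined; then $R^\top\Stru L$ is a sub-block of $\Stru$, so $\Stru \in \RHinf$ puts it in $\RHinf$, and the identity gives $\Psibf \in \RHinf$. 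I expect necessity to be the main obstacle: the tempting shortcut $(I-\Deltabf\Snm)^{-1} = \Snm^{-1}\Stru$ fails because $\Snm^{-1} = I - \Rnm \notin \RHinf$, so $\Psibf$ must be recovered as the $2\times2$ core isolated by the push-through rather than from the $3\times3$ inverse, while keeping track of when the relevant inverses exist. Taking $D = O$ and $\Deltabf_A = \Deltabf_B = \Deltabf_D = O$ then recovers \cor{robust-of-SLS-condition}.
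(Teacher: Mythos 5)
Your proposal is correct and follows essentially the same route as the paper: the same additive perturbation $\Deltabf$ of the realization, the same appeal to \lem{perturbed-S}, and the same reduction of $(I-\Deltabf\Snm)^{-1}$ to the $2\times 2$ core $\Psibf$ (the paper obtains it by explicit block-triangular inversion in the $(\xbf,\ubf,\ybf)$ ordering, you by the push-through identity, which is the same computation). Your explicit handling of the necessity direction via $\Psibf = I + \bar{\Deltabf}R^\top\Stru L$ fills in an implication the paper leaves implicit, but it does not change the overall approach.
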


\begin{proof}
Since $\hatOFqple$ satisfies \cor{of-SLS-zero-D}, we have the nominal stability $\Snm \in \RHinf$ and the nominal controller $\hat{\Kbf}$ is given by \cite[Corollary 5]{tseng2021realization}.

Given the realization in \fig{realization-output-feedback}, the perturbation $\Deltabf$ can be expressed as
\begin{align*}
\Deltabf = \mat{
\Deltabf_A & \Deltabf_B & O\\
O & O & O\\
\Deltabf_C & \Deltabf_D & O
}.
\end{align*}
Since $\Snm \in \RHinf$, \lem{perturbed-S} requires $(I-\Deltabf\Snm)^{-1} \in \RHinf$, or the inverse of
\begin{align}
I - \mat{
\Deltabf_A & \Deltabf_B & O\\
O & O & O\\
\Deltabf_C & \Deltabf_D & O
}
\mat{
\hat{\Phibf}_{\xx} & \Snm_{\xu} & \hat{\Phibf}_{\xy}\\
\hat{\Phibf}_{\ux} & \Snm_{\uu} & \hat{\Phibf}_{\uy}\\
\Snm_{\yx} & \Snm_{\yu} & \Snm_{\yy}
}
\label{eqn:robust-of-SLS-condition}
\end{align}
should be in $\RHinf$. Computing the inverse of \eqn{robust-of-SLS-condition} is equivalent to computing
\begin{align*}
\mat{
\Psibf & -\Psibf
\mat{
\Deltabf_A & \Deltabf_B\\
\Deltabf_C & \Deltabf_D
}
\mat{\Snm_{\xu}\\ \Snm_{\uu}}\\
O & I
}.
\end{align*}
We know $\Deltabf_A, \Deltabf_B, \Deltabf_C, \Deltabf_D, \Snm_{\xu}, \Snm_{\uu} \in \RHinf$.
Therefore, $\hat{\Kbf}$ still internally stabilizes the perturbed plant if and only if $\Psibf \in \RHinf$, which concludes the proof.
\end{proof}

We can extend \cor{robust-of-SLS-general} to provide an SLS version of \cor{robust-IOP}.

\begin{corollary}
For output-feedback systems $\Gbf(\Deltabf)$ as in \fig{realization-output-feedback} perturbed by structured $\Deltabf$ as in \cor{robust-of-SLS-general}, let $\Kcal_\epsilon$ be the set of robustly stabilizing controllers defined by
\begin{align*}
\Kcal_\epsilon = \{ \Kbf : \Kbf \text{ internally stabilizes } \Gbf(\Deltabf), \forall \Deltabf \in \Dcal_\epsilon \}
\end{align*}
where
\begin{align*}
\Dcal_\epsilon = \{
\Deltabf \text{ structured as in \cor{robust-of-SLS-general}} :
\norm{\Deltabf}_{\infty} < \epsilon
\}.
\end{align*}
Then $\Kcal_\epsilon$ is parameterized by $\hatOFqple$ that satisfies \cor{of-SLS-zero-D} and
\begin{align*}
\norm{
\mat{
\hat{\Phibf}_{\xx} & \hat{\Phibf}_{\xy}\\
\hat{\Phibf}_{\ux} & \hat{\Phibf}_{\uy}
}
}_{\infty} \leq \epsilon^{-1}.
\end{align*}
\end{corollary}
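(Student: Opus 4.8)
The plan is to reduce the claim to \cor{robust-of-SLS-general} together with the small gain theorem, exactly paralleling the proof of \cor{robust-IOP}. First I would note that $O \in \Dcal_\epsilon$ (the zero perturbation is structured and has norm $0<\epsilon$), so every $\Kbf \in \Kcal_\epsilon$ must in particular internally stabilize the nominal plant $\Gbf$; by \cor{of-SLS-zero-D} (and \cor{of-SLS} for the general-$D$ controller formula) such a controller is the one associated with some $\hatOFqple$ lying in the affine space of \cor{of-SLS-zero-D}, and in particular its nominal stability $\Snm$ is in $\RHinf$. Conversely, every such $\hatOFqple$ already yields a nominally stabilizing controller, so it remains only to decide, among these, which also stabilize $\Gbf(\Deltabf)$ for \emph{all} $\Deltabf \in \Dcal_\epsilon$.

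Next I would invoke \cor{robust-of-SLS-general} for a fixed such $\hatOFqple$: its controller internally stabilizes the perturbed plant $\Gbf(\Deltabf)$ if and only if
\begin{align*}
\Psibf(\Deltabf) = \left( I - \mat{\Deltabf_A & \Deltabf_B\\ \Deltabf_C & \Deltabf_D}\mat{\hat{\Phibf}_{\xx} & \hat{\Phibf}_{\xy}\\ \hat{\Phibf}_{\ux} & \hat{\Phibf}_{\uy}} \right)^{-1} \in \RHinf ,
\end{align*}
so that $\Kbf \in \Kcal_\epsilon$ precisely when $\Psibf(\Deltabf) \in \RHinf$ for every $\Deltabf \in \Dcal_\epsilon$. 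Here I would record two elementary facts: deleting the all-zero row/column block of the structured $\Deltabf$ leaves its $H_\infty$ norm unchanged, so $\norm{\Deltabf}_\infty = \norm{\mat{\Deltabf_A & \Deltabf_B\\ \Deltabf_C & \Deltabf_D}}_\infty$, and as $\Deltabf$ ranges over $\Dcal_\epsilon$ the block $\mat{\Deltabf_A & \Deltabf_B\\ \Deltabf_C & \Deltabf_D}$ ranges over all proper transfer matrices of the appropriate dimensions with $H_\infty$ norm $< \epsilon$; moreover a finite $H_\infty$ norm forces $\Deltabf_A,\Deltabf_B,\Deltabf_C,\Deltabf_D \in \RHinf$ (as used in \cor{robust-IOP}), so both factors inside the inverse lie in $\RHinf$.

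The key step is then the small gain theorem applied to $\bigl(I - M_\Delta M_\Phi\bigr)^{-1}$, where $M_\Delta := \mat{\Deltabf_A & \Deltabf_B\\ \Deltabf_C & \Deltabf_D}$ and $M_\Phi := \mat{\hat{\Phibf}_{\xx} & \hat{\Phibf}_{\xy}\\ \hat{\Phibf}_{\ux} & \hat{\Phibf}_{\uy}} \in \RHinf$: this inverse lies in $\RHinf$ for every $M_\Delta \in \RHinf$ with $\norm{M_\Delta}_\infty < \epsilon$ if and only if $\norm{M_\Phi}_\infty \le \epsilon^{-1}$. For the ``if'' direction one has $\norm{M_\Delta M_\Phi}_\infty \le \norm{M_\Delta}_\infty \norm{M_\Phi}_\infty < 1$ and small gain applies; for the ``only if'' direction, if $\norm{M_\Phi}_\infty > \epsilon^{-1}$ one constructs a full-block $M_\Delta$ with $\norm{M_\Delta}_\infty < \epsilon$ that makes $I - M_\Delta M_\Phi$ singular at some frequency. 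Combining this with the previous paragraph identifies $\Kcal_\epsilon$ with the image, under the controller map, of exactly those $\hatOFqple$ satisfying \cor{of-SLS-zero-D} and $\norm{M_\Phi}_\infty \le \epsilon^{-1}$, which is the assertion.

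I expect the ``only if'' half of the small gain step to be the main obstacle: the destabilizing perturbation one exhibits must be \emph{admissible}, i.e.\ of the block form $\mat{\Deltabf_A & \Deltabf_B\\ \Deltabf_C & \Deltabf_D}$ with each block in $\RHinf$ and with $H_\infty$ norm strictly below $\epsilon$. This works because the allowed perturbations fill a full block with no extra sparsity pattern, so the standard rank-one worst-case construction is automatically of the required structure; the remaining items — the norm identity for the zero-padded $\Deltabf$, the $\RHinf$ membership of the perturbation blocks, and that of $\Snm$ and $M_\Phi$ (already granted by \cor{of-SLS-zero-D}) — are routine bookkeeping.
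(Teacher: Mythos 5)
Your proposal is correct and follows essentially the same route as the paper: reduce to Corollary~\ref{cor:robust-of-SLS-general} and then apply the small gain theorem to $\bigl(I - M_\Delta M_\Phi\bigr)^{-1}$ over the full norm ball of structured perturbations. The paper's own proof is just a terser version of this, leaving implicit the bookkeeping you spell out (the norm identity for the zero-padded $\Deltabf$, the $\RHinf$ membership of the blocks, and the necessity direction of small gain via a worst-case admissible perturbation).
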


\begin{proof}
Similar to the proof of \cor{robust-IOP}, we know $\Deltabf \in \RHinf$ as its norm is bounded. By the small gain theorem and \cor{robust-of-SLS-general}, $\Psibf \in \RHinf$ for all $\Deltabf \in \Dcal_\epsilon$ if and only if
\begin{align*}
\norm{
\mat{
\hat{\Phibf}_{\xx} & \hat{\Phibf}_{\xy}\\
\hat{\Phibf}_{\ux} & \hat{\Phibf}_{\uy}
}
}_{\infty} \leq \epsilon^{-1},
\end{align*}
which concludes the proof.
\end{proof}

Similar to \cor{robust-sf-SLS}, \ref{cor:robust-of-SLS-controller-condition}, and \ref{cor:robust-of-SLS-general}, we can derive the following condition for the nominal IOP controller that still stabilizes a perturbed plant using \lem{perturbed-S}. The proof is mostly the same as the proof of \cor{robust-IOP} and omitted.

\begin{corollary}
For the realization in \fig{realization-G-K}, let $\{\hat{\Ybf}, \hat{\Wbf}, \hat{\Ubf}, \hat{\Zbf} \}$ satisfy \cor{IOP}. Consider an additive perturbation $\DeltaG \in \RHinf$ on the plant such that $\Gbf(\Deltabf_{\Gbf}) = \Gbf + \DeltaG$. Then the nominal controller
\begin{align*}
\hat{\Kbf} = \hat{\Ubf} \hat{\Ybf}^{-1}
\end{align*}
internally stabilizes the perturbed system if and only if
\begin{align*}
(I - \DeltaG \hat{\Ubf})^{-1} \in \RHinf.
\end{align*}
\end{corollary}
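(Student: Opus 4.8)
The plan is to run exactly the argument behind \cor{robust-IOP}, specialized to a single fixed perturbation $\DeltaG$ so that the small-gain step is no longer needed.

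First I would write the perturbed realization for the loop in \fig{realization-G-K}: with $\etabf = \mat{\ybf\\ \ubf}$ and plant $\Gbf + \DeltaG$ it is $\Rtru = \mat{O & \Gbf + \DeltaG \\ \hat{\Kbf} & O} = \Rnm + \Deltabf$, where $\Rnm = \mat{O & \Gbf \\ \hat{\Kbf} & O}$ is the nominal realization and $\Deltabf = \mat{O & \DeltaG \\ O & O}$ is an additive perturbation in the sense of \eqn{perturbed-R}. The nominal stability matrix is $\Snm = \mat{\hat{\Ybf} & \hat{\Wbf} \\ \hat{\Ubf} & \hat{\Zbf}} \in \RHinf$ because $\{\hat{\Ybf},\hat{\Wbf},\hat{\Ubf},\hat{\Zbf}\}$ satisfies \cor{IOP}; that corollary also gives $\hat{\Kbf} = \hat{\Ubf}\hat{\Ybf}^{-1} \in \Rp$, and since $\Gbf \in \Rsp$ and $\DeltaG \in \RHinf$ we still have $\Gbf + \DeltaG \in \Rp$. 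Hence the off-diagonal blocks of $\Rtru$ are proper, so by condition \eqn{R-S-condition} internal stability of the perturbed closed loop under $\hat{\Kbf}$ is equivalent to $\Stru \in \RHinf$.

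Next I would apply \lem{perturbed-S} to get $\Stru = \Snm(I - \Deltabf\Snm)^{-1}$ and compute $I - \Deltabf\Snm = \mat{I - \DeltaG\hat{\Ubf} & -\DeltaG\hat{\Zbf} \\ O & I}$, which is block upper-triangular with inverse $\mat{(I - \DeltaG\hat{\Ubf})^{-1} & (I - \DeltaG\hat{\Ubf})^{-1}\DeltaG\hat{\Zbf} \\ O & I}$ (its invertibility is exactly well-posedness of the perturbed interconnection). Since $\DeltaG, \hat{\Zbf}, \Snm \in \RHinf$, this yields $\Stru \in \RHinf$ if and only if $(I - \DeltaG\hat{\Ubf})^{-1} \in \RHinf$; combined with the reduction of the previous paragraph, this is the claimed equivalence.

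The only delicate point is a bookkeeping one rather than a mathematical one: checking that the properness half of \eqn{R-S-condition} survives the plant perturbation, so that internal stability genuinely collapses to the single condition $\Stru \in \RHinf$. Once that is settled, the corollary is a two-line consequence of \lem{perturbed-S} and the block-triangular inverse, which is precisely why the statement is described as having ``mostly the same'' proof as \cor{robust-IOP}.
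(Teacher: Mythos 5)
Your proposal is correct and follows exactly the route the paper intends: the paper omits this proof, noting only that it is ``mostly the same as the proof of Corollary~\ref{cor:robust-IOP},'' and your argument is precisely that proof specialized to a single fixed $\Deltabf_{\Gbf}$ with the small-gain step dropped --- same additive perturbation $\Deltabf = \bigl[\begin{smallmatrix} O & \Deltabf_{\Gbf} \\ O & O\end{smallmatrix}\bigr]$, same use of Lemma~\ref{lem:perturbed-S}, same block-triangular inverse. Your extra check that properness of the off-diagonal blocks of $\Rbf(\Deltabf)$ survives the perturbation is a reasonable bit of added care, not a deviation.
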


\subsection{Discussion}\label{sec:discussion}
There are still some robust results beyond the scope of the conditions from ditions from \lem{R-S} and \lem{perturbed-S}. In particular, the conditions rely on some other procedure to ensure the perturbed stability matrix is still in $\RHinf$ for the whole uncertainty set $\Dcal$, or condition \eqn{robust-S-general}.
Different $\Dcal$ may invoke different theorems. For example, while the small gain theorem works for ball-like uncertainty set, there is a line of research on bounded perturbation of transfer function coefficients, which builds upon the Kharitonov's Theorem \cite{kharitonov1978asymptotic}.
Kharitonov's Theorem suggests that robust stability over the whole uncertainty set $\Dcal$ can be achieved by the stability of $4$ elements within $\Dcal$. It is leveraged by \cite{bernstein1990robust,bernstein1992robust,shafai1992robust} to synthesize robust controllers and further generalized by \cite{chapellat1989generalization,barmish1988generalization} for larger classes of coefficient-perturbed transfer functions.

In contrast to the realization-centric perspective adopted in this work, there are also alternative approaches for system analysis using the integral quadratic constraints \cite{megretski1997system} or interval analysis \cite{patre2010robust}. Those methods also derive robust results, while we argue that the realization-centric perspective is more straightforward and the proofs are much simpler.

There is still a plenty of robust results that could be derived or generalized from \lem{R-S} and \lem{perturbed-S} but not included here, such as \cite[Proposition 1]{chen2019robust}, \cite[Lemma 2]{tsiamis2020sample}, \cite[Theorem III.5]{matni2020robust}. We encourage the reader to explore those diverse results and unify them under this paper's approach.

\section{Conclusion}\label{sec:conclusion}

In this paper, we derived the realization-stability lemma and the corresponding robust stability conditions. The realization-stability lemma is built upon our realization-centric abstraction. Unlike traditional approaches that differentiate the plant from the controller, realization abstraction examines the system as a whole, which leads to the concept of equivalent systems and provides a unified approach to synthesize and analyze controllers.

Via the realization-stability lemma, not only did we formulate the general controller synthesis problem and its robust version, but we also showed that all existing controller synthesis, realization, and some robust results are all special cases of the lemma merely concerning different realizations/system structures. In addition, we leveraged the lemmas to derive new robust results and discussed some other robust results beyond our current analysis approach.

Through these case studies, we demonstrate a unified procedure/analysis approach based on the realization-stability lemma to perform controller synthesis and realization derivation. Meanwhile, unintentionally but perhaps usefully, the paper serves as a comprehensive survey of contemporary (robust) controller synthesis results.

\bibliographystyle{IEEEtran}
\bibliography{Test}

\begin{IEEEbiography}[
{\includegraphics[width=1in,height=1.25in,clip,keepaspectratio]{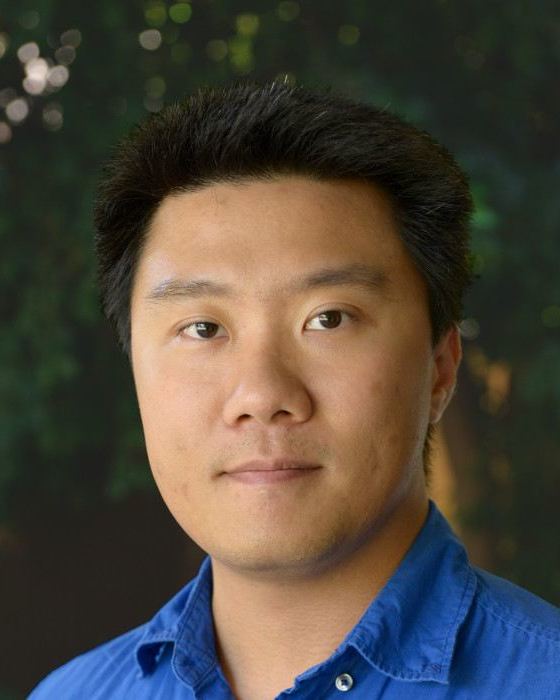}}
]{Shih-Hao Tseng}
received a Ph.D. in electrical and computer engineering from Cornell University in 2018 and worked as a postdoctoral scholar research associate at California Institute of Technology until 2021. His research interests include networked system, control theory, network optimization, and performance evaluation.
\end{IEEEbiography}

\end{document}